\numberwithin{equation}{section}
\newtheorem{thm}{Theorem}[section]
\newtheorem{cor}[thm]{Corollary}
\newtheorem{lem}[thm]{Lemma}
\newtheorem{prop}[thm]{Proposition}
\theoremstyle{definition}
\newtheorem{df}[thm]{Definition}
\newtheorem{rem}[thm]{Remark}
\newtheorem{rems}[thm]{Remarks}
\newtheorem{ass}[thm]{Assumption}
\newcommand{\rank}{\text{rank}}
\newcommand{\field}[1]{\mathbb{#1}}
\newcommand{\Q}{\field{Q}}
\newcommand{\C}{\field{C}}
\newcommand{\R}{\field{R}}
\newcommand{\N}{\field{N}}
\newcommand{\Z}{\field{Z}}
\newcommand{\F}{\field{F}}
\newcommand{\Gm}{\Gamma}
\renewcommand{\L}{\Lambda}
\newcommand{\dc}{{\bf C}_{\infty}}
\newcommand{\cp}{{\bf C}_{\mathfrak p}}
\newcommand{\Div}{\mathcal Div}
\newcommand{\cale}{\mathcal E}
\newcommand{\ad}{{\bf A}}
\newcommand{\idl}{{\bf I}}
\newcommand{\liminv}{\displaystyle \lim_{\leftarrow}}
\newcommand{\limdir}{\displaystyle \lim_{\rightarrow}}
\newcommand{\ol}{\mathcal O}
\newcommand{\cala}{\mathcal A}
\newcommand{\calm}{\mathcal M}
\newcommand{\calk}{\mathcal K}
\newcommand{\call}{\mathcal L}
\newcommand{\cals}{\mathcal S}
\newcommand{\calf}{\mathcal F}
\newcommand{\calx}{\mathcal X}
\newcommand{\caln}{\mathcal N}
\newcommand{\calc}{\mathcal C}
\newcommand{\calu}{\mathcal U}
\newcommand{\pr}{\mathfrak p}
\newcommand{\gotn}{\mathfrak n}
\newcommand{\sri}{\twoheadrightarrow}
\newcommand{\iri}{\hookrightarrow}
\newcommand{\ri}{\rightarrow}
\newcommand{\ov}{\overline}
\newcommand{\wt}{\widetilde}
\newcommand{\wh}{\widehat}
\newcommand{\dl}[1]{\lim_{\buildrel \longrightarrow\over{#1}}}
\newcommand{\il}[1]{\lim_{\buildrel \longleftarrow\over{#1}}}
\newfont{\cyr}{wncyr10 scaled 1000}
\newcommand{\dlog}{{\rm dlog\,}}
\newcommand{\End}{\operatorname{End}}
\title[Iwasawa theory over function fields]{Aspects of Iwasawa theory over function fields}
\author[A.~Bandini, F.~Bars, I.~Longhi]{Andrea Bandini, Francesc Bars*\protect\footnote{* Supported by MTM2013-40680-P} ,
 Ignazio Longhi **\protect\footnote{** Supported by NSC 099-2811-M-002-096}}
\begin{document}

\maketitle

\begin{abstract}
We consider $\Z_p^{\mathbb{N}}$-extensions $\calf$ of a global function field $F$ and study various aspects of Iwasawa theory with emphasis
on the two main themes already (and still) developed in the number fields case as well. When dealing with the Selmer group of an abelian
variety $A$ defined over $F$, we provide all the ingredients to formulate an Iwasawa Main Conjecture relating the Fitting ideal and the
$p$-adic $L$-function associated to $A$ and $\calf$. We do the same, with characteristic ideals and $p$-adic $L$-functions, in the case
of class groups (using known results on characteristic ideals and Stickelberger elements for $\Z_p^d$-extensions). The final section
provides more details for the cyclotomic $\Z_p^{\mathbb{N}}$-extension arising from the torsion of the Carlitz module: in particular,
we relate cyclotomic units with Bernoulli-Carlitz numbers by a Coates-Wiles homomorphism.
\end{abstract}

{\small \noindent{\bf 2010 MSB:} Primary: 11R23. Secondary: 11R58,
11G05, 11G10, 14G10, 11R60.

\noindent{\bf Keywords:} Iwasawa Main Conjecture, global function
fields, $L$-functions, Selmer groups, class groups,
Bernoulli-Carlitz numbers}

\section{Introduction}

The main theme of number theory (and, in particular, of arithmetic geometry) is probably the study of representations of the Galois
group $Gal(\overline\Q/\Q)$ - or, more generally, of the absolute Galois group $G_F:=Gal(F^{sep}/F)$ of some global field $F$. A basic
philosophy (basically, part of the yoga of motives) is that any object of arithmetic interest is associated with a $p$-adic realization,
which is a $p$-adic representation $\rho$ of $G_F$ with precise concrete properties (and to any $p$-adic representation with such properties
should correspond an arithmetic object). Moreover from this $p$-adic representation one defines the $L$-function associated to the arithmetic
object. Notice that the image of $\rho$ is isomorphic to a compact subgroup of $GL_n(\Z_p)$ for some $n$, hence it is a $p$-adic Lie group and
the representation factors through $Gal(\calf'/F)$, where $\mathcal F'$ contains subextensions $\mathcal F$ and $F'$ such that $\mathcal F/F'$
is a pro-$p$ extension and $F'/F$ and $\calf'/\calf$ are finite.\bigskip

Iwasawa theory offers an effective way of dealing with various issues arising in this context, such as the variation of arithmetic structures
in $p$-adic towers, and is one of the main tools currently available for the knowledge (and interpretation) of zeta values associated to an
arithmetic object when $F$ is a number field \cite{katoicm}. This theory constructs some sort of elements, called $p$-adic $L$-functions,
which provide a good understanding of both the zeta values and the arithmetic properties of the arithmetic object. In particular, the various
forms of Iwasawa Main Conjecture provide a link between the zeta side and the arithmetic side.

The prototype is given by the study of class groups in the cyclotomic extensions $\Q(\zeta_{p^n})/\Q$. In this case the arithmetic side
corresponds to a torsion $\L$-module $X$, where $\L$ is an Iwasawa algebra related to $Gal(\Q(\zeta_{p^\infty})/\Q)$ and $X$ measures the
$p$-part of a limit of $cl(\Q(\zeta_{p^n}))$. As for the zeta side, it is represented by a $p$-adic version of the Riemann zeta function,
that is, an element $\xi\in\L$ interpolating the zeta values. One finds that $\xi$ generates the characteristic ideal of $X$.

For another example of Iwasawa Main Conjecture, take $E$ an elliptic curve over $\Q$ and $p$ a prime of good ordinary reduction (in terms
of arithmetic objects, here we deal with the Chow motive $h^1(E)$, as before with $h^0(\Q)$). Then on the arithmetic side the torsion Iwasawa
module $X$ corresponds to the Pontrjagin dual of the Selmer group associated to $E$ and the $p$-adic $L$-function of interest here is an
element $L_p(E)$ in an Iwasawa algebra $\L$ (that now is $\Z_p[[Gal(\Q(\zeta_{p^\infty})/\Q)]]$) which interpolates twists of the $L$-function
of $E$ by Dirichlet characters of $(\Z/p^n)^*$. As before, conjecturally $L_p(E)$ should be the generator of the characteristic ideal of $X$.

In both these cases, we had $F=\Q$ and $\calf'=\Q(\zeta_{p^\infty})$. Of course there is no need for such a limitation and one can take as
$\calf'$ any $p$-adic extension of the global field $F$: for example one can deal with $\Z_p^{n}$-extensions of $F$. A more recent creation
is non-commutative Iwasawa theory, which allows to deal with non-commutative $p$-adic Lie group, as the ones appearing from non-CM elliptic
curves (in particular, this may include the extensions where the $p$-adic realization of the arithmetic object factorizes).\bigskip

In most of these developments, the global field $F$ was assumed to be a number field. The well-known analogy with function fields suggests
that one should have an interesting Iwasawa theory also in the characteristic $p$ side of arithmetic. So in the rest of this paper $F$ will
be a global function field, with $char(F)=p$ and constant field $\F_F$. Observe that there is a rich and well-developed theory of cyclotomic
extension for such an $F$, arising from Drinfeld modules: for a survey on its analogy with the cyclotomic theory over $\Q$ see \cite{thak}.

We shall limit our discussion to abelian Galois extension of $F$. One has to notice that already with this assumption, an interesting
new phenomenon appears: there are many more $p$-adic abelian extensions than in the number field case, since local groups of units are
$\Z_p$-modules of infinite rank. So the natural analogue of the $\Z_p$-extension of $\Q$ is the maximal $p$-adic abelian extension
$\calf/F$ unramified outside a fixed place and we have $\Gamma=Gal(\calf/F)\simeq\Z_p^\N$. It follows that the ring $\Z_p[[\Gamma]]$
is not noetherian; consequently, there are some additional difficulties in dealing with $\Lambda$-modules in this case. Our proposal
is to see $\Lambda$ as a limit of noetherian rings and replace characteristic ideals by Fitting ideals when necessary.

As for the motives originating the Iwasawa modules we want to study, we start considering abelian varieties over $F$ and ask the same
questions as in the number field case. Here the theory seems to be rich enough. In particular, various control theorems allow to define the
algebraic side of the Iwasawa Main conjecture. As for the analytic part, we will sketch how a $p$-adic $L$-function can be defined for
modular abelian varieties.

Then we consider the Iwasawa theory of class groups of abelian extensions of $F$. This subject is older and more developed: the Iwasawa
Main Conjecture for $\Z_p^n$-extension was already proved by Crew in the 1980's, by geometric techniques. We concentrate on
$\Z_p^\N$-extensions, because they are the ones arising naturally in the cyclotomic theory; besides they are more naturally related to
characteristic $p$ $L$-functions (a brave new world where zeta values have found another, yet quite mysterious, life). The final section,
which should be taken as a report on work in progress, provides some material for a more cyclotomic approach to the Main Conjecture.

\subsection{Contents of the paper} In section \ref{ConTh} we study the structure of Selmer groups associated with elliptic curves (and, more
in general, with abelian varieties) and $\Z_p^d$-extensions of a global function field $F$. We use the different versions of control theorems
avaliable at present to show that the Pontrjagin duals of such groups are finitely generated (sometimes torsion) modules over the appropriate
Iwasawa algebra. These results allow us to define characteristic and Fitting ideals for those duals. In section \ref{LamMod}, taking the
$\Z_p^d$-extensions as a filtration of a $\Z_p^{\mathbb{N}}$-extension $\calf$, we can use a limit argument to define a (pro-)Fitting ideal
for the Pontrjagin dual of the Selmer group associated with $\calf$. This (pro-)Fitting ideal (or, better, one of its generators) can be
considered as a worthy candidate for an algebraic $L$-function in this setting.
In section \ref{s:modGL2} we deal with the analytic counterpart, giving a brief description of the $p$-adic $L$-functions which have been defined
(by various authors) for abelian varieties and the extensions $\calf/F$. Sections \ref{LamMod} and \ref{s:modGL2} should provide the ingredients
for the formulation of an Iwasawa Main Conjecture in this setting.
In section \ref{s:classgroups} we move to the problem of class groups. We use some techniques of an (almost) unpublished work of Kueh, Lai and Tan
to show that the characteristic ideals of the class groups of $\Z_p^d$-subextensions of a cyclotomic $\Z_p^{\mathbb{N}}$-extension are generated
by some Stickelberger element. Such a result can be extended to the whole $\Z_p^{\mathbb{N}}$-extension via a limit process because, at least
under a certain assumption, the characteristic ideals behave well with respect to the inverse limit (as Stickelberger elements do).
This provides a new approach to the Iwasawa Main Conjecture for class groups. At the end of section \ref{s:classgroups} we briefly recall
some results on what is known about class groups and characteristic $p$ zeta values.
Section \ref{s:carlitz} is perhaps the closest to the spirit of function field arithmetic. For simplicity we deal only with the Carlitz module.
We study the Galois module of cyclotomic units by means of Coleman power series and show how it fits in an Iwasawa Main Conjecture. Finally we
compute the image of cyclotomic units by Coates-Wiles homomorphisms: one gets special values of the Carlitz-Goss zeta function, a result which
might provide some hints towards its interpolation\,\footnote{A different approach using a version of Iwasawa Main Conjecture for the cyclotomic
Carlitz extension and leading to information on special values of the Carlitz-Goss zeta function is carried out in \cite{ABBL}.}.\bigskip

This paper was first written in 2010 so it reflects the situation at the time. We have added a few references to more recent
developments related with the theory presented here but have not attempted to include detailed descriptions of new results.
A recent excellent source for the arithmetic of function fields is the book \cite{BLTBook}, in particular (since here we focus
on Iwasawa theory) we mention the paper \cite{BT} which also covers the non-commutative approach.

\subsection{Some notations} \label{notation1} Given a field $L$, $\overline L$ will denote an algebraic closure and $L^{sep}$ a separable closure;
we shall use the shortening $G_L:=Gal(L^{sep}/L)$. When $L$ is (an algebraic extension of) a global field, $L_v$ will be its completion at the
place $v$, $\ol_v$ the ring of integers of $L_v$ and $\F_v$ the residue field. We are going to deal only with global fields of positive
characteristic: so $\F_L$ shall denote the constant field of $L$.

As mentioned before, let $F$ be a global field of characteristic $p>0$, with field of constants $\F_F$ of cardinality $q$. We also
fix algebraic closures $\overline F$ and $\overline{F_v}$ for any place $v$ of $F$, together with embeddings
$\overline F\hookrightarrow\overline{F_v}$, so to get restriction maps $G_{F_v}\hookrightarrow G_F$. All algebraic extensions of $F$
(resp. $F_v$) will be assumed to be contained in $\overline F$ (resp. $\overline{F_v}$).

Script letters will denote infinite extensions of $F$. In particular, $\calf$ shall always denote a Galois extension of $F$, ramified only at
a finite set of places $S$ and such that $\Gm:=Gal(\calf/H)$ is a free $\Z_p$-module, with $H/F$ a finite subextension (to ease notations,
in some sections we will just put $H=F$); the associated Iwasawa algebra is $\L:=\Z_p[[\Gm]]$. We also put $\tilde\Gm:=Gal(\calf/F)$ and
$\tilde\L:=\Z_p[[\tilde\Gm]]$.

The Pontrjagin dual of an abelian group $A$ shall be denoted as $A^\vee$.

\begin{rem} Class field theory shows that, in contrast with the number field case, in the characteristic $p$ setting $Gal(\calf/F)$ (and hence
$\Gamma$) can be very large indeed. Actually, it is well known that for every place $v$ the group of 1-units $\ol_{v,1}^*\subset F_v^*$ (which
is identified with the inertia subgroup of the maximal abelian extension unramified outside $v$) is isomorphic to a countable product of copies
of $\Z_p$: hence there is no bound on the dimension of $\Gamma$. Furthermore, the only  $\Z_p^{finite}$-extension of $F$ which arises
somewhat naturally is the arithmetic one $\calf^{arit}$, i.e., the compositum of $F$ with the maximal pro-$p$-extension of $\F_F$. This justifies
our choice to concentrate on the case of a $\Gamma$ of infinite rank: $\calf$ shall mostly be the maximal abelian extension unramified outside
$S$ (often imposing some additional condition to make it disjoint from $\calf^{arit}$).

We also recall that a $\Z_p$-extension of $F$ can be ramified at infinitely many places \cite[Remark 4]{GC}: hence our condition on $S$ is a
quite meaningful restriction.
\end{rem}

\section{Control theorems for abelian varieties}\label{ConTh}

\subsection{Selmer groups}\index{Selmer group} \label{ss:selmer}
Let $A/F$ be an abelian variety, let $A[p^n]$ be the group scheme of $p^n$-torsion points and put $A[p^\infty]:=\limdir A[p^n]$. Since we work
in characteristic $p$ we define the Selmer groups via flat cohomology of group schemes. For any finite algebraic extension $L/F$ let $X_L:=Spec\, L$
and for any place $v$ of $L$ let $L_v$ be the completion of $L$ at $v$ and $X_{L_v}:=Spec\, L_v\,$. Consider the local Kummer embedding
\[ \kappa_{L_v}\,:\, A(L_v)\otimes \Q_p/\Z_p \iri \dl{n} H_{fl}^1(X_{L_v},A[p^n])=: H_{fl}^1(X_{L_v},A[p^\infty])\ . \]

\begin{df}\label{SelDef}
The $p$ part of the Selmer group of $A$ over $L$ is defined as
\[ Sel_A(L)_p:=Ker\left\{ H^1_{fl}(X_L,A[p^\infty])\ri \prod_v H^1_{fl}(X_{L_v},A[p^\infty])/Im\,\kappa_{L_v}\right\} \]
where the map is the product of the natural restrictions at all primes $v$ of $L$. For an infinite algebraic extension
$\call/F$ we define, as usual, the Selmer group $Sel_A(\call)_p$ via the direct limit of the $Sel_A(L)_p$ for all
the finite subextensions $L$ of $\call$.
\end{df}

In this section we let $\calf_d/F$ be a $\Z_p^d$-extension ($d<\infty$) with Galois group $\Gm_d$ and associated Iwasawa algebra $\L_d\,$.
Our goal is to describe the structure of $Sel_A(\calf_d)_p$ (actually of its Pontrjagin dual) as a $\L_d$-module. The main step is a control
theorem proved in \cite{abl} for the case of elliptic curves and in \cite{tan} in general, which will enable us to prove that
$\cals(\calf_d):=Sel_A(\calf_d)_p^\vee$ is a finitely generated (in some cases torsion) $\L_d$-module. The proof of the control theorem
requires semi-stable reduction for $A$ at the places which ramify in $\calf_d/F$: this is not a restrictive hypothesis thanks to the
following (see \cite[Lemma 2.1]{OT})

\begin{lem}\label{OcTr}
Let $F'/F$ be a finite Galois extension. Let $\calf'_d:=\calf_d F' $ and $\L'_d:=\Z_p[[Gal(\calf'_d/F')]]$. Put $A'$ for the base change of
$A$ to $F'$. If $\cals':=Sel_{A'}(\calf'_d)_p^\vee$ is a finitely generated (torsion) $\L'_d$-module, then $\cals$ is a finitely generated
(torsion) $\L_d$-module.
\end{lem}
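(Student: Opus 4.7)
The plan is to show that $\cals$ differs from the $G$-coinvariants of $\cals'$ only by finite groups, where $G := Gal(\calf'_d/\calf_d)$, and then to transfer finite generation and torsion from $\L'_d$ to $\L_d$ through the embedding of Iwasawa algebras coming from restriction of Galois groups. Concretely, restriction identifies $Gal(\calf'_d/F')$ with the open finite-index subgroup $H := Gal(\calf_d/\calf_d\cap F')$ of $\Gm_d$, so $\L'_d\cong\Z_p[[H]]$ embeds as a subring of $\L_d$ over which $\L_d$ is free of rank $[\Gm_d:H]$. It is then elementary that a module carrying compatible structures over both rings is finitely generated (resp. torsion) over $\L_d$ if and only if it is so over $\L'_d$; for the torsion equivalence one uses that $\mathrm{Frac}(\L_d)$ is a finite extension of $\mathrm{Frac}(\L'_d)$.

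Next I would relate the two Selmer groups. After base change one has $Sel_{A'}(\calf'_d)_p = Sel_A(\calf'_d)_p$, and $G$ is normal in $Gal(\calf'_d/F)$ since $\calf_d/F$ is Galois. Applying inflation-restriction to the global term $H^1_{fl}(X_{\calf_d}, A[p^\infty])$ and to each of the local quotients appearing in Definition \ref{SelDef}, one produces a $\Gm_d$-equivariant map
$$Sel_A(\calf_d)_p \longrightarrow Sel_A(\calf'_d)_p^G$$
whose kernel and cokernel are controlled by the groups $H^i(G_w, A[p^\infty](\calf'_{d,w}))$ for $i=1,2$, with $w$ ranging over the (finitely many) places of $\calf'_d$ above $S$ and $G_w\subset G$ the corresponding decomposition subgroup. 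Since each $G_w$ is finite and $A[p^\infty](\calf'_d)$ is cofinitely generated over $\Z_p$ (being a $p$-primary subgroup of $A(\ov F)_{\rm tors}$, which has finite $\Z_p$-corank in characteristic $p$), these cohomology groups are finite, and so are the kernel and cokernel of the displayed map.

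Dualizing, the normality of $G$ endows $\cals'_G$ with a canonical $\L_d$-module structure via $Gal(\calf'_d/F)/G=\Gm_d$, which extends the quotient $\L'_d$-structure along $\L'_d\hookrightarrow\L_d$. Pontrjagin duality converts the previous map into a $\L_d$-equivariant homomorphism $\cals'_G\to\cals$ with finite kernel and cokernel. Since $\cals'$ is finitely generated (resp. torsion) over $\L'_d$, so is its quotient $\cals'_G$, hence also over $\L_d$ by the first step; the finiteness of the kernel and cokernel of $\cals'_G\to\cals$ then pushes the property to $\cals$. I expect the main technical obstacle to lie in the local part of the inflation-restriction step: one must verify that under restriction the Kummer image is preserved up to finite error at each place above $S$, but this again reduces to the finiteness of cohomology of finite groups with cofinitely generated $\Z_p$-coefficients.
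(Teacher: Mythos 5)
Your proposal is correct and reaches the same conclusion, but it is more elaborate than the paper's argument, and the difference is worth noting. You build a comparison map $Sel_A(\calf_d)_p\to Sel_A(\calf'_d)_p^G$ and analyse both its kernel and cokernel, which forces you (as you yourself anticipate) to carry out an inflation-restriction argument not just on the global cohomology but also on the local quotients defining the Selmer condition at each place above $S$. The paper sidesteps all of the local analysis with a lighter observation: both $\cals$ and $\cals'$ are quotients of the Pontrjagin duals of the full $H^1_{fl}$ groups, and the dual of the global inflation-restriction sequence for $Gal(\calf'_d/\calf_d)$ shows that the map $H^1_{fl}(X_{\calf'_d},A[p^\infty])^\vee\to H^1_{fl}(X_{\calf_d},A[p^\infty])^\vee$ has cokernel a quotient of the finite group $H^1(Gal(\calf'_d/\calf_d),A[p^\infty](\calf'_d))^\vee$. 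Passing to the right-hand column of the commutative square one gets at once that $\cals/Im\,\cals'$ is finite, and this alone suffices: if $\cals'$ is finitely generated (torsion) then so is its image, and a module with a finitely generated (torsion) submodule of finite index is again finitely generated (torsion). In particular the paper never needs to control the kernel of the map $\cals'\to\cals$, nor to examine the local cohomology at places of $S$. Your final transfer step via the identification of $\L'_d$ with $\Z_p[[H]]$ for $H=Gal(\calf_d/\calf_d\cap F')$ open in $\Gm_d$, with $\L_d$ finite free over it, coincides with the paper's closing remark that $Gal(\calf'_d/F')$ is open in $\Gm_d$. (One small imprecision on your side: the kernel of your comparison map is controlled by the global term $H^1(G,A[p^\infty](\calf'_d))$ rather than by the local $H^i(G_w,\cdot)$; the local terms enter only in bounding the cokernel. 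This does not affect the conclusion, since all these groups are finite for the same reason.) In short, your route proves a slightly stronger control-type statement and is correct, at the cost of the local bookkeeping that the paper's quotient trick is specifically designed to avoid.
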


\begin{proof} From the natural embeddings $Sel_A(L)_p\iri H^1_{fl}(X_L, A[p^\infty])$ (any $L$) one gets a diagram between duals
\[ \xymatrix{ H^1_{fl}(X_{\calf'_d}, A[p^\infty])^\vee \ar@{->>}[rr] \ar[d] & & \cals' \ar[d]\\
H^1_{fl}(X_{\calf_d}, A[p^\infty])^\vee \ar@{->>}[rr] \ar@{->>}[d] & & \cals \ar@{->>}[d] \\
H^1(Gal(\calf'_d/\calf_d), A[p^\infty](\calf'_d))^\vee \ar@{->>}[rr] & & \cals/Im\,\cals' }\]
(where in the lower left corner one has the dual of a Galois cohomology group and the whole left side comes from the dual of the
Hochschild-Serre spectral sequence). Obviously $\calf'_d/\calf_d$ is finite (since $F'/F$ is) and $A[p^\infty](\calf'_d)$ is cofinitely
generated, hence $H^1(Gal(\calf'_d/\calf_d), A[p^\infty](\calf'_d))^\vee$ and $\cals/Im\,\cals'$ are finite as well. Therefore $\cals$
is a finitely generated (torsion) $\L'_d$-module and the lemma follows from the fact that $Gal(\calf'_d/F')$ is open in $\Gm_d\,$.
\end{proof}

\subsection{Elliptic curves}
Let $E/F$ be an elliptic curve, non-isotrivial (i.e., $j(E)\not\in \F_F\,$) and having good ordinary or split multiplicative reduction
at all the places which ramify in $\calf_d/F$ (assuming there is no ramified prime of supersingular reduction one just needs a finite
extension of $F$ to achieve this). We remark that for such curves $E[p^\infty](F^{sep})$ is finite (it
is an easy consequence of the fact that the $p^n$-torsion points for $n\gg 0$ generate inseparable extensions of $F$, see for example
\cite[Proposition 3.8]{blv}).\\
For any finite subextension $F\subseteq L \subseteq \calf_d$ we put $\Gm_L:= Gal(\calf_d/L)$ and consider the natural restriction map
\[ a_L\,:\,Sel_E(L)_p\longrightarrow Sel_E(\calf_d)_p^{\Gm_L}\ .\]

The following theorem summarizes results of \cite{abl} and \cite{tan}.\index{Control theorem}

\begin{thm}\label{CTEll}
In the above setting assume that $\calf_d/F$ is unramified outside a finite set of places of $F$ and that $E$ has good ordinary or split
multiplicative reduction at all ramified places. Then $Ker\,a_L$ is finite (of order bounded independently of $L$) and $Coker\,a_L$ is a
cofinitely generated $\Z_p$-module (of bounded corank if $d=1$). Moreover if all places of bad reduction for $E$ are unramified in $\calf_d/F$,
then $Coker\,a_L$ is finite as well (of bounded order if $d=1$).
\end{thm}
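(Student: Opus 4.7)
The plan is to apply the snake lemma to the commutative diagram comparing the defining sequence of $Sel_E(L)_p$ (Definition \ref{SelDef}) with the $\Gm_L$-invariants of the corresponding sequence over $\calf_d$:
\[
\xymatrix@C=0.5em@R=1.8em{
0 \ar[r] & Sel_E(L)_p \ar[r] \ar[d]^{a_L} & H^1_{fl}(X_L, E[p^\infty]) \ar[r] \ar[d]^{b_L} & \prod_v H^1_{fl}(X_{L_v}, E[p^\infty])/Im\,\kappa_{L_v} \ar[d]^{c_L} \\
0 \ar[r] & Sel_E(\calf_d)_p^{\Gm_L} \ar[r] & H^1_{fl}(X_{\calf_d}, E[p^\infty])^{\Gm_L} \ar[r] & \Big(\prod_w H^1_{fl}(X_{\calf_{d,w}}, E[p^\infty])/Im\,\kappa_{\calf_{d,w}}\Big)^{\Gm_L}
}
\]
with $c_L = \prod_v c_{L,v}$, the inner product on the bottom right ranging over primes $w$ of $\calf_d$ sitting over $v$. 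The snake lemma yields $Ker\,a_L \iri Ker\,b_L$ and an exact sequence $Ker\,c_L \to Coker\,a_L \to Coker\,b_L$, so the theorem reduces to bounding $Ker\,b_L$, $Coker\,b_L$ and $Ker\,c_L$ uniformly in $L$.

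For the global arrow, Hochschild--Serre in flat cohomology (which is just Galois cohomology for the spectrum of a field) identifies $Ker\,b_L = H^1(\Gm_L, M)$ and embeds $Coker\,b_L$ into $H^2(\Gm_L, M)$, where $M := E[p^\infty](\calf_d)$. Non-isotriviality of $E$ makes $E[p^\infty](F^{sep})$ finite (as recalled just before the theorem), so $M$ is finite and the $\Gm_L$-action on it factors through a fixed finite quotient of $\Gm_d$; both $H^i(\Gm_L, M)$ are then finite with order bounded uniformly in $L$.

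For the local arrows I would argue place by place, noting that only finitely many places contribute (the bad-reduction primes together with the ramified places of $\calf_d/F$). At an unramified prime of good reduction $c_{L,v}$ is injective by the usual unramified-cohomology computation. At a ramified prime of good (hence good ordinary) reduction, one dissects the Kummer image via the connected--\'etale sequence of the $p$-divisible group of the N\'eron model and obtains $Ker\,c_{L,v}$ finite with bounded order. At an unramified prime of bad reduction, a direct check through the component group and the inflation of unramified classes gives $Ker\,c_{L,v}$ finite with bounded order as well. The only source of positive $\Z_p$-corank is a ramified prime of split multiplicative reduction: there the Tate uniformization $E(\calf_{d,w}) \simeq \calf_{d,w}^*/q_E^{\Z}$ translates the computation into Kummer theory for $\G_m$, which yields a $Ker\,c_{L,v}$ that is cofinitely generated over $\Z_p$, of bounded corank when $d=1$ because only finitely many such primes occur in $\calf_d/L$.

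Plugging these estimates into the snake lemma delivers the full statement in one stroke: $Ker\,a_L$ is finite of bounded order; $Coker\,a_L$ is a cofinitely generated $\Z_p$-module, of bounded corank when $d=1$; and if every bad-reduction prime is unramified in $\calf_d/F$ then the multiplicative-ramified case is vacuous, every $Ker\,c_{L,v}$ is finite, and $Coker\,a_L$ becomes finite (of bounded order when $d=1$). The main obstacle is the local analysis at ramified primes of multiplicative reduction, which is precisely where the semi-stability hypothesis is exploited and where, in characteristic $p$, ordinary \'etale constructions fail and one must substitute the universal-norm / flat-cohomology arguments of \cite{abl} and \cite{tan}.
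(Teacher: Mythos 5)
Your proposal follows essentially the same route as the paper's proof: snake lemma, Hochschild--Serre together with finiteness of $E[p^\infty](\calf_d)$ for the middle vertical map, and a place-by-place analysis of the right-hand map with Tate uniformization at ramified split multiplicative primes isolating the only source of positive $\Z_p$-corank. The one small technical point the paper handles explicitly, which your diagram elides, is replacing the full local product in the third column by the image $\mathcal{G}(X_L)$ of the localization map so that the top row is genuinely short exact and the snake lemma applies without further comment.
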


\begin{proof} Let $\calf_w$ be the completion of $\calf_d$ at $w$ and, to shorten notations, let
\[ \mathcal{G}(X_L):=Im\left\{H_{fl}^1(X_L,E[p^\infty])\longrightarrow
\prod_{v}\,H_{fl}^1(X_{L_v},E[p^\infty])/Im\,\kappa_{L_v}\right\} \]
(analogous definition for $\mathcal{G}(X_{\calf_d})\,$).\\
Consider the diagram
\[ \xymatrix{
Sel_E(L)_p \ar[d]^{a_L} \ar@{^{(}->}[r] & H_{fl}^1(X_L,E[p^\infty]) \ar[d]^{b_L} \ar@{->>}[r] &
\mathcal{G}(X_L)\ar[d]^{c_L}\\ Sel_E(\calf_d)_p^{\Gm_L} \ar@{^{(}->}[r] & H_{fl}^1(X_{\calf_d},E[p^\infty])^{\Gm_L} \ar[r] &
\mathcal{G}(X_{\calf_d})^{\Gm_L} \ .} \]
Since $X_{\calf_d}/X_L$ is a Galois covering the Hochschild-Serre spectral sequence (see \cite[III.2.21 a),b) and III.1.17 d)]{Mi1}) yields
\[ Ker\,b_L = H^1(\Gm_L,E[p^\infty](\calf_d)) \quad {\rm and} \quad
Coker\,b_L \subseteq H^2(\Gm_L,E[p^\infty](\calf_d)) \]
(where the $H^i$'s are Galois cohomology groups).
Since $E[p^\infty](\calf_d)$ is finite it is easy to see that
\[ |Ker\,b_L|\leqslant|E[p^\infty](\calf_d)|^d \quad {\rm and} \quad |Coker\,b_L|\leqslant|E[p^\infty](\calf_d)|^{\frac{d(d-1)}{2}} \]
(\cite[Lemma 4.1]{abl}). By the snake lemma the inequality on the left is enough to prove the first statement of the theorem, i.e.,
\[ |Ker\,a_L|\leqslant|Ker\,b_L|\leqslant|E[p^\infty](\calf_d)|^d \]
which is finite and bounded independently of $L$ (actually one can also use the upper bound $|E[p^\infty](F^{sep})|^d$ which makes it
independent of $\calf_d$ as well).

\noindent We are left with $Ker\,c_L\,$: for any place $w$ of $\calf_d$ dividing $v$ define
\[  d_w : H_{fl}^1(X_{L_v},E[p^\infty])/Im\,\kappa_{L_v} \longrightarrow H_{fl}^1(X_{\calf_w},E[p^\infty])/Im\,\kappa_{w}\ . \]
Then
\[ Ker\,c_L \iri \prod_{v} \bigcap_{w|v} Ker\,d_w \]
(note also that $Ker\,d_w$ really depends on $v$ and not on $w$). If $v$ totally splits in $\calf_d/L$ then $d_w$ is obviously an isomorphism.
Therefore from now on we study the $Ker\,d_w$'s only for primes which are not totally split. Moreover, because of the following diagram coming
from the Kummer exact sequence
\[ \xymatrix{
H_{fl}^1(X_{L_v},E[p^\infty])/Im\,\kappa_{L_v}\ar@{^{(}->}[r]\ar[d]^{d_w} & H_{fl}^1(X_{L_v},E)\ar[d]^{h_w} \\
H_{fl}^1(X_{\calf_w},E[p^\infty])/Im\,\kappa_{\calf_w}\ar@{^{(}->}[r] & H_{fl}^1(X_{\calf_w},E) \ , } \]
one has an injection
\[ Ker\,d_w \iri Ker\,h_w\simeq H^1(\Gm_{L_v},E(\calf_w)) \]
which allows us to focus on $H^1(\Gm_{L_v},E(\calf_w))$.

\subsubsection{Places of good reduction}\label{GoodRedEll}
If $v$ is unramified let $L_v^{unr}$ be the maximal unramified extension of $L_v\,$. Then using the inflation map and
\cite[Proposition I.3.8]{Mi2} one has
\[ H^1(\Gm_{L_v},E(\calf_w)) \iri H^1(Gal(L_v^{unr}/L_v),E(L_v^{unr}))=0\ .\]
Let $\wh{E}$ be the formal group associated with $E$ and, for any place $v$, let $E_v$ be the reduced curve. From the exact sequence
\[ \wh{E}(\ol_{\ov{L}_v})\iri E(\ov{L}_v)\sri E_v(\ov{\F}_v) \]
and the surjectivity of $E(L_v)\sri E_v(\F_v)$ (see \cite[Exercise I.4.13]{Mi1}), one gets
\[ H^1(\Gm_{L_v},\wh{E}(\ol_w))\iri H^1(\Gm_{L_v},E(\calf_w))\ri H^1(\Gm_{L_v},E_v(\F_w)) \ .\]
Using the Tate local duality (see \cite[Theorem III.7.8 and Appendix C]{Mi2}) and a careful study of the $p^n$-torsion points in inseparable
extensions of $L_v\,$, Tan proves that $H^1(\Gm_{L_v},\wh{E}(\ol_w))$ is isomorphic to the Pontrjagin dual of $E_v[p^\infty](\F_v)$
(see \cite[Theorem 2]{tan}). Hence
\[ |H^1(\Gm_{L_v},E(\calf_w))|\leqslant|H^1(\Gm_{L_v},E_v(\F_w))|\,|E_v[p^\infty](\F_v)| \ .\]
Finally let $L'_v$ be the maximal unramified extension of $L_v$ contained in $\calf_w$ (so that, in particular, $\F_w=\F_{L'_v}\,$)
and let $\Gm_{L'_v}:=Gal(\calf_w/L'_v)$ be the inertia subgroup of $\Gm_{L_v}$. The Hochschild-Serre sequence reads as
\begin{align}
H^1(\Gm_{L_v}/\Gm_{L'_v},E_v(\F_{L'_v})) \iri & H^1(\Gm_{L_v},E_v(\F_w))   \nonumber\\
 &\qquad\qquad \downarrow  \nonumber\\
 &  H^1(\Gm_{L'_v},E_v(\F_w))^{\Gm_{L_v}/\Gm_{L'_v}} \ri  H^2(\Gm_{L_v}/\Gm_{L'_v},E_v(\F_{L'_v}))\ .\nonumber \end{align}
Now $\Gm_{L_v}/\Gm_{L'_v}$ can be trivial, finite cyclic or $\Z_p$ and in any case Lang's Theorem yields
\[ H^i(\Gm_{L_v}/\Gm_{L'_v},E_v(\F_{L'_v}))=0\quad i=1,\ 2\ .\]
Therefore
\[ H^1(\Gm_{L_v},E_v(\F_w)) \simeq H^1(\Gm_{L'_v},E_v(\F_{L'_v}))^{Gal(L'_v/L_v)} \simeq H^1(\Gm_{L'_v},E_v(\F_v)) \]
and eventually
\begin{align} |H^1(\Gm_{L_v},E(\calf_w))| & \leqslant|H^1(\Gm_{L'_v},E_v(\F_v))|\,|E_v[p^\infty](\F_v)| \nonumber\\
 & \leqslant|E_v[p^\infty](\F_v)|^{d(L'_v)+1}\leqslant|E_v[p^\infty](\F_v)|^{d+1} \nonumber \end{align}
where $d(L'_v):=rank_{\Z_p}\Gm_{L'_v}\leqslant d$ and the middle bound (independent from $\calf_d\,$) comes from \cite[Lemma 4.1]{abl}.

We are left with the finitely many primes of bad reduction.

\subsubsection{Places of bad reduction}\label{BadRedEll}
By our hypothesis at these primes we have the Tate curve exact sequence
\[ q_{E,v}^\Z\iri \calf_w^*\sri E(\calf_w) \ .\]
For any subfield $K$ of $\calf_w/L_v$ one has a Galois equivariant isomorphism
\[ K^*/\ol_K^*q_{E,v}^\Z \simeq T_K  \]
(coming from $E_0(\calf_w)\iri E(\calf_w) \sri T_{\calf_w}\,$), where $T_K$ is a finite cyclic group of order $-ord_K(j(E))$ arising from
the group of connected components (see, for example, \cite[Lemma 4.9 and Remark 4.10]{abl}). Therefore
\[ H^1(\Gm_{L_v},E(\calf_w)) = \dl{K} H^1(\Gm_K ,E(K)) \iri \dl{K} H^1(\Gm_K ,T_K) \simeq \dl{K} (T_K)_p^{d(K)} \ , \]
where $(T_K)_p$ is the $p$-part of $T_K$ and $d(K)=\rank_{\Z_p} \Gm_K\,$.

If $v$ is unramified then all $T_K$'s are isomorphic to $T_{L_v}$ and $d(K)=d(L_v)=1$; hence
\[ |H^1(\Gm_{L_v} ,E(\calf_w))|=|(T_{L_v})_p|= |(T_{F_\nu})_p| \ ,\]
where $\nu$ is the prime of $F$ lying below $v$ (note that the bound is again independent of $\calf_d\,$).

If $v$ is ramified then taking Galois cohomology in the Tate curve exact sequence, one finds
\[ Ker\,h_w=H^1(\Gm_{L_v},E(\calf_w))\iri H^2(\Gm_{L_v},q_{E,v}^\Z) \]
where the injectivity comes from Hilbert 90.\\
Since $\Gm_{L_v}$ acts trivially on $q_{E,v}^\Z$, one finds that
\[ Ker\,h_w\iri H^2(\Gm_{L_v},q_{E,v}^\Z)\simeq H^2(\Gm_{L_v},\Z)\simeq (\Gm_{L_v}^{ab})^\vee\simeq(\Q_p/\Z_p)^{d(L_v)}\iri (\Q_p/\Z_p)^d\ , \]
i.e., $Ker\,h_w$ is a cofinitely generated $\Z_p$-module.

This completes the proof for the general case. If all ramified primes are of good reduction, then the $Ker\,d_w$'s are finite so
$Coker\,a_L$ is finite as well. In particular its order is bounded by
\[\prod_{v\ ram,\ good} |E_v[p^\infty](\F_{L_v})|^{d+1}\times \prod_{v\ inert,\ bad} p^{ord_p(ord_v(j(E)))} \times |Coker\,b_L|\ .\]
If $d=1$ the last term is trivial and, in a $\Z_p$-extension, the (finitely many) places which are ramified or inert of bad reduction
admit only a finite number of places above them. If $d\geqslant 2$ such bound is not independent of $L$ because the number of terms
in the products is unbounded. In the case of ramified primes of bad reduction the bound for the corank is similar.
\end{proof}

Both $Sel_E(\calf_d)_p$ and its Pontrjagin dual are modules over the ring $\L_d$ in a natural way. An easy consequence of the previous theorem
and of Nakayama's Lemma (see \cite{BH}) is the following (see for example \cite[Corollaries 4.8 and 4.13]{abl})

\begin{cor}\label{CTCorEll}
In the setting of the previous theorem, let $\cals(\calf_d)$ be the Pontrjagin dual of $Sel_E(\calf_d)_p\,$.
Then $\cals(\calf_d)$ is a finitely generated $\L_d$-module.
Moreover if all ramified primes are of good reduction and $Sel_E(F)_p$ is finite, then $\cals(\calf_d)$ is $\L_d$-torsion.
\end{cor}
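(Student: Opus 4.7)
The plan is to apply Nakayama's Lemma for the compact noetherian local ring $\L_d$, using Theorem \ref{CTEll} with $L = F$ to bound the quotient of $\cals(\calf_d)$ by the maximal ideal $\mathfrak{m} = I + (p)$ of $\L_d$, where $I$ denotes the augmentation ideal.

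First I would dualize the four-term exact sequence
$$0 \to \ker a_F \to Sel_E(F)_p \to Sel_E(\calf_d)_p^{\Gm_d} \to \coker a_F \to 0$$
to obtain
$$0 \to (\coker a_F)^\vee \to \cals(\calf_d)_{\Gm_d} \to Sel_E(F)_p^\vee \to (\ker a_F)^\vee \to 0,$$
identifying $\cals(\calf_d)_{\Gm_d} = \cals(\calf_d)/I\cals(\calf_d)$. Theorem \ref{CTEll} makes $(\coker a_F)^\vee$ a finitely generated $\Z_p$-module and $(\ker a_F)^\vee$ finite; combined with the standard fact that $Sel_E(F)_p^\vee$ is finitely generated over $\Z_p$, this shows $\cals(\calf_d)/I\cals(\calf_d)$ is finitely generated over $\Z_p$, and hence $\cals(\calf_d)/\mathfrak{m}\cals(\calf_d)$ is finite. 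Nakayama's Lemma for compact $\L_d$-modules (cf.\ \cite{BH}) then delivers the first assertion.

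For the torsion statement, assume all ramified places have good reduction and $Sel_E(F)_p$ is finite. Under these hypotheses Theorem \ref{CTEll} gives $\coker a_F$ finite as well, so every term in the dualized sequence above is finite and $\cals(\calf_d)_{\Gm_d}$ is finite. I would then argue by the structure theory for finitely generated modules over $\L_d \cong \Z_p[[T_1,\ldots,T_d]]$: were $\cals(\calf_d)$ of positive $\L_d$-rank, it would contain, modulo a pseudo-null submodule, a nonzero free $\L_d$-summand, whose $\Gm_d$-coinvariants are $\Z_p$, infinite --- contradicting the finiteness of $\cals(\calf_d)_{\Gm_d}$ just established.

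The main obstacle is this last torsion step: finite generation reduces cleanly to Nakayama once the control theorem is in hand, but deducing vanishing of the $\L_d$-rank from the finiteness of coinvariants depends on the structure theory of $\L_d$-modules up to pseudo-isomorphism; this is classical for $d = 1$ but requires more care for $d \geq 2$. One further point to watch is that the argument uses $Sel_E(F)_p^\vee$ being finitely generated over $\Z_p$, a standard descent fact that should be cited rather than proved afresh.
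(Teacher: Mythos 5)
Your Nakayama argument for finite generation is exactly the route the paper has in mind (it explicitly cites Nakayama's lemma and \cite{BH}): dualizing the control sequence at $L=F$ shows $\cals(\calf_d)/I\cals(\calf_d)$ is a finitely generated $\Z_p$-module, hence $\cals(\calf_d)/\mathfrak{m}\cals(\calf_d)$ is finite, and Nakayama for compact $\L_d$-modules does the rest. That part is fine.

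The torsion step has a genuine gap for $d\geq 2$, and it is larger than your closing caveat suggests. You propose to pass the pseudo-isomorphism $\cals(\calf_d)\to\L_d^r\oplus E$ through $\Gm_d$-coinvariants and read off $\Z_p^r$. For $d=1$ the kernel and cokernel of the pseudo-isomorphism are finite and this works, but for $d\geq 2$ pseudo-null $\L_d$-modules can have \emph{infinite} coinvariants: the module $\Z_p=\L_d/I$ has annihilator $I$ of height $d\geq 2$, so it is pseudo-null, yet $(\Z_p)_{\Gm_d}=\Z_p$. So the error terms in the coinvariant sequence need not be finite, and no contradiction is reached; the route you sketch does not close. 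A correct argument for all $d$ goes through supports rather than the structure theorem: if $\cals(\calf_d)$ were not $\L_d$-torsion then, $\L_d$ being a domain, $\mathrm{Ann}_{\L_d}\cals(\calf_d)=0$ and hence $\mathrm{Supp}\,\cals(\calf_d)=\mathrm{Spec}\,\L_d$; therefore $\mathrm{Supp}\big(\cals(\calf_d)/I\cals(\calf_d)\big)=\mathrm{Supp}\,\cals(\calf_d)\cap V(I)=V(I)\cong\mathrm{Spec}\,\Z_p$, which is one-dimensional, so $\cals(\calf_d)_{\Gm_d}$ cannot be finite --- contradicting what the control theorem gives under the good-reduction and $Sel_E(F)_p$-finite hypotheses. (Equivalently one can phrase this via a Koszul/Euler-characteristic rank computation, but the support argument is the cleanest way to avoid the pseudo-null pitfall.)

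Also, when you invoke that $Sel_E(F)_p^\vee$ is finitely generated over $\Z_p$, this is not an extra citation you need: under the stated reduction hypotheses the cited control theorem together with the Mordell--Weil and Tate--Shafarevich finiteness inputs is what the paper's reference chain supplies, and in the finite-$Sel_E(F)_p$ case of the second assertion it is part of the hypothesis.
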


\begin{rems}
\ \begin{itemize}
\item[{\bf 1.}] We recall that, thanks to Lemma \ref{OcTr}, the last corollary holds when there are no ramified primes of supersingular
reduction for $E$ (when such a prime is present the finitely generated statement does not hold anymore, see \cite[Theorem 3.10]{tan2}).
\item[{\bf 2.}] The ramified primes of split multiplicative reduction are the only obstacle to the finiteness of $Coker\,a_L$ and this
somehow reflects the number field situation as described in \cite[section II.6]{MTT}, where the authors defined an extended Mordell-Weil
group whose rank is $rank\,E(F)+N$ (where $N$ is the number of primes of split multiplicative reduction and dividing $p$, i.e., totally
ramified in the cyclotomic $\Z_p$-extension they work with) to deal with the phenomenon of exceptional zeroes.
\item[{\bf 3.}] A different way of having finite kernels and cokernels (and then, at least in some cases, torsion modules $\cals(\calf_d)\,$)
consists in a modified version of the Selmer groups. Examples with trivial or no conditions at all at the ramified primes of bad reduction
are described in \cite[Theorem 4.12]{abl}.
\item[{\bf 4.}] The available constructions of a $p$-adic $L$-function associated to $\Z_p^\N$-extensions require the presence of a totally
ramified prime $\pr$ of split multiplicative reduction for $E$. Thus the theorem applies to that setting but, unfortunately, it only provides
finitely generated $\L_d$-modules $\cals(\calf_d)$.
\item[{\bf 5.}] The paper \cite{LLTT1} describes an example of an elliptic curve $E/F$ and a $\Z_p$-extension $\calf_1$ such
that $\cals(\calf_1)$ is a non-torsion $\L_1$-module (the last section of \cite{LLTT1} verifies the vanishing of the $p$-adic
$L$-function attached to these $E$ and $\calf$, in accordance with the Iwasawa Main Conjecture).
\end{itemize}
\end{rems}

\subsection{Higher dimensional abelian varieties}
We go back to the general case of an abelian variety $A/F$.
For any finite subextension $L/F$ of $\calf_d$ we put $\Gm_L:= Gal(\calf_d/L)$ and consider the natural restriction map
\[ a_L\,:\,Sel_A(L)_p\longrightarrow Sel_A(\calf_d)_p^{\Gm_L}\ .\]

The following theorem summarizes results of \cite{tan}.\index{Control theorem}

\begin{thm}\label{CTAbVar}
In the above setting assume that $\calf_d/F$ is unramified outside a finite set of places of $F$ and that $A$ has good ordinary or
split multiplicative reduction at all ramified places. Then $Ker\,a_L$ is finite (of bounded order if $d=1$) and $Coker\,a_L$ is a
cofinitely generated $\Z_p$-module. Moreover if all places of bad reduction for $A$ are unramified in $\calf_d/F$, then $Coker\,a_L$ is
finite as well (of bounded order if $d=1$).
\end{thm}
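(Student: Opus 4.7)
The plan is to mimic step by step the argument used for Theorem \ref{CTEll}, with $E$ replaced by $A$, and to identify at each stage what extra input is needed in the higher-dimensional setting. First I would set up the same commutative square
\[ \xymatrix{
Sel_A(L)_p \ar[d]^{a_L} \ar@{^{(}->}[r] & H_{fl}^1(X_L,A[p^\infty]) \ar[d]^{b_L} \ar@{->>}[r] &
\mathcal{G}(X_L)\ar[d]^{c_L}\\ Sel_A(\calf_d)_p^{\Gm_L} \ar@{^{(}->}[r] & H_{fl}^1(X_{\calf_d},A[p^\infty])^{\Gm_L} \ar[r] &
\mathcal{G}(X_{\calf_d})^{\Gm_L} } \]
and apply the snake lemma, so that the theorem reduces to bounding $Ker\,b_L$ and $Ker\,c_L$. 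For $b_L$, Hochschild-Serre identifies its kernel and cokernel with $H^1$ and $H^2$ of $\Gm_L$ acting on $A[p^\infty](\calf_d)$. The key input (supplied by \cite{tan}, via the same inseparability argument that gives finiteness of $E[p^\infty](F^{sep})$ in the non-isotrivial elliptic case) is that $A[p^\infty](\calf_d)$ is finite; granted this, the cohomological bounds $|E[p^\infty](\calf_d)|^d$ and $|E[p^\infty](\calf_d)|^{d(d-1)/2}$ from \cite[Lemma 4.1]{abl} go through verbatim with $A$ in place of $E$, yielding the finiteness of $Ker\,a_L$ and the $\Z_p$-cofinitely generated nature of part of $Coker\,a_L$.

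Next I would handle $Ker\,c_L$ place by place, via the injection into $\prod_{v}\bigcap_{w\mid v}Ker\,d_w$ and the factorization through $H^1(\Gm_{L_v},A(\calf_w))$ coming from the Kummer sequence. Split places contribute nothing, so only non-split $v$ matter. At an unramified $v$ of good reduction, the vanishing $H^1(Gal(L_v^{unr}/L_v),A(L_v^{unr}))=0$ (Lang's theorem plus Hensel for the formal group $\wh A$) shows $H^1(\Gm_{L_v},A(\calf_w))=0$ as before. At a ramified $v$ of good ordinary reduction, the sequence $\wh A(\ol_w)\iri A(\calf_w)\sri A_v(\F_w)$ combined with Tate local duality gives, exactly as in \cite[Theorem 2]{tan}, that $H^1(\Gm_{L_v},\wh A(\ol_w))$ is Pontrjagin dual to $A_v[p^\infty](\F_v)$; Lang's theorem then kills the inertia-quotient piece and one obtains a bound $|A_v[p^\infty](\F_v)|^{d+1}$ independent of the particular $L$ inside $\calf_d$.

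At a ramified place $v$ of split multiplicative reduction the Tate curve uniformization is replaced by Mumford/Raynaud rigid uniformization: over $L_v$ one has an exact sequence $0\to M\to T\to A\to 0$ with $T$ a split torus of dimension $\dim A$ and $M\simeq\Z^{\dim A}$ a Galois-stable lattice. Taking $\Gm_{L_v}$-cohomology and using Hilbert 90 for $T$ gives an injection $H^1(\Gm_{L_v},A(\calf_w))\iri H^2(\Gm_{L_v},M)$, and since $\Gm_{L_v}$ acts trivially on $M$ (this is what "split" buys us), the right side is bounded by $H^2(\Gm_{L_v},\Z)^{\dim A}\simeq(\Q_p/\Z_p)^{d(L_v)\dim A}\iri(\Q_p/\Z_p)^{d\dim A}$, which is $\Z_p$-cofinitely generated. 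In the unramified-bad case one recovers finiteness exactly as for the Tate curve using the analogue of the component group $T_K$. Finally I would assemble these bounds: if no ramified place is of bad reduction, every local contribution is finite and hence $Coker\,a_L$ is finite; the refined bound for $d=1$ follows because in a $\Z_p$-extension only finitely many places lie above each place of $F$. The main technical hurdle I anticipate is the bad reduction step: one must check carefully that rigid uniformization is Galois-equivariant over $\calf_w$ and that "split multiplicative" suffices to make the lattice $M$ act on trivially under $\Gm_{L_v}$, which is the precise analogue of the condition used in the elliptic case and is the content of the corresponding arguments in \cite{tan}.
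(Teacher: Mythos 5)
Your overall outline correctly mirrors the structure of the paper's argument (the $(a_L,b_L,c_L)$ diagram, Hochschild--Serre for $b_L$, case-by-case analysis of $Ker\,d_w$, rigid uniformization at ramified bad primes), but there is a genuine gap at the very first step: you assert that $A[p^\infty](\calf_d)$ is finite, ``supplied by \cite{tan}, via the same inseparability argument that gives finiteness of $E[p^\infty](F^{sep})$ in the non-isotrivial elliptic case''. This claim is unjustified and in general false. Theorem \ref{CTAbVar} does not impose any non-isotriviality hypothesis on $A$, and for a general abelian variety the module $A[p^\infty](\calf_d)$ need not be finite -- it is only cofinitely generated over $\Z_p$. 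The inseparability argument used in the Mumford-parametrization step only controls torsion coming from the multiplicative part at ramified bad places; it does not bound the \'etale (ordinary) torsion or torsion over constant-field extensions. The paper handles this by a different device: it takes $L_0=L(A[p^\infty](\calf_d))$, observes $L_0/L$ is everywhere unramified so that $Gal(L_0/L)\simeq\Delta\times\Z_p^e$ with $\Delta$ finite and $e\in\{0,1\}$, chooses a topological generator $\gamma$ of the $\Z_p^e$-part with fixed field $L_1$, and then applies \cite[Lemma 3.4]{abl2} with $b=\max\bigl(|A[p^\infty](L_1)|,\;|A[p^\infty](\calf_d)/(A[p^\infty](\calf_d))_{div}|\bigr)$, obtaining $|Ker\,b_L|\le b^d$ and $|Coker\,b_L|\le b^{d(d-1)/2}$ without ever needing $A[p^\infty](\calf_d)$ itself to be finite. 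This is also why the conclusion for $Ker\,a_L$ in Theorem \ref{CTAbVar} is weaker than in Theorem \ref{CTEll}: the order is claimed bounded only when $d=1$ (via \cite[Corollary 3.2.4]{tan}), whereas your sketch would seem to give a uniform bound -- a sign that the finiteness input you are relying on is not available.

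One smaller inaccuracy: in the ramified good-ordinary case the paper identifies $H^1(\Gm_{L_v},\wh A(\ol_{\calf_w}))$ with $B_v[p^\infty](\F_{L_v})$ where $B$ is the \emph{dual} abelian variety, not with the Pontrjagin dual of $A_v[p^\infty](\F_v)$; one then uses that $A_v$ and $B_v$ are isogenous over $\F_v$ to conclude the two groups have the same order. For elliptic curves self-duality makes the distinction invisible, which is presumably why you carried it over unchanged, but in the higher-dimensional case it should be stated. Your treatment of the ramified bad-reduction place via Mumford/Raynaud uniformization and Hilbert 90 for the split torus is essentially the paper's argument and is correct; the paper phrases it via a period lattice $\Omega_v\subset L_v^{\dim A}$ and $H^2(\Gm_{L_v},\Z)^{\dim A}$, which is the same computation you describe.
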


\begin{proof} We use the same notations and diagrams as in Theorem \ref{CTEll}, substituting the abelian variety $A$ for the elliptic curve $E$.\\
The Hochschild-Serre spectral sequence yields
\[ Ker\,b_L = H^1(\Gm_L,A[p^\infty](\calf_d)) \quad {\rm and} \quad
Coker\,b_L \subseteq H^2(\Gm_L,A[p^\infty](\calf_d)) \ .\]

Let $L_0\subseteq \calf_d$ be the extension generated by $A[p^\infty](\calf_d)$. The extension $L_0/L$ is everywhere unramified (for the places
of good reduction see \cite[Lemma 2.5.1 (b)]{tan}, for the other places note that the $p^n$-torsion points come from the $p^n$-th roots of the
periods provided by the Mumford parametrization so they generate an inseparable extension while $\calf_d/F$ is separable): hence
$Gal(L_0/L)\simeq \Delta \times \Z_p^e$ where $\Delta$ is finite and $e=0$ or 1. Let $\gamma$ be a topological generator of $\Z_p^e$ in
$Gal(L_0/L)$ (if $e=0$ then $\gamma=1$) and let $L_1$ be its fixed field. Then $A[p^\infty](\calf_d)^{\overline{<\gamma >}}=A[p^\infty](L_1)$
is finite and we can apply \cite[Lemma 3.4]{abl2} (with $b$ the maximum between $|A[p^\infty](L_1)|$ and
$|A[p^\infty](\calf_d)/(A[p^\infty](\calf_d))_{div}|$) to get
\[ |Ker\,b_L|\leqslant b^d \quad {\rm and} \quad |Coker\,b_L|\leqslant b^{\frac{d(d-1)}{2}}\ . \]
By the snake lemma the inequality on the left is enough to prove that $Ker\,a_L$ is finite (for the bounded order in the case $d=1$
see \cite[Corollary 3.2.4]{tan}).

\noindent The bounds for the $Ker\,d_w$'s are a direct generalization of the ones provided for the case of the elliptic curve so we give
just a few details. Recall the embedding
\[ Ker\,d_w \iri Ker\,h_w\simeq H^1(\Gm_{L_v},E(\calf_w)) \ .\]

\subsubsection{Places of good reduction} If $v$ is unramified then
\[ H^1(\Gm_{L_v},A(\calf_w)) \iri H^1(Gal(L_v^{unr}/L_v),A(L_v^{unr}))=0\ .\]

If $v$ is ramified one has an exact sequence (as above)
\[ H^1(\Gm_{L_v},\wh{A}(\ol_{\calf_w}))\iri H^1(\Gm_{L_v},A(\calf_w))\ri H^1(\Gm_{L_v},A_v(\F_{\calf_w})) \ .\]
By \cite[Theorem 2]{tan}
\[ H^1(\Gm_{L_v},\wh{A}(\ol_{\calf_w}))\simeq B_v[p^\infty](\F_{L_v}) \]
(where $B$ is the dual variety of $A$) and the last group has the same order of $A_v[p^\infty](\F_{L_v})$. Using Lang's theorem as in
\ref{GoodRedEll}, one finds
\[ |H^1(\Gm_{L_v},A(\calf_w))|\leqslant|A_v[p^\infty](\F_{L_v})|^{d+1} \ .\]

\subsubsection{Places of bad reduction} If $v$ is unramified let $\pi_{0,v}(A)$ be the group of connected components of the N\'eron model of
$A$ at $v$. Then, again by \cite[Proposition I.3.8]{Mi2},
\[ H^1(\Gm_{L_v},A(\calf_w)) \iri H^1(Gal(L_v^{unr}/L_v),A(L_v^{unr}))\simeq H^1(Gal(L_v^{unr}/L_v),\pi_{0,v}(A)) \]
and the last group has order bounded by $|\pi_{0,v}(A)^{Gal(L_v^{unr}/L_v)}|$.

If $v$ is ramified one just uses Mumford's parametrization with a period lattice $\Omega_v\subset L_v\times\dots\times L_v$ (genus $A$ times)
to prove that $H^1(\Gm_{L_v},A(\calf_w))$ is cofinitely generated as in \ref{BadRedEll}.
\end{proof}

We end this section with the analogue of Corollary \ref{CTCorEll}.

\begin{cor}
In the setting of the previous theorem, let $\cals(\calf_d)$ be the Pontrjagin dual of $Sel_A(\calf_d)_p\,$.
Then $\cals(\calf_d)$ is a finitely generated $\L_d$-module.
Moreover if all ramified primes are of good reduction and $Sel_A(F)_p$ is finite, then $\cals(\calf_d)$ is $\L_d$-torsion.
\end{cor}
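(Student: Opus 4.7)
The plan is to mimic exactly the strategy used for Corollary \ref{CTCorEll} in the elliptic curve case: feed the control theorem (Theorem \ref{CTAbVar}) into a topological Nakayama argument, and then upgrade to torsion under the stronger hypotheses.

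First I would apply Theorem \ref{CTAbVar} at the base of the tower, taking $L=F$ so that $\Gm_L=\Gm_d$. This yields an exact sequence
$$0\to Ker\,a_F\to Sel_A(F)_p\to Sel_A(\calf_d)_p^{\Gm_d}\to Coker\,a_F\to 0$$
in which $Ker\,a_F$ is finite and $Coker\,a_F$ is cofinitely generated over $\Z_p$. Since $Sel_A(F)_p$ is itself cofinitely generated over $\Z_p$ (a standard consequence of the Lang--N\'eron theorem combined with the Kummer sequence for $A/F$), the central term $Sel_A(\calf_d)_p^{\Gm_d}$ inherits this property. Dualising, and using that Pontrjagin duality interchanges $\Gm_d$-invariants of a discrete module with $I_d$-coinvariants of its compact dual (where $I_d\subset\L_d$ is the augmentation ideal), I would conclude that $\cals(\calf_d)/I_d\,\cals(\calf_d)$ is finitely generated over $\Z_p$. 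Since $\L_d\simeq\Z_p[[T_1,\dots,T_d]]$ is a compact Noetherian local ring and $\cals(\calf_d)$ is a compact $\L_d$-module, the topological Nakayama lemma (see \cite{BH}) then lifts this to finite generation of $\cals(\calf_d)$ over $\L_d$.

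For the torsion statement under the additional hypotheses, the control theorem now yields $Coker\,a_F$ finite as well, while $Ker\,a_F$ is already finite; combined with the assumed finiteness of $Sel_A(F)_p$, the exact sequence above makes $Sel_A(\calf_d)_p^{\Gm_d}$ finite, and hence $\cals(\calf_d)/I_d\,\cals(\calf_d)$ finite. Invoking the standard Iwasawa-theoretic fact that a finitely generated $\L_d$-module with finite $I_d$-coinvariants is $\L_d$-torsion (a non-zero generic rank $r$ would contribute an infinite $\Z_p^r$ summand to the coinvariants, via the pseudo-isomorphism to the standard form) then concludes the argument. I do not anticipate any substantial obstacle: every step is a direct transplant of the elliptic curve argument in Corollary \ref{CTCorEll}, and the two ingredients specific to the higher-dimensional setting---Theorem \ref{CTAbVar} and the standard $\Z_p$-cofiniteness of $Sel_A(F)_p$---are already in hand.
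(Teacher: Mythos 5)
Your outline is correct and matches what the paper has in mind for this corollary (the paper itself supplies no explicit proof, only the pointer to Nakayama's lemma in Corollary~\ref{CTCorEll} and the references to \cite{abl}): apply Theorem~\ref{CTAbVar} at $L=F$, pass through the standard cofinite generation of $Sel_A(F)_p$ to get $\cals(\calf_d)/I_d\cals(\calf_d)$ finitely generated over $\Z_p$, and invoke topological Nakayama; then, under the extra hypotheses, show the $I_d$-coinvariants are finite and deduce torsion.

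One step, however, is stated more confidently than it deserves: the parenthetical justification that a nonzero generic rank $r$ would ``contribute an infinite $\Z_p^r$ summand to the coinvariants, via the pseudo-isomorphism to the standard form.'' For $d=1$ pseudo-null means finite, and the coinvariants of the elementary module and of $M$ differ by finite error, so the sketch is fine. For $d\geq 2$ the cokernel of the pseudo-isomorphism $M\to \L_d^r\oplus(\text{torsion})$ is only \emph{pseudo-null}, and pseudo-null modules over $\L_d$ can have $I_d$-coinvariants of positive $\Z_p$-rank (for instance $\L_2/I_2\simeq\Z_p$ is pseudo-null, yet its $I_2$-coinvariants are all of $\Z_p$); moreover coinvariants are only right exact. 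So the pseudo-isomorphism alone does not force the $\Z_p^r$ to survive into $M/I_dM$, and the argument as phrased has a gap for $d\ge 2$. The fact you want is nonetheless true, and a clean proof is: if $M/I_dM$ is finite then $(M/I_dM)\otimes_{\Z_p}\Q_p=M\otimes_{\L_d}\kappa(I_d)=0$, hence $M_{I_d}=0$ by Nakayama applied to the localization $(\L_d)_{I_d}$; but if $M$ had positive rank then $\mathrm{Ann}_{\L_d}(M)=0$ and $\mathrm{Supp}(M)=\mathrm{Spec}(\L_d)\ni I_d$, a contradiction. Replacing the parenthetical by this localization argument (or by the upper semi-continuity of fiber dimension of a coherent module) closes the gap and yields exactly the stated corollary.
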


\begin{rem}
In \cite[Theorem 1.7]{OT}, by means of crystalline and syntomic cohomology, Ochiai and Trihan prove a stronger result. Indeed they can show that
the dual of the Selmer group is always torsion, with no restriction on the abelian variety $A/F$, but only in the case of the arithmetic
extension $\calf^{arit}/F$, which lies outside the scope of the present paper. Moreover in the case of a (not necessarily commutative)
pro-$p$-extension containing $\calf^{arit}\,$, they prove that the dual of the Selmer group is finitely generated (for a precise statement,
see \cite{OT}, in particular Theorem 1.9)\,\footnote{A generalization of the torsion statement for $\Z_p^d$-extensions containing $\calf^{arit}$
can be found in \cite{tan2} and \cite{BBL2} (where one also finds an approach to the results of Section \ref{LamMod} in terms of
characteristic ideals).}.
\end{rem}

\section{$\Lambda$-modules and Fitting ideals}\label{LamMod}
We need a few more notations.\\
For any $\Z_p^d$-extension $\calf_d\,$, let $\Gamma(\calf_d):=Gal(\calf_d/F)$ and $\Lambda(\calf_d):=\Z_p[[\Gamma(\calf_d)]]$
(the Iwasawa algebra) with augmentation ideal $I^{\calf_d}$ (or simply $\Gamma_d\,$, $\Lambda_d$ and $I_d$ if the extension $\calf_d$ is
clearly fixed).\\
For any $d>e$ and any $\Z_p^{d-e}$-extension $\calf_d/\calf_e\,$, we put $\Gamma(\calf_d/\calf_e):=Gal(\calf_d/\calf_e)$,
$\Lambda(\calf_d/\calf_e):=\Z_p[[\Gm(\calf_d/\calf_e)]]$ and $I^{\calf_d}_{\calf_e}$ as the augmentation ideal of $\Lambda(\calf_d/\calf_e)$,
i.e., the kernel of the canonical projection $\pi^{\calf_d}_{\calf_e}:\Lambda(\calf_d)\rightarrow \Lambda(\calf_e)$ (whenever possible all these
will be abbreviated to $\Gamma^d_e\,$, $\Lambda^d_e\,$, $I^d_e$ and $\pi^d_e$ respectively).\\
Recall that $\L(\calf_d)$ is (noncanonically) isomorphic to $\Z_p[[T_1,\dots ,T_d\,]]$. A finitely generated torsion $\L(\calf_d)$-module
is said to be {\em pseudo-null} if its annihilator ideal has height at least 2. If $M$ is a finitely generated torsion $\L(\calf_d)$-module, then
there is a pseudo-isomorphism (i.e., a morphism with pseudo-null kernel and cokernel)
\[ M\sim_{\L(\calf_d)} \bigoplus_{i=1}^n \L(\calf_d)/(g_i^{e_i})  \ ,\]
where the $g_i$'s are irreducible elements of $\L(\calf_d)$ (determined up to an element of $\L(\calf_d)^*\,$)
and $n$ and the $e_i$'s are uniquely determined by $M$ (see e.g. \cite[VII.4.4 Theorem 5]{Bo}).

\begin{df}\label{defCharid}\index{Characteristic ideal}
In the above setting the {\em characteristic ideal} of $M$ is
\[ Ch_{\L(\calf_d)}(M):=\left\{ \begin{array}{ll}
0 & {\rm if\ }M\ {\rm is\ not\ torsion} \\
\displaystyle{\left(\,\prod_{i=1}^n g_i^{e_i}\right) }& {\rm otherwise}
\end{array} \right. \ .\]
\end{df}

Let $Z$ be a finitely generated $\L(\calf_d)$-module and let
\[ \L(\calf_d)^a {\buildrel\varphi\over\longrightarrow} \L(\calf_d)^b \sri Z \]
be a presentation where the map $\varphi$ can be represented by a $b\times a$ matrix $\Phi$ with entries in $\L(\calf_d)\,$.

\begin{df}\label{defFittid}\index{Fitting ideal}
In the above setting the {\em Fitting ideal} of $Z$ is
\[ Fitt_{\L(\calf_d)}(Z):=\left\{ \begin{array}{ll}
0 & {\rm if}\ a<b \\
{\rm the\ ideal\ generated\ by\ all\ the}& \\
{\rm determinants\ of\ the\ } b\times b & {\rm if}\ a\geqslant b \\
{\rm minors\ of\ the\ matrix\ } \Phi &
\end{array} \right. \ .\]
\end{df}

Let $\calf/F$ be a $\Z_p^\N$-extension with Galois group $\Gm$. Our goal is to define an ideal in $\L:=\Z_p[[\Gm]]$ associated with $\cals$,
the Pontrjagin dual of $Sel_A(\calf)_p\,$. For this we consider all the $\Z_p^d$-extensions $\calf_d/F$ ($d\in \N$) contained in $\calf$ (which
we call $\Z_p$-finite extensions). Then $\calf=\cup \calf_d$ and $\L=\liminv \L(\calf_d):=\liminv \Z_p[[Gal(\calf_d/F)]]$. The classical
characteristic ideal does not behave well (in general) with respect to inverse limits (because the inverse limit of pseudo-null modules is
not necessarily pseudo-null). For the Fitting ideal, using the basic properties described in the Appendix of \cite{MW}, we have the following

\begin{lem}\label{PrLimFitt}
Let $\calf_d\subset \calf_e$ be an inclusion of multiple $\Z_p$-extensions, $e > d$. Assume that $A[p^\infty](\calf)=0$ or that
$Fitt_{\L(\calf_d)}(\cals(\calf_d))$ is principal. Then
\[ \pi^{\calf_e}_{\calf_d}(Fitt_{\L(\calf_e)}(\cals(\calf_e)))\subseteq Fitt_{\L(\calf_d)}(\cals(\calf_d))\ . \]
\end{lem}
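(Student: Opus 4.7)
The plan is to combine two ingredients: the base-change compatibility of Fitting ideals under the canonical surjection $\pi^e_d\colon \L(\calf_e)\twoheadrightarrow \L(\calf_d)$, and the control theorem (Theorem \ref{CTAbVar}) relating $\cals(\calf_e)$ and $\cals(\calf_d)$. Since $\L(\calf_d)=\L(\calf_e)/I^e_d$, the standard base-change formula from the appendix of \cite{MW} yields
\[
\pi^e_d\bigl(Fitt_{\L(\calf_e)}(\cals(\calf_e))\bigr)
= Fitt_{\L(\calf_d)}\bigl(\cals(\calf_e)\otimes_{\L(\calf_e)}\L(\calf_d)\bigr)
= Fitt_{\L(\calf_d)}\bigl(\cals(\calf_e)_{\Gm^e_d}\bigr),
\]
so the lemma reduces to the inclusion $Fitt_{\L(\calf_d)}(\cals(\calf_e)_{\Gm^e_d})\subseteq Fitt_{\L(\calf_d)}(\cals(\calf_d))$ inside $\L(\calf_d)$.

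Next, I would apply Theorem \ref{CTAbVar} to $\calf_e/F$ at every finite subextension $L$ of $\calf_d/F$ and pass to the direct limit; the Pontrjagin dual of the resulting restriction map yields a canonical $\L(\calf_d)$-linear map
\[
\alpha\colon \cals(\calf_e)_{\Gm^e_d}\longrightarrow \cals(\calf_d),
\]
whose cokernel is the Pontrjagin dual of $\varinjlim_L Ker\,a_L$ and hence finite. Under the first hypothesis $A[p^\infty](\calf)=0$, every Hochschild--Serre term $Ker\,b_L=H^1(\Gm_L,A[p^\infty](\calf_e))$ already vanishes, so $Ker\,a_L=0$ for each $L$ and $\alpha$ is surjective; monotonicity of Fitting ideals under surjection then gives the required containment at once.

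The case where $Fitt_{\L(\calf_d)}(\cals(\calf_d))$ is principal is the main obstacle, since the cokernel of $\alpha$ need no longer vanish. Setting $N:=Im\,\alpha$ and $P:=\cals(\calf_d)/N$, the surjection $\cals(\calf_e)_{\Gm^e_d}\twoheadrightarrow N$ and monotonicity reduce matters to the inclusion $Fitt_{\L(\calf_d)}(N)\subseteq Fitt_{\L(\calf_d)}(\cals(\calf_d))$. Here I would pass through the characteristic ideal: $P$ is finite, hence pseudo-null over the regular domain $\L(\calf_d)$, so multiplicativity of $Ch$ across $0\to N\to \cals(\calf_d)\to P\to 0$ gives $Ch(N)=Ch(\cals(\calf_d))$. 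In the UFD $\L(\calf_d)$ the Fitting ideal of a torsion module coincides with its characteristic ideal after localization at every height-one prime, so a principal Fitting ideal must already equal the characteristic ideal. The assumption therefore gives
\[
Fitt_{\L(\calf_d)}(\cals(\calf_d))=Ch(\cals(\calf_d))=Ch(N)\supseteq Fitt_{\L(\calf_d)}(N),
\]
where the last inclusion is the general relation $Fitt\subseteq Ch$ for torsion modules. The essential subtlety is that the finite cokernel of $\alpha$ is invisible to the characteristic ideal but not to the Fitting ideal in general, so the principal hypothesis is precisely what lets one switch between the two.
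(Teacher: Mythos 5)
Your proof is correct, and the first hypothesis ($A[p^\infty](\calf)=0$) is handled exactly as in the paper: base-change of Fitting ideals identifies $\pi^e_d(Fitt_{\L_e}(\cals(\calf_e)))$ with $Fitt_{\L_d}(\cals(\calf_e)/I^e_d\cals(\calf_e))$, and vanishing of $Ker\,a^e_d$ makes the dual map onto $\cals(\calf_d)$, so monotonicity finishes. In the principal case, however, you take a genuinely different route. The paper stays entirely within Fitting ideals: from the exact sequence $0\to N\to\cals(\calf_d)\to (Ker\,a^e_d)^\vee\to 0$ it invokes the submultiplicativity $Fitt(N)\cdot Fitt((Ker\,a^e_d)^\vee)\subseteq Fitt(\cals(\calf_d))$, picks two coprime elements $\sigma_1,\sigma_2$ in the Fitting ideal of the finite module $(Ker\,a^e_d)^\vee$ (possible since that Fitting ideal contains a power of the maximal ideal), and concludes by a gcd argument in the UFD $\L_d$ that the generator $\theta_d$ of $Fitt(\cals(\calf_d))$ divides every $\alpha\in Fitt(N)$. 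You instead route through characteristic ideals: since the cokernel $P$ is finite (hence pseudo-null for $d\geq 1$), $Ch(N)=Ch(\cals(\calf_d))$, and the principality hypothesis forces $Fitt(\cals(\calf_d))=Ch(\cals(\calf_d))$ because the two ideals agree at every height-one prime and a principal ideal in a UFD is reflexive; combined with the general inclusion $Fitt\subseteq Ch$ for torsion modules, this yields $Fitt(N)\subseteq Fitt(\cals(\calf_d))$. Both arguments are valid (and both silently assume $d\geq 1$, which is harmless). The paper's is more self-contained, using only properties from the appendix of \cite{MW}; yours is conceptually cleaner in that it makes transparent exactly what the principality hypothesis buys you --- it is the condition under which the Fitting ideal cannot see the pseudo-null error terms --- and it anticipates the characteristic-ideal machinery that the paper develops later for class groups. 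The one point you state without proof, that a principal Fitting ideal of a finitely generated torsion module over $\Z_p[[T_1,\dots,T_d]]$ must equal the characteristic ideal, is correct but deserves a sentence of justification along the lines you sketch (compare localizations at height-one primes, then use that $\L_d$ is a UFD so the quotient of the two generators, being a unit at every such prime, is a global unit).
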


\begin{proof} Consider the natural map $a^e_d : Sel_A(\calf_d)_p\ri Sel_A(\calf_e)_p^{\Gamma^e_d}$ and dualize to get
\[ \cals(\calf_e)/I^e_d\cals(\calf_e) \ri \cals(\calf_d) \sri (Ker\,a^e_d)^\vee \]
where (as in Theorem \ref{CTAbVar})
\[ Ker\,a^e_d \iri H^1(\Gamma^e_d,A[p^\infty](\calf_e)) \]
is finite. If $A[p^\infty](\calf)=0$ then $(Ker\,a^e_d)^\vee=0$ and
\[ \pi^e_d(Fitt_{\L_e}(\cals(\calf_e)))= Fitt_{\L_d}(\cals(\calf_e)/I^e_d\cals(\calf_e))\subseteq
Fitt_{\L_d}(\cals(\calf_d)) \ .\]
If $(Ker\,a^e_d)^\vee\neq 0$ one has
\[ Fitt_{\L_d}(\cals(\calf_e)/I^e_d\cals(\calf_e))Fitt_{\L_d}((Ker\,a^e_d)^\vee) \subseteq
Fitt_{\L_d}(\cals(\calf_d)) \ .\]
The Fitting ideal of a finitely generated torsion module contains a power of its annihilator, so let $\sigma_1\,,\sigma_2$ be two relatively
prime elements of $Fitt_{\L_d}((Ker\,a^e_d)^\vee)$ and $\theta_d$ a generator of $Fitt_{\L_d}(\cals(\calf_d))$. Then $\theta_d$ divides
$\sigma_1\alpha$ and $\sigma_2\alpha$ for any $\alpha\in Fitt_{\L_d}(\cals(\calf_e)/I^e_d\cals(\calf_e))$ (it holds, in the obvious sense,
even for $\theta_d=0$). Hence
\[  \pi^e_d(Fitt_{\L_e}(\cals(\calf_e)))=Fitt_{\L_d}(\cals(\calf_e)/I^e_d\cals(\calf_e)) \subseteq Fitt_{\L_d}(\cals(\calf_d)) \ . \]
\end{proof}

\begin{rem}
In the case $A=E$ an elliptic curve, the hypothesis $E[p^\infty](\calf)=0$ is satisfied if $j(E)\not\in (F^*)^p$, i.e., when the curve is
admissible (in the sense of \cite{blv}); otherwise $j(E)\in (F^*)^{p^n}-(F^*)^{p^{n+1}}$ and one can work over the field $F^{p^n}$. The
other hypothesis is satisfied in general by elementary $\L(\calf_d)$-modules or by modules having a presentation with the same
number of generators and relations.
\end{rem}

Let $\pi_{\calf_d}$ be the canonical projection from $\L$ to $\L(\calf_d)$ with kernel $I_{\calf_d}\,$. Then the previous lemma shows that,
as $\calf_d$ varies, the $(\pi_{\calf_d})^{-1}(Fitt_{\L(\calf_d)}(\cals(\calf_d)))$ form an inverse system of ideals in $\L$.

\begin{df}
Assume that $A[p^\infty](\calf)=0$ or that $Fitt_{\L(\calf_d)}(\cals(\calf_d))$ is principal for any $\calf_d\,$. Define
\[ \wt{Fitt}_\L(\cals(\calf)):=\il{\calf_d} (\pi_{\calf_d})^{-1}(Fitt_{\L(\calf_d)}(\cals(\calf_d))) \]
to be the \emph{pro-Fitting ideal} of $\cals(\calf)$ (the Pontrjagin dual of $Sel_E(\calf)_p\,$).
\end{df}

\begin{prop}\label{AugFitt}
Assume that $A[p^\infty](\calf)=0$ or that $Fitt_{\L(\calf_d)}(\cals(\calf_d))$ is principal for any $\calf_d\,$. If
$corank_{\Z_p}Sel_A(\calf_1)_p^{\Gamma(\calf_1)}\geqslant 1$ for any $\Z_p$-extension $\calf_1/F$ contained in $\calf$, then
$\wt{Fitt}_\L(\cals(\calf))\subset I$ (where $I$ is the augmentation ideal of $\L$).
\end{prop}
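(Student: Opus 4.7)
The plan is to reduce the claim to a single $\Z_p$-extension and then combine base change of Fitting ideals with Pontrjagin duality. By construction $\wt{Fitt}_\L(\cals(\calf))$ is the intersection of the ideals $\pi_{\calf_d}^{-1}(Fitt_{\L(\calf_d)}(\cals(\calf_d)))$, so it is contained in each one. Picking any $\Z_p$-extension $\calf_1/F$ inside $\calf$, the projection $\pi_{\calf_1}:\L\to\L_1$ factors the augmentation $\L\to\Z_p$ through the augmentation $\L_1\to\Z_p$; in particular $\pi_{\calf_1}^{-1}(I_1)=I$. Thus it suffices to establish the level-one inclusion
\[ Fitt_{\L_1}(\cals(\calf_1))\subseteq I_1. \]

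For this I would invoke the standard base-change property of Fitting ideals applied to the augmentation $\varepsilon:\L_1\sri\L_1/I_1=\Z_p$. It yields
\[ \varepsilon\bigl(Fitt_{\L_1}(\cals(\calf_1))\bigr)=Fitt_{\Z_p}\bigl(\cals(\calf_1)/I_1\cals(\calf_1)\bigr), \]
so the desired inclusion is equivalent to the vanishing of the right-hand side.

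The vanishing follows from Pontrjagin duality. Writing $\cals(\calf_1)=Sel_A(\calf_1)_p^\vee$, the module of coinvariants $\cals(\calf_1)/I_1\cals(\calf_1)$ is Pontrjagin dual to the module of invariants $Sel_A(\calf_1)_p^{\Gamma(\calf_1)}$, so its $\Z_p$-rank equals $\corank_{\Z_p}Sel_A(\calf_1)_p^{\Gamma(\calf_1)}\geq 1$. Since $\cals(\calf_1)$ is finitely generated over $\L_1$ by the control theorems of Section \ref{ConTh}, the quotient is a finitely generated $\Z_p$-module. Any finitely generated $\Z_p$-module of positive rank has trivial Fitting ideal over $\Z_p$: in any presentation $\Z_p^a\to\Z_p^b\sri M$ the matrix has rank at most $b-\rank_{\Z_p}(M)<b$, so all $b\times b$ minors vanish, and when $a<b$ the ideal is zero by convention. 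This gives $Fitt_{\Z_p}(\cals(\calf_1)/I_1\cals(\calf_1))=0$ and hence the proposition.

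I expect no significant obstacle: the argument is formal, resting on three standard inputs (finite generation of $\cals(\calf_1)$, base change for Fitting ideals, and Pontrjagin duality converting invariants into coinvariants) together with the hypothesis on positive corank at the $\Z_p$-level. The only bookkeeping to be wary of is the correct identification of Pontrjagin duals of invariants versus coinvariants and the compatibility of the augmentation with the various $\pi_{\calf_d}$.
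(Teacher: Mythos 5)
Your proof is correct and rests on the same core computation as the paper's: the hypothesis on $\corank_{\Z_p}Sel_A(\calf_1)_p^{\Gamma(\calf_1)}$, translated through Pontrjagin duality ($\cals(\calf_1)/I^{\calf_1}\cals(\calf_1)=(Sel_A(\calf_1)_p^{\Gamma(\calf_1)})^\vee$), forces $Fitt_{\Z_p}(\cals(\calf_1)/I^{\calf_1}\cals(\calf_1))=0$, which by base change for Fitting ideals gives $Fitt_{\L(\calf_1)}(\cals(\calf_1))\subset I^{\calf_1}$.

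Where you genuinely streamline is the passage from the level-one inclusion to the conclusion. The paper proceeds to show $Fitt_{\L(\calf_d)}(\cals(\calf_d))\subset I^{\calf_d}$ for \emph{every} $d$, choosing a $\calf_1\subset\calf_d$ and invoking Lemma \ref{PrLimFitt} to push the level-one inclusion up through $\pi^{\calf_d}_{\calf_1}$, and only then takes the intersection. You instead observe that the $\Z_p$-extension $\calf_1$ is already one of the $\calf_d$'s appearing in the definition of $\wt{Fitt}_\L$, so the pro-Fitting ideal is trivially contained in $(\pi_{\calf_1})^{-1}(Fitt_{\L(\calf_1)}(\cals(\calf_1)))$, and that $(\pi_{\calf_1})^{-1}(I^{\calf_1})=I$ because the augmentation of $\L$ factors through $\pi_{\calf_1}$. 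This reaches the conclusion without any further use of Lemma \ref{PrLimFitt}. The trade-off is that the paper's proof also records the slightly stronger information $Fitt_{\L(\calf_d)}(\cals(\calf_d))\subset I^{\calf_d}$ at every level, which your argument does not produce; for the statement as given, though, your shortcut is sufficient and clean.
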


\begin{proof} Recall that $I^{\calf_d}$ is the augmentation ideal of $\L(\calf_d)$, that is, the kernel of
$\pi^{\calf_d}\colon\L(\calf_d)\ri \Z_p\,$.
By hypothesis $Fitt_{\Z_p}((Sel_A(\calf_1)_p^{\Gamma(\calf_1)})^\vee)=0$. Thus, since
$\Z_p=\L(\calf_1)/I^{\calf_1}$ and $(Sel_A(\calf_1)_p^{\Gamma(\calf_1)})^\vee=\cals(\calf_1)/I^{\calf_1}\cals(\calf_1)$,
\[ 0=Fitt_{\Z_p}((Sel_A(\calf_1)_p^{\Gamma(\calf_1)})^\vee)=\pi^{\calf_1}(Fitt_{\L(\calf_1)}(\cals(\calf_1))) \ ,\]
i.e., $Fitt_{\L(\calf_1)}(\cals(\calf_1))\subset Ker\,\pi^{\calf_1}=I^{\calf_1}\,$. \\
For any $\Z_p^d$-extension $\calf_d$ take a $\Z_p$-extension $\calf_1$ contained in $\calf_d\,$. Then, by Lemma \ref{PrLimFitt},
\[ \pi^{\calf_d}_{\calf_1}(Fitt_{\L(\calf_d)}(\cals(\calf_d)))\subseteq Fitt_{\L(\calf_1)}(\cals(\calf_1)) \subset I^{\calf_1} \ .\]
Note that $\pi^{\calf_d}=\pi^{\calf_1}\circ \pi^{\calf_d}_{\calf_1}\,$.
Therefore
\[ Fitt_{\L(\calf_d)}(\cals(\calf_d))\subset I^{\calf_d} \iff \pi^{\calf_d}_{\calf_1}(Fitt_{\L(\calf_d)}(\cals(\calf_d)))\subset I^{\calf_1} \ ,\]
i.e., $Fitt_{\L(\calf_d)}(\cals(\calf_d))\subset I^{\calf_d}$ for any $\Z_p$-finite extension $\calf_d\,$.
Finally
\[ \wt{Fitt}_\L(\cals(\calf)):= \bigcap_{\calf_d} (\pi_{\calf_d})^{-1}(Fitt_{\L(\calf_d)}(\cals(\calf_d))) \subset
\bigcap_{\calf_d} (\pi_{\calf_d})^{-1}(I^{\calf_d}) \subset I \ .\]
\end{proof}

\begin{rem}
From the exact sequence
\[ Ker\,a_{\calf_1} \iri Sel_A(F)_p {\buildrel{\ \ a_{\calf_1}}\over-\!\!\!\!\!\longrightarrow} Sel_A(\calf_1)_p^{\Gamma(\calf_1)}
\sri Coker\,a_{\calf_1} \]
and the fact that $Ker\,a_{\calf_1}$ is finite one immediately finds out that the hypothesis on $corank_{\Z_p}Sel_A(\calf_1)_p^{\Gamma(\calf_1)}$
is satisfied if $rank_\Z A(F)\geqslant 1$ or $corank_{\Z_p}Coker\,a_{\calf_1} \geqslant 1$. As already noted, when there is a totally ramified prime of
split multiplicative reduction, the second option is very likely to happen. In the number field case, when $\calf$ is the cyclotomic
$\Z_p$-extension and, in some cases, $Sel_E(\calf)_p^\vee$ is known to be a torsion module, this is equivalent to saying that $T$ divides a
generator of the characteristic ideal of $Sel_E(\calf)_p^\vee$ (i.e., there is an exceptional zero). Note that all the available constructions
of $p$-adic $L$-function for our setting require a ramified place of split multiplicative reduction and they are all known to belong to $I$.
\end{rem}

\section{Modular abelian varieties of $GL_2$-type} \label{s:modGL2}

The previous sections show how to define the algebraic ($p$-adic) $L$-function associated with $\calf/F$ and an abelian variety $A/F$ under
quite general conditions. On the analytic side there is, of course, the complex Hasse-Weil $L$-function $L(A/F,s)$, so the problem becomes to
relate it to some element in an Iwasawa algebra. In this section we will sketch how this can be done at least in some cases; in order to keep
the paper to a reasonable length, the treatment here will be very brief.\bigskip

\index{Abelian variety of $GL_2$-type} We say that the abelian variety $A/F$ is of $GL_2$-type if there is a number field $K$ such that
$[K:\Q]=\dim A$ and $K$ embeds into $\End_F(A)\otimes\Q$. In particular, this implies that for any $l\neq p$ the Tate module $T_lA$ yields
a representation of $G_F$ in $GL_2(K\otimes\Q_l$). The analogous definition for $A/\Q$ can be found in \cite{ribet}, where it is proved that
Serre's conjecture implies that every simple abelian variety of $GL_2$-type is isogenous to a simple factor of a modular Jacobian. We are going
to see that a similar result holds at least partially in our function field setting.

\subsection{Automorphic forms} Let $\ad_F$ denote the ring of adeles of $F$. By automorphic form for $GL_2$ we shall mean a function
$f\colon GL_2(\ad_F)\longrightarrow\C$ which factors through $GL_2(F)\backslash GL_2(\ad_F)/\calk$, where  $\calk$ is some open compact
subgroup of $GL_2(\ad_F)$; furthermore, $f$ is cuspidal if it satisfies some additional technical condition (essentially, the annihilation
of some Fourier coefficients). A classical procedure associates with such an $f$ a Dirichlet sum $L(f,s)$: see e.g.
\cite[Chapters II and III]{weildir}.

The $\C$-vector spaces of automorphic and cuspidal forms provide representations of $GL_2(\ad_F)$. Besides, they have a natural $\Q$-structure:
in particular, the decomposition of the space of cuspidal forms in irreducible representations of $GL_2(\ad_F)$ holds over $\overline\Q$ (and
hence over any algebraically closed field of characteristic zero); see e.g.\;the discussion in \cite[page 218]{vdprev}. We also recall that
every irreducible automorphic representation $\pi$ of $GL_2(\ad_F)$ is a restricted tensor product $\otimes_v'\pi_v$, $v$ varying over the places
of $F$: the $\pi_v$'s are representations of $GL_2(F_v)$ and they are called local factors of $\pi$.

Let $W_F$ denote the Weil group of $F$: it is the subgroup of $G_F$ consisting of elements whose restriction to $\overline\F_q$ is an integer
power of the Frobenius. By a fundamental result of Jacquet and Langlands \cite[Theorem 12.2]{jaclan}, a two-dimensional representation of
$W_F$ corresponds to a cuspidal representation if the associated $L$-function and its twists by characters of $W_F$ are entire functions bounded
in vertical strips (see also \cite{weildir}).

Let $A/F$ be an abelian variety of $GL_2$-type. Recall that $L(A/F,s)$ is the $L$-function associated with the compatible system of
$l$-adic representations of $G_F$ arising from the Tate modules $T_lA$, as $l$ varies among primes different from $p$. Theorems of Grothendieck
and Deligne show that under certain assumptions $L(A/F,s)$ and all its twists are polynomials in $q^{-s}$ satisfying the conditions of
\cite[Theorem 12.2]{jaclan} (see \cite[\S9]{delcost} for precise statements). In particular all elliptic curves are obviously of $GL_2$-type and
one finds that $L(A/F,s)=L(f,s)$ for some cusp form $f$ when $A$ is a non-isotrivial elliptic curve.

\subsection{Drinfeld modular curves} From now on we fix a place $\infty$.

The main source for this section is Drinfeld's original paper \cite{drinf}. Here we just recall that for any divisor $\gotn$ of $F$ with
support disjoint from $\infty$ there exists a projective curve $M(\gotn)$ (the Drinfeld modular curve) and that these curves form a projective
system. Hence one can consider the Galois representation
$$\underline H:=\limdir H^1_{et}(M(\gotn)\times F^{sep},\overline\Q_l).$$
Besides, the moduli interpretation of the curves $M(\gotn)$ allows to define an action of $GL_2(\ad_f)$ on $\underline H$ (where $\ad_f$ denotes
the adeles of $F$ without the component at $\infty$). Let $\Pi_\infty$ be the set of those cuspidal representations having the special
representation of $GL_2(F_\infty)$ (i.e., the Steinberg representation) as local factor at $\infty$. \index{Drinfeld's reciprocity law}
Drinfeld's reciprocity law \cite[Theorem 2]{drinf} (which realizes part of the Langlands correspondence for $GL_2$ over $F$) attaches to
any $\pi\in\Pi_\infty$ a compatible system of two-dimensional Galois representations $\sigma(\pi)_l\colon G_F\longrightarrow GL_2(\overline\Q_l)$
by establishing an isomorphism of $GL_2(\ad_f)\times G_F$-modules
\begin{equation} \label{e:drinfrec} \underline H\simeq \bigoplus_{\pi\in\Pi_{\infty}}(\otimes_{v\neq\infty}'\pi_v)\otimes
\sigma(\pi)_l\ . \end{equation}
As $\sigma(\pi)_l$ one obtains all $l$-adic representations of $G_F$ satisfying certain properties: for a precise list
(and a thorough introduction to all this subject) see \cite{vdprev}. Here we just remark the following requirement:
the restriction of  $\sigma(\pi)_l$ to $G_{F_{\infty}}$ has to be the special $l$-adic Galois representation $sp_{\infty}$. For example,
the representation originated from the Tate module $T_lE$ of an elliptic curve $E/F$ satisfies this condition if and only if $E$ has split
multiplicative reduction at $\infty$.

The Galois representations appearing in \eqref{e:drinfrec} are exactly those arising from the Tate module of the Jacobian of some
$M(\gotn)$. \index{Modular abelian varieties} We call modular those abelian varieties isogenous to some factor of $Jac(M(\gotn))$. Hence we see
that a necessary condition for abelian varieties of $GL_2$-type to be modular is that their reduction at $\infty$ is a split torus.\bigskip

The paper \cite{gekrev} provides a careful construction of Jacobians of Drinfeld modular curves by means of rigid analytic geometry.

\subsection{The $p$-adic $L$-functions} For any ring $R$ let $Meas(\field{P}^1(F_v),R)$ denote the $R$-valued measures on the topological
space $\field{P}^1(F_v)$ (that is, finitely additive functions on compact open subsets of $\field{P}^1(F_v)$) and $Meas_0(\field{P}^1(F_v),R)$
the subset of measures of total mass 0. A key ingredient in the proof of \eqref{e:drinfrec} is the identification of the space of $R$-valued
cusp forms with direct sums of certain subspaces of $Meas_0(\field{P}^1(F_\infty),R)$ (for more precise statements, see \cite[\S2]{vdprev} and
\cite[\S4]{gekrev}). Therefore we can associate with any modular abelian variety $A$ some measure $\mu_A$ on $\field{P}^1(F_\infty)$; this fact
can be exploited to construct elements (our $p$-adic $L$-functions) in Iwasawa algebras in the following way.

Let $K$ be a quadratic algebra over $F$: an embedding of $K$ into the $F$-algebra of $2\times2$ matrices $M_2(F)$ gives rise to an action of the
group $G:=(K\otimes F_\infty)^*/F_\infty^*$ on the $PGL_2(F_\infty)$-homogeneous space $\field{P}^1(F_\infty)$. Class field theory permits to
relate $G$ to a subgroup $\Gamma$ of $\tilde\Gm=Gal(\calf/F)$, where $\calf$ is a certain extension of $F$ (depending on $K$) ramified only
above $\infty$. Then the pull-back of $\mu_A$ to $G$ yields a measure on $\Gm$; this is enough because $Meas(\Gm,R)$ is canonically identified
with $R\otimes\L$ (and $Meas_0(\Gm,R)$ with the augmentation ideal). Various instances of the construction just sketched are explained in \cite{l}
for the case when $A$ is an elliptic curve: here one can take $R=\Z$. Similar ideas were used in P\'al's thesis \cite{pal}, where there is also
an interpolation formula relating the element in $\Z[[\Gamma]]$ so obtained to special values of the complex $L$-function. One should also
mention \cite{pal2} for another construction of $p$-adic $L$-function, providing an interpolation property for one of the cases studied in
\cite{l}. Notice that in all this cases the $p$-adic $L$-function is, more or less tautologically, in the augmentation ideal.

A different approach had been previously suggested by Tan \cite{tanmod}: starting with cuspidal automorphic forms, he defines elements in
certain group algebras and proves an interpolation formula \cite[Proposition \,2]{tanmod}. Furthermore, if the cusp form is ``well-behaved''
his modular elements form a projective system and originate an element in an Iwasawa algebra of the kind considered in the present paper:
in particular, this holds for non-isotrivial elliptic curves having split multiplicative reduction. In the case of an elliptic curve over
$\F_q(T)$ Teitelbaum \cite{teit} re-expressed Tan's work in terms of modular symbols (along the lines of \cite{MTT}); in \cite{hl} it is shown
how this last method can be related to the ``quadratic algebra'' techniques sketched above.

A unified treatment of all of this will appear in \cite{abl3}.\bigskip

Thus for a modular abelian variety $A/F$ we can define both a Fitting ideal and a $p$-adic $L$-function: it is natural to expect that an
Iwasawa Main Conjecture should hold, i.e., that the Fitting ideal should be generated by the $p$-adic $L$-function.

\begin{rem} In the cases considered in this paper (involving a modular abelian variety and a geometric extension of the function field)
the Iwasawa Main Conjecture is still wide open. However, recently there has been some interesting progress in two related settings.

First, one can take $A$ to be an isotrivial abelian variety (notice that \cite[page 308]{tanmod} defines modular elements also for an
isotrivial elliptic curve). Thanks to an observation of Ki-Seng Tan, the Main Conjecture in this setting can be reduced to the one
for class groups, which is already known to hold (as it will be explained in the next section). On this basis,
the Iwasawa Main Conjecture for constant ordinary abelian varieties is proved in \cite{LLTT2} when $\calf$ is a $\Z_p^d$-extension.

Second, one can take as $\calf$ the maximal arithmetic pro-$p$-extension of $F$, i.e., $\calf=\calf^{arit}=F\F_F^{(p)}$, where $\F_F^{(p)}$
is the subfield of $\overline\F_F$ defined by $Gal(\F_F^{(p)}/\F_F)\simeq\Z_p$ (note that this is the setting of \cite[Theorem 1.7]{OT}).
In this case Trihan has obtained a proof of the Iwasawa Main Conjecture, by techniques of syntomic cohomology. No
assumption on the abelian variety $A/F$ is needed: the relevant $p$-adic $L$-function is defined by means of cohomology and
it interpolates the Hasse-Weil $L$-function (see \cite{LLTT3}).
\end{rem}

\section{Class groups} \label{s:classgroups}

For any finite extension $L/F$, $\cala(L)$ will denote the $p$-part of the group of degree zero divisor classes of $L$; for any $\calf'$
intermediate between $F$ and $\calf$, we put $\cala(\calf'):=\liminv\cala(L)$ as $L$ runs among finite subextensions of $\calf'/F$ (the limit
being taken with respect to norm maps). The study of similar objects and their relations with zeta functions is an old subject and was the
starting point for Iwasawa himself (see \cite{thak} for a quick summary). The goal of this section is to say something on what is known about
Iwasawa Main Conjectures for class groups in our setting.

\subsection{Crew's work} A version of the Iwasawa Main Conjecture over global function fields was proved by R. Crew in \cite{crew2}.
His main tools are geometric: so he considers an affine curve $X$ over a finite field of characteristic $p$ (in the language of the
present paper, $F$ is the function field of $X$) and a $p$-adic character of $\pi_1(X)$, that is, a continuous homomorphism
$\rho\colon\pi_1(X)\longrightarrow R^*$, where $R$ is a complete local noetherian ring of mixed characteristic, with maximal ideal
$\mathfrak m$ (notice that the Iwasawa algebras $\L_d$ introduced in section \ref{ss:selmer} above are rings of this kind).
To such a $\rho$ are attached $H(\rho,x)\in R[x]$ (the characteristic polynomial of the cohomology of a certain \'etale sheaf - see
\cite{crew} for more explanation) and the $L$-function $L(\rho,x)\in R[[x]]$. The main theorem of \cite{crew2} proves, by means of
\'etale and crystalline cohomology, that the ratio $L(\rho,x)/H(\rho,x)$ is a unit in the $\mathfrak m$-adic completion of $R[x]$.
An account of the geometric significance of this result (together with some of the necessary background) is provided by Crew himself
in \cite{crew}; in \cite[\S3]{crew2} he shows the following application to Iwasawa theory. Letting (in our notations) $R$ be the
Iwasawa algebra $\L(\calf_d)$, the special value $L(\rho,1)$ can be seen to be a Stickelberger element (the definition will be recalled
in section \ref{s:stickelberger} below). As for  $H(\rho,1)$, \cite[Proposition 3.1]{crew} implies that it generates the characteristic
ideal of the torsion $\L_d$-module $\liminv\cala(L)^\vee$, $L$ varying among finite subextensions of $\calf_d/F$ \footnote{Note that
in \cite{crew} our $\cala(L)$'s appear as Picard groups, so the natural functoriality yields $\cala(L)\rightarrow\cala(L')$
if $L\subset L'$ - that is, arrows are opposite to the ones we consider in this paper: hence Crew takes Pontrjagin duals and we don't.}.
The Iwasawa Main Conjecture follows.

Crew's cohomological techniques are quite sophisticated. A more elementary approach was suggested by Kueh, Lai and Tan in \cite{klt}
(and refined, with Burns's contribution and different cohomological tools, in \cite{blt09}). In the next two sections we will give a brief
account of this approach (and its consequences) in a particularly simple setting, related to Drinfeld-Hayes cyclotomic extensions (which will
be the main topic of section \ref{s:carlitz}).

\subsection{Characteristic ideals for class groups} \label{s:charideal}
In this section\index{Characteristic ideal} (which somehow parallels section \ref{LamMod}) we describe an algebraic object which can be associated
to the inverse limit of class groups in a $\Z_p^\N$-extension $\calf$ of a global function field $F$. Since our first goal is to use this
``algebraic $L$-function'' for the cyclotomic extension which will appear in section \ref{CycNot}, we make the following simplifying assumption.

\begin{ass} \label{assumpt} There is only one ramified prime in $\calf/F$ (call it $\pr$) and it is totally ramified (in
particular this implies that $\calf$ is disjoint from $\calf^{arit}\,$). \end{ass}

We shall use some ideas of \cite{klt2} which, in our setting, provide a quite elementary approach to the problem.
We maintain the notations of section \ref{LamMod}: $\calf/F$ is a $\Z_p^\N$-extension with Galois group $\Gamma$ and associated Iwasawa
algebra $\Lambda$ with augmentation ideal $I$. For any $d\geqslant 0$ let $\calf_d$ be a $\Z_p^d$-extension of $F$ contained in $\calf$,
taken so that $\bigcup \calf_d=\calf$. \bigskip

For any finite extension $L/F$ let $\calm(L)$ be the $p$-adic completion of the group of divisor classes $\Div(L)/P_L$ of $L$, i.e.,
\[ \calm(L):=(L^*\backslash\idl_L/\Pi_v \ol_v^*) \otimes \Z_p \]
where $\idl_L$ is the group of ideles of $L$. As before, when $\call/F$ is an infinite extension,
we put $\calm(\call):=\liminv\calm(K)$ as $K$ runs among finite subextensions of $\call/F$
(the limit being taken with respect to norm maps). For two finite extensions $L\supset L'\supset F$,
the degree maps $\deg_L$ and $\deg_{L'}$ fit into the commutative diagram (with exact rows)
\begin{equation}\label{CommDiagDeg}
\xymatrix{\cala(L)\ar@{^(->}[r] \ar[d]^{N^L_{L'}}& \calm(L) \ar@{->>}[rr]^{\deg_L}\ar[d]^{N^L_{L'}} & & \Z_p \ar[d] \\
\cala(L')\ar@{^(->}[r] & \calm(L') \ar@{->>}[rr]^{\deg_{L'}} & & \Z_p \ ,} \end{equation}
where $N^L_{L'}$ denotes the norm and the vertical map on the right is multiplication by $[\mathbb{F}_L:\mathbb{F}_{L'}]$ (the degree of the
extension between the fields of constants). For an infinite extension $\call/F$ contained in $\calf$, taking projective limits
(and recalling Assumption \ref{assumpt} above), one gets an exact sequence
\begin{equation}\label{degseq}
\xymatrix{ \cala(\call)\ar@{^(->}[r] & \calm(\call) \ar@{->>}[rr]^{\deg_{\call}} & & \Z_p } \ .
\end{equation}

\begin{rem}
If one allows non-geometric extensions, then the $\deg_{\call}$ map above becomes the zero map exactly
when the $\Z_p$-extension $\calf^{arit}\,$ is contained in $\call$.
\end{rem}

It is well known that $\calm({\calf_d})$ is a finitely generated torsion $\L(\calf_d)$-module (see e.g. \cite[Theorem 1]{Gr1}),
so the same holds for $\cala(\calf_d)$
as well. Moreover take any $\Z_p^d$-extension $\calf_d$ of $F$ contained in $\calf$: since our extension $\calf/F$ is totally ramified at
the prime $\pr$, for any $\calf_{d-1}\subset \calf_d$ one has
\begin{equation}\label{isoM} \calm(\calf_d)/I^{\calf_d}_{\calf_{d-1}}\calm(\calf_d)\simeq \calm(\calf_{d-1}) \end{equation}
(see for example \cite[Lemma 13.15]{wash}). As in section \ref{LamMod}, to ease notations we will often erase the $\calf$ from the indices (for example
$I^{\calf_d}_{\calf_{d-1}}$ will be denoted by $I^d_{d-1}\,$), hoping that no confusion will arise. Consider the following diagram
\begin{equation}
\xymatrix{ \cala(\calf_d) \ar@{^(->}[r]\ar[d]^{\gamma-1} & \calm(\calf_d) \ar@{->>}[r]^{\deg}\ar[d]^{\gamma-1} & \Z_p \ar[d]^{\gamma-1} \\
\cala(\calf_d) \ar@{^(->}[r] & \calm(\calf_d) \ar@{->>}[r]^{\deg} & \Z_p }\end{equation}
(where $\overline{\langle\gamma\rangle}=Gal(\calf_d/\calf_{d-1})=:\Gamma^d_{d-1}\,$; note also that the vertical map on the right is 0)
and its snake lemma sequence
\begin{equation} \label{snake0}
\xymatrix{ \cala(\calf_d)^{\Gamma^d_{d-1}} \ar@{^(->}[r] & \calm(\calf_d)^{\Gamma^d_{d-1}} \ar[r]^{\deg} &
\Z_p \ar[d] \\
\Z_p & \calm(\calf_d)/I^d_{d-1}\calm(\calf_d) \ar@{->>}[l]_{\deg\ \quad\ } &  \cala(\calf_d)/I^d_{d-1}\cala(\calf_d) \ar[l]\ .}
\end{equation}
For $d\geqslant 2$ the $\L_d$-module $\Z_p$ is pseudo-null, hence \eqref{degseq} yields $Ch_{\L_d}(\calm(\calf_d))=Ch_{\L_d}(\cala(\calf_d))$,
and, using \eqref{isoM} and \eqref{snake0}, one finds (for $d\geqslant 3$)
\begin{align} \label{CharA} Ch_{\L_{d-1}} (\cala(\calf_d)/I^d_{d-1}\cala(\calf_d))
& = Ch_{\L_{d-1}} (\calm(\calf_d)/I^d_{d-1}\calm(\calf_d)) \nonumber\\
& = Ch_{\L_{d-1}} (\calm(\calf_{d-1})) = Ch_{\L_{d-1}} (\cala(\calf_{d-1})) \end{align}
(where all the modules involved are $\L_{d-1}$-torsion modules).

Let
\begin{equation} \label{exseqNR} N(\calf_d) \iri \cala(\calf_d) \stackrel{\iota}{\longrightarrow} E(\calf_d) \sri R(\calf_d) \end{equation}
be the exact sequence coming from the structure theorem for $\Lambda_d$-modules (see section \ref{LamMod}), where
\[ E(\calf_d):=\bigoplus_i \Lambda_d/(f_{i,d}) \]
is an elementary module and $N(\calf_d)$, $R(\calf_d)$ are pseudo-null. Let $Ch_{\Lambda_d}(\cala(\calf_d))$
be the characteristic ideal of $\cala(\calf_d)$: we want to compare $Ch_{\Lambda_{d-1}}(\cala(\calf_{d-1}))$
with $\pi^{\calf_d}_{\calf_{d-1}}(Ch_{\Lambda_d}(\cala(\calf_d)))$ for some $\calf_{d-1}\subset \calf_d$ and show that these
characteristic ideals form an inverse system
(in $\Lambda$). Consider the module $B(\calf_d):=N(\calf_d)\oplus R(\calf_d)$. We need the following hypothesis.

\begin{ass} \label{assumptFd} There is a choice of the pseudo-isomorphism $\iota$ in \eqref{exseqNR} and a splitting
of the projection $\Gamma_d\twoheadrightarrow\Gamma_{d-1}$ so that
\begin{itemize}
\item[{\it i)}] $\Gamma_d= \overline{\langle\gamma_d\rangle} \oplus \Gamma_{d-1}$;
\item[{\it ii)}] $B(\calf_d)$ is a finitely generated torsion $\Z_p[[\Gamma_{d-1}]]$-module.
\end{itemize}
\end{ass}

As explained in \cite{Gr} (see the remarks just before Lemma 3), for any $\calf_d$ and $\iota$ one can find a subfield $\calf_{d-1}$
so that Assumption \ref{assumptFd} holds.

In order to ease notations, we put  $\gamma=\gamma_d$, so that $\Gamma^d_{d-1}=\overline{\langle\gamma\rangle}$.

\begin{lem}\label{FirstCharIdEq}
With the above notations, one has
\[ Ch_{\Lambda_{d-1}}(\cala(\calf_d)/I^d_{d-1}\cala(\calf_d)) = \pi^d_{d-1}\big(Ch_{\Lambda_d}(\cala(\calf_d))\big)\cdot
Ch_{\Lambda_{d-1}}(\cala(\calf_d)^{\Gamma^d_{d-1}})\ . \]
\end{lem}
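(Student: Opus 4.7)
The plan is to use the pseudo-isomorphism $\iota$ of \eqref{exseqNR} to transfer the identity from $\cala(\calf_d)$ to the elementary module $E(\calf_d)$, where it can be verified one cyclic summand at a time. Let $T:=\gamma-1\in\Lambda_d$, so that by Assumption \ref{assumptFd}(i) one has $I^d_{d-1}=T\Lambda_d$ and multiplication by $T$ on any $\Lambda_d$-module $M$ yields the four-term exact sequence
\[0\to M^{\Gamma^d_{d-1}}\to M\xrightarrow{T} M\to M/TM\to 0.\]
First I would split \eqref{exseqNR} into the two short exact sequences $0\to N(\calf_d)\to\cala(\calf_d)\to E'(\calf_d)\to 0$ and $0\to E'(\calf_d)\to E(\calf_d)\to R(\calf_d)\to 0$, where $E'(\calf_d):=\iota(\cala(\calf_d))$, and apply the snake lemma to each with the vertical maps given by multiplication by $T$. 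This produces two six-term exact sequences of $\Lambda_{d-1}$-modules linking the $\Gamma^d_{d-1}$-invariants and coinvariants of the four modules involved.

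Next I would clear the pseudo-null contributions. By Assumption \ref{assumptFd}(ii), $N(\calf_d)$ and $R(\calf_d)$ are finitely generated and $\Lambda_{d-1}$-torsion, so multiplicativity of $Ch_{\Lambda_{d-1}}$ on the displayed four-term sequence (applied to $M=N(\calf_d)$ and $M=R(\calf_d)$) gives $Ch_{\Lambda_{d-1}}(N(\calf_d)^{\Gamma^d_{d-1}})=Ch_{\Lambda_{d-1}}(N(\calf_d)/TN(\calf_d))$ and the analogous equality for $R(\calf_d)$. Feeding these equalities into the two six-term snake sequences and invoking multiplicativity of $Ch_{\Lambda_{d-1}}$ once more, the $N$- and $R$-contributions cancel in pairs and one is left with the clean identity
\[Ch_{\Lambda_{d-1}}(\cala(\calf_d)/T\cala(\calf_d))\cdot Ch_{\Lambda_{d-1}}(E(\calf_d)^{\Gamma^d_{d-1}})=Ch_{\Lambda_{d-1}}(\cala(\calf_d)^{\Gamma^d_{d-1}})\cdot Ch_{\Lambda_{d-1}}(E(\calf_d)/TE(\calf_d)).\]
It then suffices to establish the lemma with $E(\calf_d)$ in place of $\cala(\calf_d)$. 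By additivity of $Ch$ on direct sums, this reduces to a single cyclic summand $\Lambda_d/(f)$. If $T\nmid f$, then since $\Lambda_d$ is a UFD one has $(\Lambda_d/(f))^{\Gamma^d_{d-1}}=0$, while $\Lambda_d/(f,T)\simeq\Lambda_{d-1}/(\pi^d_{d-1}(f))$, so the identity collapses to $(\pi^d_{d-1}(f))=\pi^d_{d-1}((f))\cdot(1)$, which is tautological; if $T\mid f$ instead, all three ideals appearing in the formula vanish and the identity is again trivial.

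Combining the displayed identity with the verification on $E(\calf_d)$ and invoking the equality $Ch_{\Lambda_d}(\cala(\calf_d))=Ch_{\Lambda_d}(E(\calf_d))$ (invariance of the characteristic ideal under $\Lambda_d$-pseudo-isomorphism of torsion modules) yields the claim. The principal obstacle is the bookkeeping in the middle step: the repeated appeals to multiplicativity of $Ch_{\Lambda_{d-1}}$ on six-term exact sequences presuppose that every term is finitely generated and torsion over $\Lambda_{d-1}$. Assumption \ref{assumptFd}(ii) provides this for the pieces attached to $N(\calf_d)$ and $R(\calf_d)$; the UFD computation for $\Lambda_d/(f)$ with $T\nmid f$ gives it for the pieces attached to $E(\calf_d)$ and hence $E'(\calf_d)$ (the case $T\mid f$ being the trivial $0=0$); and the remaining terms involving $\cala(\calf_d)$ can then be extracted from the snake sequences themselves.
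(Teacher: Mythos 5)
Your overall strategy---split the pseudo-isomorphism $\iota$ into the two short exact sequences $0\to N\to\cala\to E'\to 0$ and $0\to E'\to E\to R\to 0$, apply the snake lemma with $\gamma-1$ as the vertical map, and then cancel the $N$- and $R$-contributions using Assumption \ref{assumptFd}(ii) and multiplicativity of $Ch_{\Lambda_{d-1}}$---is exactly the decomposition the paper uses (with $E'$ called $C$). The divergence is in how the $E$-side is handled, and that is where a genuine gap appears.

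The paper invokes \eqref{CharA}, i.e., the identification $Ch_{\Lambda_{d-1}}(\cala(\calf_{d-1}))=Ch_{\Lambda_{d-1}}(\cala(\calf_d)/I^d_{d-1}\cala(\calf_d))$ (valid for $d\ge 3$), to conclude that $\cala(\calf_d)/I^d_{d-1}\cala(\calf_d)$ is a \emph{torsion} $\Lambda_{d-1}$-module; feeding this through the exact sequence $\cala(\calf_d)/I^d_{d-1}\cala(\calf_d)\to E(\calf_d)/I^d_{d-1}E(\calf_d)\to R(\calf_d)/I^d_{d-1}R(\calf_d)$ forces every $f_{i,d}$ to lie outside $I^d_{d-1}=(T)$, hence $E(\calf_d)^{\Gamma^d_{d-1}}=0$. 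This makes $Ch_{\Lambda_{d-1}}(E(\calf_d)^{\Gamma^d_{d-1}})=(1)$, so the terms you are trying to cancel never vanish. You never establish this, and your final step ("combining the displayed identity with the verification on $E$ yields the claim") is precisely the place where the absence of this input breaks the argument. Concretely: from the displayed identity
\[Ch_{\Lambda_{d-1}}(\cala/T\cala)\cdot Ch_{\Lambda_{d-1}}(E^{\Gamma^d_{d-1}})=Ch_{\Lambda_{d-1}}(\cala^{\Gamma^d_{d-1}})\cdot Ch_{\Lambda_{d-1}}(E/TE)\]
together with the $E$-version $Ch_{\Lambda_{d-1}}(E/TE)=\pi^d_{d-1}(Ch_{\Lambda_d}(E))\cdot Ch_{\Lambda_{d-1}}(E^{\Gamma^d_{d-1}})$, you want to cancel $Ch_{\Lambda_{d-1}}(E^{\Gamma^d_{d-1}})$ on both sides. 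In the branch $T\mid f_i$ that you admit into your case analysis, this ideal is $0$, so both displays degenerate to $0=0$ and no conclusion about $Ch_{\Lambda_{d-1}}(\cala/T\cala)$ follows. To close the gap without \eqref{CharA} you would need to argue separately that when some $f_{i,d}\in(T)$, the module $\cala(\calf_d)/T\cala(\calf_d)$ is itself not $\Lambda_{d-1}$-torsion (it differs from the non-torsion module $E/TE$ only by $\Lambda_{d-1}$-torsion pieces coming from $N/T$ and $R^{\Gamma^d_{d-1}}$), so that both sides of the lemma vanish; this is true but is not in your write-up, and the sentence "the case $T\mid f$ being the trivial $0=0$" addresses only the $E$-version, not the deduction for $\cala$. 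Either add that remark, or import \eqref{CharA} to rule out $T\mid f_{i,d}$ outright, which is the paper's route and also makes the snake sequences collapse ($C^{\Gamma^d_{d-1}}=0$) so that the multiplicativity bookkeeping is cleaner.
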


\begin{proof}
We split the previous sequence in two by
\[ N(\calf_d) \iri \cala(\calf_d) \sri C(\calf_d) \ ,\
C(\calf_d) \iri E(\calf_d) \sri R(\calf_d)  \]
and consider the snake lemma sequences coming from the following diagrams
\begin{equation} \label{diags}
\xymatrix{ N(\calf_d) \ar@{^(->}[r]\ar[d]^{\gamma-1} & \cala(\calf_d) \ar@{->>}[r]\ar[d]^{\gamma-1} & C(\calf_d) \ar[d]^{\gamma-1} &
C(\calf_d) \ar@{^(->}[r]\ar[d]^{\gamma-1} & E(\calf_d) \ar@{->>}[r]\ar[d]^{\gamma-1} & R(\calf_d) \ar[d]^{\gamma-1} \\
N(\calf_d) \ar@{^(->}[r] & \cala(\calf_d) \ar@{->>}[r] & C(\calf_d) &
C(\calf_d) \ar@{^(->}[r] & E(\calf_d) \ar@{->>}[r]& R(\calf_d) \ ,}\end{equation}
i.e.,
\begin{equation} \label{snake1} \xymatrix{ N(\calf_d)^{\Gamma^d_{d-1}} \ar@{^(->}[r] & \cala(\calf_d)^{\Gamma^d_{d-1}} \ar[r] &
C(\calf_d)^{\Gamma^d_{d-1}} \ar[d] \\
C(\calf_d)/I^d_{d-1}C(\calf_d) & \cala(\calf_d)/I^d_{d-1}\cala(\calf_d) \ar@{->>}[l] &  N(\calf_d)/I^d_{d-1}N(\calf_d) \ar[l]}
\end{equation}
and
\begin{equation} \label{snake2}\xymatrix{ C(\calf_d)^{\Gamma^d_{d-1}} \ar@{^(->}[r] & E(\calf_d)^{\Gamma^d_{d-1}} \ar[r] &
R(\calf_d)^{\Gamma^d_{d-1}} \ar[d] \\
R(\calf_d)/I^d_{d-1}R(\calf_d) & E(\calf_d)/I^d_{d-1}E(\calf_d) \ar@{->>}[l] & C(\calf_d)/I^d_{d-1}C(\calf_d) \ar[l]\ .}
\end{equation}

From \eqref{exseqNR} we get an exact sequence
$$\cala(\calf_d)/I^d_{d-1}\cala(\calf_d)\longrightarrow \bigoplus_i \Lambda_d/(\gamma-1, f_{i,d}) \longrightarrow  R(\calf_d)/I^d_{d-1}R(\calf_d)$$
where the last term is a torsion $\Lambda_{d-1}$-module. So is $\cala(\calf_{d-1})$ for $d\geqslant 3$ and, by \eqref{CharA},
$Ch_{\L_{d-1}} (\cala(\calf_{d-1})) = Ch_{\L_{d-1}} \big(\cala(\calf_d)/I^d_{d-1}\cala(\calf_d)\big)\,$.
It follows that none of the $f_{i,d}$'s belong to $I^d_{d-1}\,$.
Therefore:\begin{itemize}
\item[{\bf 1.}] the map $\gamma -1\,\colon\,E(\calf_d)\longrightarrow E(\calf_d)$ has trivial kernel, i.e.,
$E(\calf_d)^{\Gamma^d_{d-1}}=0$ so that $C(\calf_d)^{\Gamma^d_{d-1}}=0$ as well;
\item[{\bf 2.}] the characteristic ideal of the $\Lambda_{d-1}$-module $E(\calf_d)/I^d_{d-1}E(\calf_d)$ is generated by the
product of the $f_{i,d}$'s modulo $I^d_{d-1}\,$, hence it is obviously equal to $\pi^d_{d-1}(Ch_{\Lambda_d}(\cala(\calf_d)))$.
\end{itemize}
Moreover, from the fact that $N(\calf_d)$ and $R(\calf_d)$ are finitely generated torsion $\Lambda_{d-1}$-modules
\footnote{It might be worth to notice that this is the only point where we use the hypothesis that Assumption \ref{assumptFd} holds.}
and the multiplicativity of characteristic ideals, looking at the left (resp. right) vertical sequence of the first
(resp. second) diagram in \eqref{diags}, one finds
\[ Ch_{\Lambda_{d-1}}(N(\calf_d)^{\Gamma^d_{d-1}})=Ch_{\Lambda_{d-1}}(N(\calf_d)/I^d_{d-1}N(\calf_d))\]
and
\[ Ch_{\Lambda_{d-1}}(R(\calf_d)^{\Gamma^d_{d-1}})=Ch_{\Lambda_{d-1}}(R(\calf_d)/I^d_{d-1}R(\calf_d))\ .\]
Hence from \eqref{snake1} one has
\begin{align} Ch_{\Lambda_{d-1}}(\cala(\calf_d)/I^d_{d-1}\cala(\calf_d))& = Ch_{\Lambda_{d-1}}\big(C(\calf_d)/I^d_{d-1}C(\calf_d)\big)\cdot
Ch_{\Lambda_{d-1}}(N(\calf_d)^{\Gamma^d_{d-1}}) \nonumber\\
& = Ch_{\Lambda_{d-1}}\big(C(\calf_d)/I^d_{d-1}C(\calf_d)\big)\cdot Ch_{\Lambda_{d-1}}(\cala(\calf_d)^{\Gamma^d_{d-1}}) \nonumber \end{align}
(where the last line comes from the isomorphism $\cala(\calf_d)^{\Gamma^d_{d-1}}\simeq N(\calf_d)^{\Gamma^d_{d-1}}\,$).
The sequence \eqref{snake2} provides the equality
\begin{align} Ch_{\Lambda_{d-1}}(C(\calf_d)/I^d_{d-1}C(\calf_d)) & = Ch_{\Lambda_{d-1}}(E(\calf_d)/I^d_{d-1}E(\calf_d))\nonumber\\
& = \pi^d_{d-1}(Ch_{\Lambda_d}(\cala(\calf_d)))\ .\nonumber \end{align}
Therefore one concludes that
\begin{equation}\label{CharId1} Ch_{\Lambda_{d-1}}(\cala(\calf_d)/I^d_{d-1}\cala(\calf_d)) = \pi^d_{d-1}\big(Ch_{\Lambda_d}(\cala(\calf_d))\big)\cdot
Ch_{\Lambda_{d-1}}(\cala(\calf_d)^{\Gamma^d_{d-1}})\ .\end{equation} \end{proof}

Our next step is to prove that $\cala(\calf_d)^{\Gamma^d_{d-1}}=0$ (note that it would be enough to prove that it is pseudo-null
as a $\L_{d-1}$-module). For this we need first a few lemmas.

\begin{lem} \label{tensorcohom}
Let $G$ be a finite group and endow $\Z_p$ with the trivial $G$-action. Then for any $G$-module $M$ we have
$$H^i(G,M\otimes\Z_p)= H^i(G,M)\otimes\Z_p$$
for all $i\geqslant 0$.
\end{lem}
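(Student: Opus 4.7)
The plan is to compute both sides via a common resolution and then exploit flatness of $\Z_p$ over $\Z$. Because $G$ is finite, the standard bar complex provides a resolution $P_\bullet \twoheadrightarrow \Z$ of the trivial $\Z[G]$-module $\Z$ in which each $P_n$ is a \emph{finitely generated} free $\Z[G]$-module (explicitly $P_n=\Z[G^{n+1}]$, which has rank $|G|^n$ over $\Z[G]$). For any $G$-module $N$ the group cohomology is then computed as $H^i(G,N)=H^i(C^\bullet(N))$, where $C^n(N):=\operatorname{Hom}_{\Z[G]}(P_n,N)$.

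The first step is to identify the cochain complex with coefficients in $M\otimes_\Z\Z_p$. Writing $P_n\cong \Z[G]^{k_n}$ with $k_n<\infty$, one has the chain of natural isomorphisms
\[ C^n(M\otimes_\Z\Z_p)\cong (M\otimes_\Z\Z_p)^{k_n}\cong M^{k_n}\otimes_\Z\Z_p\cong C^n(M)\otimes_\Z\Z_p, \]
and these are compatible with the Hochschild differentials, yielding an isomorphism of cochain complexes of abelian groups $C^\bullet(M\otimes_\Z\Z_p)\cong C^\bullet(M)\otimes_\Z\Z_p$.

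The second step is to invoke flatness: $\Z_p$ is flat over $\Z$, so the functor $-\otimes_\Z\Z_p$ is exact and therefore commutes with the formation of cohomology of a complex of abelian groups. Combining this with the previous identification gives
\[ H^i(G,M\otimes\Z_p)=H^i\bigl(C^\bullet(M)\otimes_\Z\Z_p\bigr)=H^i\bigl(C^\bullet(M)\bigr)\otimes_\Z\Z_p=H^i(G,M)\otimes_\Z\Z_p, \]
which is the statement.

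There is no real technical obstacle, but the only delicate point worth highlighting is the necessity of \emph{finite} rank of each $P_n$: for an infinite index set $I$, the canonical map $N^I\otimes_\Z\Z_p\to (N\otimes_\Z\Z_p)^I$ is in general not an isomorphism. This is the single place where the finiteness of $G$ is really used, and it is what fails for an arbitrary profinite group.
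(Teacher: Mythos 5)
Your proof is correct. You identify the cochain complexes $C^\bullet(M\otimes\Z_p)\cong C^\bullet(M)\otimes_\Z\Z_p$ using finiteness of $G$ (so the bar resolution has finitely generated free terms, and $\operatorname{Hom}_{\Z[G]}(P_n,-)$ becomes a finite product, which commutes with $\otimes_\Z\Z_p$), and then invoke the standard fact that cohomology of a complex of abelian groups commutes with base change along a flat ring $\Z\to\Z_p$. This is a genuinely different, and more streamlined, route than the one sketched in the paper. The paper's argument first establishes $(X\otimes\Z_p)^G=X^G\otimes\Z_p$ for $X$ torsion-free as an abelian group by passing through $Y=X\otimes\Q$ and a saturation argument, applies this termwise to the standard complex to settle the torsion-free case, and then deduces the general statement from a two-step presentation of $M$ by torsion-free $G$-modules. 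Your approach avoids the torsion-free reduction entirely: flatness of $\Z_p$ over $\Z$ already makes $-\otimes_\Z\Z_p$ exact, hence it preserves kernels and images and so commutes with forming cohomology of any complex, with no hypothesis on torsion. What the paper's route buys is that it never has to say explicitly that $-\otimes\Z_p$ preserves quotients by non-saturated submodules; what your route buys is economy and transparency, since everything is reduced to one clean homological principle. Both arguments crucially use that $G$ is finite, and you correctly flag the precise point where this enters (the failure of $N^I\otimes\Z_p\to(N\otimes\Z_p)^I$ to be an isomorphism for infinite $I$), which is also the obstruction to extending the statement to profinite $G$.
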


This result should be well-known. Since we were not able to find a suitable reference, here is a sketch of the proof.

\begin{proof}
Let $X$ be a $G$-module which has no torsion as an abelian group and put $Y:=X\otimes\Q$. It is not hard to prove that
$Y^G\otimes\Z_p =(Y\otimes\Z_p)^G$ and it follows that the same holds for $X$,
since $X^G\otimes\Z_p$ is a saturated submodule of $X\otimes\Z_p$. Applying this to the standard complex by means of which
the $H^i(G,M)$ are defined, one can prove the equality in the case $M$
has no torsion as an abelian group. The general case follows because  any $G$-module is the quotient of two such modules.
\end{proof}

Up to now we have mainly considered $\calm(\call)$ as an Iwasawa module (for various $\call$), now we focus on its
interpretation as a group of divisor classes. Let $L$ be a finite extensions of $F$ and recall that we defined $\calm(L)=(\Div(L)/P_L)\otimes\Z_p\,$.
From the exact sequence
\[ \mathbb{F}_L^* \iri L^* \sri P_L \]
and the fact that $|\mathbb{F}_L^*|$ is prime with $p$, one finds an isomorphism between $L^*\otimes\Z_p$ and $P_L\otimes\Z_p\,$.
Hence we can (and will) identify the two.

\begin{lem} \label{divsurj}
For any finite Galois extension $L/F$, the map
\[ \Div(L)^{Gal(L/F)}\otimes\Z_p \longrightarrow \calm(L)^{Gal(L/F)} \]
is surjective.
\end{lem}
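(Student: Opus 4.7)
The plan is to exploit the defining exact sequence of $\calm(L)$ together with Hilbert 90, using Lemma \ref{tensorcohom} to control what happens after tensoring with $\Z_p$. First I would start from the short exact sequence $\mathbb{F}_L^* \iri L^* \sri P_L$. Since $|\mathbb{F}_L^*|$ is coprime to $p$, tensoring with $\Z_p$ gives the identification $L^*\otimes\Z_p \simeq P_L\otimes\Z_p$ already mentioned in the excerpt. Combining with the definition $\calm(L)=(\Div(L)/P_L)\otimes\Z_p$ and the flatness of $\Z_p$ over $\Z$, this produces a short exact sequence of $G$-modules
\[ 0 \ri L^*\otimes\Z_p \ri \Div(L)\otimes\Z_p \ri \calm(L) \ri 0 \]
where $G:=Gal(L/F)$.

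Next I would take the long exact sequence in $G$-cohomology, obtaining
\[ (\Div(L)\otimes\Z_p)^G \ri \calm(L)^G \ri H^1(G, L^*\otimes\Z_p). \]
By Lemma \ref{tensorcohom}, $H^1(G, L^*\otimes\Z_p)\simeq H^1(G,L^*)\otimes\Z_p$, and Hilbert 90 forces $H^1(G,L^*)=0$. Hence the map $(\Div(L)\otimes\Z_p)^G \ri \calm(L)^G$ is already surjective.

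To finish, I need to identify $(\Div(L)\otimes\Z_p)^G$ with $\Div(L)^G\otimes\Z_p$. This is the easy part: $\Div(L)$ is a permutation $G$-module, i.e.\ the free abelian group on the set of places of $L$, so its invariants are simply the free abelian group on the set of $G$-orbits of places. The same description applies to the invariants of $\Div(L)\otimes\Z_p$, giving the natural identification $\Div(L)^G\otimes\Z_p=(\Div(L)\otimes\Z_p)^G$. Chaining this with the surjectivity above yields the lemma.

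There is no real obstacle here; the only thing to be slightly careful about is the commutation of $G$-invariants with $\otimes\Z_p$, which would fail for a general $G$-module but holds trivially for the permutation module $\Div(L)$. Hilbert 90 and Lemma \ref{tensorcohom} do all the work.
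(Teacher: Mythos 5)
Your proof is correct and follows essentially the same route as the paper: take $\mathrm{Gal}(L/F)$-cohomology of the exact sequence $L^*\otimes\Z_p \hookrightarrow \Div(L)\otimes\Z_p \twoheadrightarrow \calm(L)$ and invoke Lemma \ref{tensorcohom} together with Hilbert 90. The only cosmetic difference is that for the $H^0$ identification you argue directly from $\Div(L)$ being a permutation module, whereas the paper's invocation of Lemma \ref{tensorcohom} already covers the $i=0$ case along with $i=1$.
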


\begin{proof}
The sequence \begin{equation}\label{exseqdiv}
L^*\otimes\Z_p \hookrightarrow \Div(L)\otimes\Z_p \twoheadrightarrow \calm(L)
\end{equation}
is exact because $\Z_p$ is flat and $|\mathbb{F}_L^*|$ is prime with $p$. The claim follows by taking the
$Gal(L/F)$-cohomology of \eqref{exseqdiv} and applying Lemma \ref{tensorcohom} and Hilbert 90.
\end{proof}

For any finite subextension $L$ of $\calf/F$, let $\pr_L$ be the unique prime lying above $\pr$. In the following lemma, we identify
$\pr_L$ with its image in $\Div(L)\otimes\Z_p$. Moreover for any element $x\in \calm(\calf)$ we let $x_L$ denote its
image in $\calm(L)$ via the canonical norm map.

\begin{lem} \label{supppL}
Let $x\in \calm(\calf)^{\Gamma}$: then, for any $L$ as above, $x_L$ is represented by a $\Gamma$-invariant divisor supported in $\pr_L\,$.
\end{lem}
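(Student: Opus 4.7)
The plan is to use the norm-compatibility $x_L = N^{L'}_L(x_{L'})$ (valid for every $L'\supseteq L$ in the tower, since $x$ lies in the inverse limit) together with the observation that, because $\pr$ is totally ramified but every other prime of $F$ is unramified in $\calf/F$, the ideal norm $N^{L'}_L$ treats these two cases very differently: contributions coming from $\pr_{L'}$ survive unchanged, while contributions from any other prime get multiplied by $[L':L]$, a divergent power of $p$. Compactness of $\calm(L)$ will then force $x_L$ to lie in the $\Z_p$-span of $[\pr_L]$.

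Carrying this out, fix $L$ and note that since $\Gamma$ acts on $\calm(L)$ through the finite quotient $Gal(L/F)$, we have $x_L\in\calm(L)^{Gal(L/F)}$. For any $L'\supseteq L$ in the tower, Lemma \ref{divsurj} lifts $x_{L'}$ to some $D_{L'}\in\Div(L')^{Gal(L'/F)}\otimes\Z_p$. Because $Gal(L'/F)$ acts transitively on the primes of $L'$ above each prime of $F$, this invariant module is freely generated over $\Z_p$ by $\pr_{L'}$ (the unique prime above $\pr$) and by the orbit sums $N_{v,L'}:=\sum_{w|v}w$ for primes $v\ne\pr$ of $F$, so I may write
\[ D_{L'} = a_{L'}\,\pr_{L'} + \sum_{v\ne\pr} b_{v,L'}\,N_{v,L'}\,. \]

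Next I would compute the divisor-level ideal norm, which sends a prime $w'$ of $L'$ to $f(w'/w)\,w$ with $w:=w'\cap L$. Since $\pr$ is totally ramified in $L'/L$, we get $N^{L'}_L(\pr_{L'})=\pr_L$; for $v\ne\pr$, which is unramified, the identity $\sum_{w'|w}f(w'/w)=[L':L]$ gives $N^{L'}_L(N_{v,L'})=[L':L]\,N_{v,L}$. Passing to classes and using that the transition map $\calm(L')\to\calm(L)$ is induced by the divisor norm yields
\[ x_L\,\equiv\,a_{L'}\,[\pr_L]\pmod{[L':L]\,\calm(L)}\,. \]

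To finish, $\calm(L)$ is a finitely generated $\Z_p$-module — a copy of $\Z_p$ (via the degree) together with a finite $p$-group — so it is Hausdorff in the $p$-adic topology, $\bigcap_N p^N\calm(L)=0$, and the cyclic $\Z_p$-submodule $\Z_p[\pr_L]$ is closed. As $L'$ ranges over finite subextensions of $\calf$ containing $L$, the indices $[L':L]$ are unbounded powers of $p$, and the preceding congruences place $x_L$ in $\bigcap_{L'}(\Z_p[\pr_L]+[L':L]\,\calm(L))=\Z_p[\pr_L]$. Writing $x_L=a\,[\pr_L]$ with $a\in\Z_p$, the divisor $a\,\pr_L$ is the required $\Gamma$-invariant lift supported at $\pr_L$. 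The main delicate step is precisely this last closure argument: the scalars $a_{L'}$ need not converge a priori, and it is only via the compactness of $\calm(L)$ and the closedness of $\Z_p[\pr_L]$ that the approximations $x_L\equiv a_{L'}[\pr_L]$ upgrade to a genuine equality $x_L=a\,[\pr_L]$.
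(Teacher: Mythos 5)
Your proof is correct and follows essentially the same route as the paper's: lift $x_{L'}$ to an invariant divisor via Lemma \ref{divsurj}, observe that the norm kills every contribution away from $\pr$ modulo $[L':L]$, and conclude by closedness of $\Z_p\pr_L$ in $\calm(L)$. The only cosmetic difference is that the paper invokes Lemma \ref{divsurj} for $Gal(K/L)$-invariance and phrases the norm step as $D_K=\iota_L^K(D_L)$ (so $N_L^K(D_K)=[K:L]D_L$) rather than decomposing into orbit sums, but the content is identical.
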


\begin{proof}
For any $L$, let $y_L$ be the image of $\pr_L$ in $\calm(L)$. Since $\Z_py_L$ is a closed subset of $\calm(L)$, to prove the lemma
it is enough to show that $\big(x_L+p^n\calm(L)\big)\cap\Z_py_L\neq\emptyset$ for any $n$.

For any finite Galois extension $K/L$ we have the maps
$$\iota_L^K\colon\Div(L)\otimes\Z_p\longrightarrow\Div(K)\otimes\Z_p$$
and
$$N_L^K\colon\Div(K)\otimes\Z_p\longrightarrow\Div(L)\otimes\Z_p$$
respectively induced by the inclusion and the norm. For any divisor whose support is unramifed in $K/L$ we have
$$N_L^K(\iota_L^K(D))=[K:L]D\,.$$
Also, Lemma \ref{tensorcohom} yields
$$(\Div(K)\otimes\Z_p)^{Gal(K/L)}=\Div(K)^{Gal(K/L)}\otimes\Z_p=\iota_L^K(\Div(L)\otimes\Z_p)$$
(since in a $Gal(K/L)$-invariant divisor all places of $K$ above a same place of $L$ occur with the same multiplicity).

Choose $n$ and let $K\subset\calf$ be such that $[K:L]\geqslant p^n$. By Lemma \ref{divsurj}, there exists a $Gal(K/L)$-invariant
$E_K\in\Div(K)\otimes\Z_p$ having image $x_K$. Write $E_K=D_K+a_K\pr_K$, where $a_K\in\Z_p$ and $D_K$ has support disjoint from
$\pr_K$. Then $D_K$ is Galois invariant, so $D_K=\iota_L^K(D_L)$ and (using Assumption \ref{assumpt})
$$N^K_L(E_K)=[K:L]D_L+a_K\pr_L\,.$$
Projecting into $\calm(L)$ we get $x_L\in a_Ky_L+p^n\calm(L)\,.$
\end{proof}

\begin{cor} $\cala(\calf_d)^{\Gamma^d_{d-1}}=0$.
\end{cor}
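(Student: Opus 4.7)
The plan is to adapt the argument of Lemma \ref{supppL}. Given $x \in \cala(\calf_d)^{\Gamma^d_{d-1}}$, I will show that for every finite subextension $L \subset \calf_d$ the image $x_L$ lies in the closed $\Z_p$-submodule of $\calm(L)$ generated by $y_L := [\pr_L]$. The conclusion is then immediate: writing $x_L = a y_L$ for some $a \in \Z_p$, one has $\deg x_L = 0$ while $\deg y_L = \deg\pr \neq 0$, so in the torsion-free group $\Z_p$ we get $a = 0$, whence $x_L = 0$ for every such $L$ and thus $x = 0$.

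The sole adaptation required to run the proof of Lemma \ref{supppL} lies in the choice of the auxiliary Galois extension. Since $x$ is only $\Gamma^d_{d-1}$-invariant, the action of $\Gamma^d_{d-1}$ on $x_K$ (for any finite $K \subset \calf_d$) factors through $Gal(K/K\cap\calf_{d-1})$. Thus, given $L \subset \calf_d$ finite and $n \geq 1$, I will need a finite $\wt K \subset \calf_d$ containing $L$ with $[\wt K : L] \geq p^n$ and $\wt K \cap \calf_{d-1} = L \cap \calf_{d-1}$; this last condition guarantees that $x_{\wt K}$ is $Gal(\wt K/L)$-invariant. Such a $\wt K$ is built using the splitting $\Gamma_d = \Gamma^d_{d-1} \oplus \Gamma_{d-1}$ provided by Assumption \ref{assumptFd}: writing $H_L := Gal(\calf_d/L)$ and letting $\pi \colon \Gamma_d \to \Gamma_{d-1}$ be the projection associated with the splitting, the exact sequence
\[
0 \to H_L \cap \Gamma^d_{d-1} \to H_L \to \pi(H_L) \to 0
\]
splits because $\pi(H_L)$, as an open subgroup of $\Z_p^{d-1}$, is a free $\Z_p$-module. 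Replacing the $\Gamma^d_{d-1}$-summand of any chosen splitting by its $p^n$-th multiple produces an open subgroup $H_{\wt K} \subset H_L$ of index $p^n$ satisfying $H_{\wt K} + \Gamma^d_{d-1} = H_L + \Gamma^d_{d-1}$; its fixed field $\wt K$ does the job.

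With $\wt K$ in hand, the proof of Lemma \ref{supppL} applies verbatim: Lemma \ref{divsurj} yields a $Gal(\wt K/L)$-invariant $E = D + a\pr_{\wt K} \in \Div(\wt K)\otimes\Z_p$ representing $x_{\wt K}$ (with $D$ supported away from $\pr_{\wt K}$); Assumption \ref{assumpt} ensures those supporting places are unramified in $\wt K/L$, forcing $D = \iota^{\wt K}_L(D')$ for some $D' \in \Div(L)\otimes\Z_p$; the norm computation $N^{\wt K}_L(E) = p^n D' + a\pr_L$ (total ramification giving $f(\pr_{\wt K}/\pr_L) = 1$) then yields $x_L \in a y_L + p^n \calm(L)$. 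Letting $n \to \infty$ places $x_L$ in the closure $\overline{\Z_p y_L}$, as required. The only genuinely new technical point is the splitting construction of $\wt K$; once that is in place, the rest is automatic.
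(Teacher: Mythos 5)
Your proof is correct and follows the same overall strategy as the paper's: reduce to a version of Lemma \ref{supppL} showing that each $x_L$ is represented by a divisor supported on $\pr_L$, then kill the coefficient by applying the degree map. The place where you go beyond the text is in spotting a glossed-over point. The paper asserts that Lemma \ref{supppL} holds ``with exactly the same proof'' after substituting $\calf_d,\Gamma_d$ for $\calf,\Gamma$, and then applies the conclusion to an $x$ which is only $\Gamma^d_{d-1}$-invariant. As you correctly note, the original proof of Lemma \ref{supppL} needs $x_K$ to be $Gal(K/L)$-invariant in order to invoke Lemma \ref{divsurj}, and when $x$ is merely $\Gamma^d_{d-1}$-invariant this is not automatic: one must choose the auxiliary field $K$ so that $Gal(K/L)$ lands in the image of $\Gamma^d_{d-1}$, i.e.\ $K\cap\calf_{d-1}=L\cap\calf_{d-1}$. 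Your construction of $\wt K$, by replacing the $\Gamma^d_{d-1}$-summand of a splitting of $H_L$ by its $p^n$-th multiple, produces precisely such a $K$ of index $p^n$ over $L$, and the rest of the argument then runs through as you describe. One small remark: the splitting $\Gamma_d\simeq\Gamma^d_{d-1}\oplus\Gamma_{d-1}$ you invoke is automatic, since the quotient $\Gamma_{d-1}\simeq\Z_p^{d-1}$ is a free $\Z_p$-module; so Assumption \ref{assumptFd} (of which only part \emph{i)} is relevant here, and that part holds in general) is not actually needed for this corollary.
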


\begin{proof}
Taking $\Gamma^d_{d-1}$-invariants in \eqref{degseq} (with $\call=\calf_d$), one finds a similar sequence
\begin{equation}\label{ExSeqAGamma} \xymatrix{\cala(\calf_d)^{\Gamma^d_{d-1}}\,\ar@{^(->}[r] &
\calm(\calf_d)^{\Gamma^d_{d-1}} \ar[rr]^{\deg_{\calf_d}} & & \Z_p \ .}\end{equation}
Lemma \ref{supppL} holds, with exactly the same proof, also replacing $\calf$ and $\Gamma$ with $\calf_d$ and $\Gamma_d$.
Therefore any $x=(x_L)_L\in \calm(\calf_d)^{\Gamma^d_{d-1}}$ can be represented by a sequence $(a_L\pr_L)_L$.
Furthermore $N^K_L(a_K\pr_K)=a_L\pr_L$ implies that the value $a_L$ is independent of $L$: call it $a$. Then
$$\deg_{\calf_d}(x)=\lim \big(a_L\deg_L(\pr_L)\big)=a\deg_F(\pr)\,.$$
Hence $x\in Ker(\deg_{\calf_d})=\cala(\calf_d)^{\Gamma^d_{d-1}}$ only if $a=0$.
\end{proof}

\begin{rem}
The image of the degree map appearing in \eqref{ExSeqAGamma} is $(\deg \pr)\Z_p\,$, so $\deg_{\calf_d}$ always provides an isomorphism
between $\calm(\calf_d)^{\Gamma^d_{d-1}}$ and $\Z_p\,$. Moreover, if $p$ does not divide $\deg\pr$, one has
surjectivity as well. In this case, looking back at the sequence \eqref{snake0}, one finds a short exact sequence
\[ \xymatrix{ \cala(\calf_d)/I^d_{d-1} \cala(\calf_d) \ar@{^(->}[r] &
\calm(\calf_d)/I^d_{d-1} \calm(\calf_d) \ar@{->>}[r]^{\qquad\ \deg} & \Z_p }\ .\]
From \eqref{CommDiagDeg}, by taking the limit with $L$ and $L'$ varying respectively among subextensions of $\calf_d$ and $\calf_{d-1}$,
one obtains a commutative diagram
\[ \xymatrix{ \calm(\calf_d)/I^d_{d-1} \calm(\calf_d) \ar@{->>}[rr]^{\qquad\ \deg} \ar[d]^{N} & & \Z_p \ar@{=}[d] \\
\calm(\calf_{d-1})\ar@{->>}[rr]^{\qquad\ \deg_{\calf_{d-1}}} & & \Z_p } \]
where the map $N$ is the isomorphism induced by the norm, i.e., the one appearing in \eqref{isoM}.
This and the exact sequence \eqref{degseq} for $\call=\calf_{d-1}\,$, show that
$\cala(\calf_d)/I^d_{d-1} \cala(\calf_d)\simeq \cala(\calf_{d-1})$ (for any $d\geqslant 1$).
\end{rem}

From \eqref{CharId1} one finally obtains
\begin{equation}\label{CharId2}
Ch_{\Lambda_{d-1}}(\cala(\calf_{d-1}))=Ch_{\Lambda_{d-1}}(\cala(\calf_d)/I^d_{d-1}\cala(\calf_d)) = \pi^d_{d-1}(Ch_{\Lambda_d}(\cala(\calf_d)))\ .
\end{equation}

We remark that this equation holds for any $\Z_p$-extension $\calf_d/\calf_{d-1}$ satisfying Assumption \ref{assumptFd}.
If the filtration $\{\calf_d\,:\,d\in \N\}$ verifies that Assumption at any level $d$, then the inverse images of the
$Ch_{\Lambda(\calf_d)}(\cala(\calf_d))$ in $\Lambda$ (with respect to the canonical projections
$\pi_{\calf_d}:\Lambda\rightarrow \Lambda(\calf_d)\,$) form an inverse system and we can define

\begin{df}\label{CharIdClGr}
The \emph{pro-characteristic ideal} of $\cala(\calf)$ is
\[ \widetilde{Ch}_{\Lambda}(\cala(\calf)):=\il{\calf_d} (\pi_{\calf_d})^{-1}(Ch_{\Lambda(\calf_d)}(\cala(\calf_d)))\ .\]
\end{df}

\begin{rem}
Two questions naturally arise from the above definition:\begin{itemize}
\item[{\it a.}] {\it is there a filtration verifying Assumption \ref{assumptFd} at any level $d$?}
\item[{\it b.}] (assuming {\it a} has a positive answer) {\it is the limit independent from the chosen filtration?}
\end{itemize}
In the next section we are going to show (in particular in \eqref{CharIdStick} and Corollary \ref{corIMC}) that there
is an element $\theta\in\Lambda$, independent of the filtration and such that, for all $\calf_d$, its image in
$\Lambda(\calf_d)$ generates $Ch_{\Lambda(\calf_d)}(\cala(\calf_d))$. Hence Question {\it b} has a positive answer
(and presumably so does Question {\it a})
and we only needed \eqref{CharId2} as a first step and a natural analogue of \eqref{Stick1}.\\
Nevertheless we believe that these questions have some interest on their own and it would be nice to have a direct construction
of a ``good'' filtration $\{\calf_d\,:\,d\in \N\}$ based on a generalization of \cite[Lemma 2]{Gr}. Since our goal here is the
Main Conjecture we do not pursue this subject further, but we
hope to get back to it in a future paper.\\
We also observe that Assumption \ref{assumptFd} was used only in one passage in the proof of Lemma \ref{FirstCharIdEq},
as we evidentiated in a footnote. It might be easier to show that in that passage one does not need the finitely generated
hypothesis: if so, Definition \ref{CharIdClGr} would makes sense for all filtrations $\{\calf_d\}_d$\ \footnote{This is
exactly the approach taken in \cite{BBL1}, providing a positive answers to question {\it b}.}.
\end{rem}

\begin{rem}
We could have used Fitting ideals, just as we did in section \ref{LamMod}, to provide a more straightforward construction (there would have
been no need for preparatory lemmas). But, since the goal is a Main Conjecture, the characteristic ideals, being principal, provide a better
formulation. We indeed expect equality between Fitting and characteristic ideals in all the cases studied in this paper but, at present,
are forced to distinguish between them (but see Remark \ref{RemBLT}).
\end{rem}

\subsection{Stickelberger elements} \label{s:stickelberger}
We shall briefly describe a relation between the characteristic ideal of the previous section and Stickelberger elements. The main results
on those elements are due to A. Weil, P. Deligne and J. Tate and for all the details the reader can consult \cite[Ch. V]{tate}. Let $S$ be
a finite set of places of $F$ containing all places where the extension $\calf/F$ ramifies; since we are interested in the case where $\calf$
is substantially bigger than the arithmetic extension, we assume $S\neq\emptyset$. We consider also another non-empty finite set $T$ of places
of $F$ such that $S\cap T=\emptyset$. For any place outside $S$ let $Fr_v$ be the Frobenius of $v$ in $\Gm=Gal(\calf/F)$.

Let
\begin{equation}\label{e:Thetaprod}
\Theta_{\calf/F,S,T}(u):=\prod_{v\in T} (1-Fr_vq^{\deg(v)}u^{\deg(v)})\prod_{v\not\in S} (1-Fr_vu^{\deg(v)})^{-1}.
\end{equation}

For any $n\in\N$ there are only finitely many places of $F$ with degree $n$: hence we can expand \eqref{e:Thetaprod} and consider
$\Theta_{\calf/F,S,T}(u)$ as a power series $\sum c_nu^n\in\Z[\Gm][[u]]$. Moreover, it is clear that for any continuous character
$\psi\colon\Gamma\longrightarrow\C^*$ the image $\psi(\Theta_{\calf/F,S,T}(q^{-s}))$ is the $L$-function of $\psi$, relative to $S$
and modified at $T$. For any subextension $F\subset L\subset\calf$, let $\pi^\calf_L\colon\Z[\Gm][[u]]\rightarrow \Z[Gal(L/F)][[u]]$
be the natural projection and define
$$\Theta_{L/F,S,T}(u):=\pi^\calf_L(\Theta_{\calf/F,S,T}(u)).$$
For $L/F$ finite it is known (essentially by Weil's work) that $\Theta_{L/F,S,T}(q^{-s})$ is an element in the polynomial ring $\C[q^{-s}]$
(see \cite[Ch.\,V,  Proposition 2.15]{tate} for a proof): hence $\Theta_{L/F,S,T}(u)\in\Z[Gal(L/F)][u].$ It follows that the coefficients
$c_n$ of $\Theta_{\calf/F,S,T}(u)$ tend to zero in
$$\liminv\Z[Gal(L/F)]=:\Z[[\Gm]]\subset\Lambda\,.$$
Therefore we can define
$$\theta_{\calf/F,S,T}:=\Theta_{\calf/F,S,T}(1)\in\Lambda\,.$$
We also observe that the factors $(1-Fr_vq^{\deg(v)}u^{\deg(v)})$ in \eqref{e:Thetaprod} are units in the ring $\L[[u]]$. Hence the ideal
generated by $\theta_{\calf/F,S,T}$ is independent of the auxiliary set $T$ and we can define the Stickelberger element
$$\theta_{\calf/F,S}:=\theta_{\calf/F,S,T}\prod_{v\in T}(1-Fr_vq^{\deg(v)})^{-1}.$$ \index{Stickelberger element}
We also define, for $F\subset L\subset\calf$,
$$\theta_{L/F,S,T}:=\pi^\calf_L(\theta_{\calf/F,S,T})=\Theta_{L/F,S,T}(1).$$
It is clear that these form a projective system: in particular, for any $\Z_p$-extension $\calf_d/\calf_{d-1}$ the relation
\begin{equation}\label{Stick1} \pi^d_{d-1}(\theta_{\calf_d/F,S,T})=\theta_{\calf_{d-1}/F,S,T} \end{equation}
clearly recalls the one satisfied by characteristic ideals (equation \eqref{CharId2}). Also, to define $\theta_{L/F,S}$ there is no need of
$\calf$: one can take  for a finite extension $L/F$ the analogue of product \eqref{e:Thetaprod} and reason as above.

\begin{thm}[Tate, Deligne]
For any finite extension $L/F$, $|\F_L^*|\theta_{L/F,S}$ is in the annihilator ideal of the class group of $L$ (considered
as a $\Z[Gal(L/F)]$-module).
\end{thm}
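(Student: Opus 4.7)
The statement is a form of the Brumer--Stark theorem in positive characteristic, due in various guises to Tate, Deligne and (for the explicit constants) Hayes. My plan is to treat it in three movements: (a) refine to the $T$-modified element and check integrality, (b) prove annihilation of a suitable $(S,T)$-ray class group via \'etale cohomology, (c) bootstrap to the ordinary class group, absorbing the $|\F_L^*|$ factor along the way.

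For (a), I would fix an auxiliary non-empty $T$ disjoint from $S$ and work with $\theta_{L/F,S,T}=\Theta_{L/F,S,T}(1)$. Grothendieck's cohomological formula identifies, for each character $\chi$ of $Gal(L/F)$, the image $\chi(\Theta_{L/F,S,T}(u))$ with the characteristic polynomial of geometric Frobenius on an alternating sum of compactly supported $\ell$-adic cohomology groups of a Kummer--Artin sheaf on the smooth affine curve $U = X\setminus S$ (with $X$ the curve associated to $F$). The role of the $T$-factors $(1-Fr_v q^{\deg(v)}u^{\deg(v)})$ is precisely to kill the contributions from $H^0_c$ and $H^2_c$, so that what remains is the characteristic polynomial of Frobenius on $H^1_c$, which a priori sits in $\Z[Gal(L/F)][u]$. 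This gives $\theta_{L/F,S,T}\in\Z[Gal(L/F)]$.

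For (b) --- the heart of the matter --- I would define the $(S,T)$-ray class group $Cl^T_S(L)$ as divisors on $\mathrm{Spec}(\mathcal{O}_{L,S})$ modulo principal divisors $(f)$ with $f$ an $S$-unit satisfying $f\equiv 1\pmod v$ for every place $v\in T$, and realize it (tensored with $\Z_\ell$ for each $\ell\neq p$, then descending) as a subquotient of the $H^1_c$ appearing in (a). Frobenius acts on this $H^1_c$ through an endomorphism whose characteristic polynomial is $\Theta_{L/F,S,T}(u)$; Cayley--Hamilton, evaluated at $u=1$, therefore shows that $\theta_{L/F,S,T}$ annihilates $Cl^T_S(L)$ as a $\Z[Gal(L/F)]$-module. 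This is essentially Deligne's argument (and can also be phrased via $1$-motives, as in Tate's book); it is the step I expect to be the main obstacle, since it demands a delicate identification of the class group with an eigenspace of Frobenius in \'etale cohomology, with careful bookkeeping along the boundary divisors in $S$ and $T$.

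Finally, for (c), the defining exact sequence of $T$-units produces
\[ \mathcal{O}_{L,S}^{*,T} \hookrightarrow \mathcal{O}_{L,S}^{*} \longrightarrow \prod_{v\in T}\F_{L,v}^{*} \longrightarrow Cl^T_S(L) \longrightarrow Cl_S(L) \longrightarrow 0, \]
and a further exact sequence supported on the finitely many places of $L$ above $S$ relates $Cl_S(L)$ to $Cl(L)$. On $\prod_{v\in T}\F_{L,v}^*$ the Frobenius-type action produces exactly the factor $\prod_{v\in T}(1-Fr_v q^{\deg(v)})$ that distinguishes $\theta_{L/F,S,T}$ from $\theta_{L/F,S}$; after clearing this factor one is left, via (b), with $\theta_{L/F,S}$ annihilating $Cl(L)$ up to a denominator which divides $|\mu(L)|$. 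In our geometric setting $\mu(L)=\F_L^*$, so multiplication by $|\F_L^*|$ clears this denominator and yields the desired conclusion $|\F_L^*|\theta_{L/F,S}\in \mathrm{Ann}_{\Z[Gal(L/F)]}(Cl(L))$.
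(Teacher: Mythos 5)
The paper offers no argument of its own; it simply quotes \cite[Ch.\,V, Th\'eor\`eme 1.2]{tate}, with a remark that Hayes later gave an independent proof via Drinfeld modules. Your three-step outline is a faithful reconstruction of the Deligne-style cohomological argument recorded in Tate's book, so you are following the intended source. In particular the architecture is right: introduce the auxiliary set $T$ to force integrality, annihilate a ray class group by a Cayley--Hamilton argument on an $\ell$-adic cohomology group, then pass to the statement with $|\F_L^*|$ by using that the $T$-factors $1-Fr_v q^{\deg v}$ generate $\mathrm{Ann}_{\Z[Gal(L/F)]}(\mu_L)$ with $\mu_L=\F_L^*$, so that $|\F_L^*|$ lies in the ideal they span.

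Two places where the sketch, read literally, would not go through. First, ``Cayley--Hamilton on $H^1_c$'' must be carried out over the group ring $\Z_\ell[Gal(L/F)]$, not over $\Z_\ell$: the plain characteristic polynomial of Frobenius over $\Z_\ell$ only yields the scalar $\det(1-F)$, not the group-ring element $\theta_{L/F,S,T}$. What Deligne actually does is construct a $1$-motive from the generalized Jacobian attached to the modulus determined by $S$ and $T$, check that its $\ell$-adic realization is \emph{free} over $\Z_\ell[Gal(L/F)]$ with geometric Frobenius acting with $\Z_\ell[Gal(L/F)]$-characteristic polynomial equal to $\Theta_{L/F,S,T}(u)$, and only then invoke Cayley--Hamilton over that ring. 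The freeness is exactly the ``delicate identification'' you flag, and without it the group-ring annihilation does not follow. Second, your closing sentence about an exact sequence ``relating $Cl_S(L)$ to $Cl(L)$'' goes the wrong way: the natural surjection is $Cl(L)\twoheadrightarrow Cl_S(L)$, so annihilating $Cl_S(L)$ does not by itself annihilate $Cl(L)$. In Tate's treatment the $1$-motive is arranged so that the group named in the theorem (which, given the Corollary that follows in the paper, must contain the $p$-part $\cala(L)$ of $Pic^0$) is reached directly, rather than being recovered a posteriori from $Cl_S(L)$; you should re-examine which class group step (b) actually hands you and restate step (c) accordingly.
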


\begin{proof}
This is \cite[Ch.\,V, Th\'eor\`eme 1.2]{tate}.
\end{proof}

\begin{rem} Another proof of this result was given by Hayes \cite{hayes}, by means of Drinfeld modules. \end{rem}

\begin{cor} Let $\calf_d/F$ be a $\Z_p^d$-extension as before and $S=\{\pr\}$, the unique (totally) ramified prime in $\calf/F$: then
\begin{itemize}
\item[{\bf 1.}] $\theta_{\calf_d/F,S}\cala(\calf_d)=0$;
\item[{\bf 2.}] if $\theta_{\calf_d/F,S}$ is irreducible in $\Lambda(\calf_d)$, then $Ch_{\Lambda(\calf_d)}(\cala(\calf_d))=
(\theta_{\calf_d/F,S})^m$ for some $m\geqslant 1$;
\item[{\bf 3.}] if $\theta_{\calf_d/F,S}$ is irreducible in $\Lambda(\calf_d)$ for all $\calf_d$'s,
then $\wt{Ch}_\Lambda(\cala(\calf))=(\theta_{\calf/F,S})^m$ for some $m\geqslant 1$.
\end{itemize}
\end{cor}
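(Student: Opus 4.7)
\medskip

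\noindent\textbf{Proof plan.}

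For part \textbf{1}, the plan is to descend from the theorem of Tate--Deligne stated just above. For every finite subextension $F\subseteq L\subseteq\calf_d$, that theorem says $|\F_L^*|\theta_{L/F,S}$ annihilates the full divisor class group of $L$; since $\F_L$ has characteristic $p$ the integer $|\F_L^*|$ is coprime to $p$, hence a unit in $\Z_p$, so $\theta_{L/F,S}$ already kills the $p$-part $\cala(L)$ as a $\Z_p[Gal(L/F)]$-module. The images of $\theta_{\calf_d/F,S}$ in $\Z_p[Gal(L/F)]$ are exactly the $\theta_{L/F,S}$, and the norm maps used to form $\cala(\calf_d)=\liminv \cala(L)$ are compatible with these projections, so passing to the inverse limit gives $\theta_{\calf_d/F,S}\cala(\calf_d)=0$.

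For part \textbf{2}, I will work inside $\Lambda(\calf_d)\cong\Z_p[[T_1,\dots,T_d]]$, which is a regular (hence factorial) local ring, so irreducibles generate height-one primes. By the structure theorem recalled before Definition \ref{defCharid}, there is a pseudo-isomorphism $\cala(\calf_d)\sim\bigoplus_i \Lambda(\calf_d)/(g_i^{e_i})$, and $Ch_{\Lambda(\calf_d)}(\cala(\calf_d))=(\prod_i g_i^{e_i})$. Localizing at any height-one prime $\mathfrak q$ that divides $\prod g_i^{e_i}$, the module $\cala(\calf_d)_{\mathfrak q}$ is a non-zero torsion module over the DVR $\Lambda(\calf_d)_{\mathfrak q}$ and is annihilated by $\theta_{\calf_d/F,S}$ by part 1; therefore $\theta_{\calf_d/F,S}\in\mathfrak q$, and since $\theta_{\calf_d/F,S}$ is irreducible this forces $\mathfrak q=(\theta_{\calf_d/F,S})$. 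Hence every $g_i$ is an associate of $\theta_{\calf_d/F,S}$ and $Ch_{\Lambda(\calf_d)}(\cala(\calf_d))=(\theta_{\calf_d/F,S})^m$ with $m=\sum_i e_i\ge 1$, the bound $m\geq 1$ using that $\cala(\calf_d)$ is not pseudo-null (which is where one should record the standing hypothesis; otherwise the statement is vacuous).

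For part \textbf{3}, the plan is to combine equations \eqref{CharId2} and \eqref{Stick1} to see that the exponent $m_d$ given by part 2 does not depend on $d$. Indeed, applying part 2 at levels $d$ and $d-1$ and projecting via $\pi^d_{d-1}$,
\[
(\theta_{\calf_{d-1}/F,S})^{m_{d-1}}
= Ch_{\Lambda_{d-1}}(\cala(\calf_{d-1}))
= \pi^d_{d-1}\bigl(Ch_{\Lambda_d}(\cala(\calf_d))\bigr)
= (\theta_{\calf_{d-1}/F,S})^{m_d},
\]
and since $\theta_{\calf_{d-1}/F,S}$ is irreducible (hence a non-zero non-unit in the domain $\Lambda(\calf_{d-1})$) one concludes $m_d=m_{d-1}=:m$. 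It remains to check $\bigcap_d (\pi_{\calf_d})^{-1}\bigl((\theta_{\calf_d/F,S})^m\bigr)=(\theta_{\calf/F,S})^m$ in $\Lambda$. One inclusion is immediate from \eqref{Stick1}. For the reverse, given $x$ in the intersection, irreducibility of $\theta_{\calf_d/F,S}$ makes it a non-zero divisor in $\Lambda(\calf_d)$, so the quotients $y_d:=\pi_{\calf_d}(x)/(\theta_{\calf_d/F,S})^m$ are uniquely defined, form a compatible family under $\pi^d_{d-1}$, and hence lift to $y\in\Lambda=\liminv\Lambda(\calf_d)$ with $x=\theta_{\calf/F,S}^m y$.

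The main obstacle I expect is this last inverse-limit compatibility argument in part 3: one needs that factoring out $(\theta_{\calf_d/F,S})^m$ commutes with the transition maps, which rests on the non-zero-divisor property at each finite level. Everything else is a routine combination of Tate--Deligne, the structure theorem, and the two projective-system relations \eqref{CharId2} and \eqref{Stick1} already in place.
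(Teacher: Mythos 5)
Your proposal is correct and follows essentially the same route as the paper's terse proof: part 1 is Tate--Deligne together with the observation that $|\F_L^*|$ is a $p$-adic unit, part 2 is the structure theorem, and part 3 combines \eqref{CharId2} and \eqref{Stick1} with a passage to the limit as in Definition \ref{CharIdClGr}. Your localization-at-height-one-primes argument for part 2 and the explicit non-zero-divisor cancellation in part 3 merely supply details the paper leaves unsaid, and your remark that $m\ge 1$ presupposes $\cala(\calf_d)$ is not pseudo-null correctly flags an implicit standing hypothesis in the corollary (one which is later rendered moot by \eqref{CharIdStick}, giving $m=1$).
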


\begin{proof}
For {\bf 1} one just notes that $|\F_L^*|$ is prime with $p$. Part {\bf 2} follows from the structure theorem for torsion
$\Lambda(\calf_d)$-modules. Part {\bf 3} follows from {\bf 2} by taking limits (as in Definition \ref{CharIdClGr}) and noting that the $m$
is constant through the $\calf_d$'s because of equations \eqref{CharId2} and \eqref{Stick1}.
\end{proof}

The exponent in {\bf 2} and {\bf 3} of the corollary above is actually $m=1$.
A proof of this fact is based on the following technical result of \cite{klt} (generalized in \cite[Theorem A.1]{blt09A}). Once $\calf_d$
is fixed it is always possible to find a $\Z_p^c$-extension of $F$ containing $\calf_d$, call it $\call_d$, such that:\begin{itemize}
\item[{\bf a.}] the extension $\call_d/F$ is ramified at all primes of a finite set $\widetilde{S}$ containing $S$ (moreover
$\widetilde{S}$ can be chosen arbitrarily large);
\item[{\bf b.}] the Stickelberger element $\theta_{\call_d/F,\widetilde{S}}$ is irreducible in the Iwasawa algebra $\Lambda(\call_d)$;
\item[{\bf c.}] there is a $\Z_p$-extension $\call'$ of $F$ contained in $\call_d$ which is ramified at all primes of
$\widetilde{S}$ and such that the Stickelberger element $\theta_{\call'/F,\widetilde{S}}$ is monomial, i.e., congruent to $u(\sigma-1)^r$
modulo $(\sigma -1)^{r+1}$ (where $\sigma$ is a topological generator of $Gal(\call'/F)$ and $u\in \Z_p^*\,$).
\end{itemize}

With condition {\bf b} and an iteration of equation \eqref{CharId2} one proves that
\[ Ch_{\Lambda(\calf_d)}(\cala(\calf_d))= (\theta_{\calf_d/F,S})^m\ {\rm for\ some\ } m\geqslant 1 \ .\]
The monomiality condition {\bf c} (using $\call'$ as a first layer in a tower of $\Z_p$-extensions) leads to $m=1$
(see \cite[section 4]{klt2} or \cite[section A.1]{blt09A} which uses the possibility of varying the set $\widetilde{S}$, provided by
{\bf a}, more directly). We remark that the proof only uses the irreducibility of $\theta_{\call_d/F,\widetilde{S}}$, i.e.,
\begin{equation}\label{CharIdStick} Ch_{\Lambda(\calf_d)}(\cala(\calf_d))= (\theta_{\calf_d/F,S}) \end{equation}
holds in general for any $\calf_d\,$.

\begin{cor}[Iwasawa Main Conjecture] \label{corIMC}  In the previous setting we have
\[ \widetilde{Ch}_{\L}(\cala(\calf)) = (\theta_{\calf/F,\pr})  \ .\]
\end{cor}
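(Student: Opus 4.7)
The plan is to obtain the corollary as a direct limit passage from equation \eqref{CharIdStick}, exploiting the fact that both the finite-level characteristic ideals and the Stickelberger elements satisfy the \emph{same} compatibility under the projections $\pi^d_{d-1}$ (by \eqref{CharId2} and \eqref{Stick1} respectively). All the hard work -- the structure theorem input, the Kueh--Lai--Tan irreducibility/monomiality argument for an auxiliary $\Z_p^c$-extension, and the reduction $m=1$ -- has already been carried out to prove \eqref{CharIdStick} at every finite layer; what remains is an essentially formal assembly.

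Concretely, I would first note that $(\theta_{\calf_d/F,\pr})_d$ is an inverse system in the rings $\Lambda(\calf_d)$ by \eqref{Stick1}, hence defines an element $\theta_{\calf/F,\pr}\in\Lambda=\il{\calf_d}\Lambda(\calf_d)$ with $\pi_{\calf_d}(\theta_{\calf/F,\pr})=\theta_{\calf_d/F,\pr}$ for all $d$. Substituting \eqref{CharIdStick} into Definition \ref{CharIdClGr} rewrites the target as
\[ \widetilde{Ch}_{\L}(\cala(\calf))=\bigcap_{\calf_d}(\pi_{\calf_d})^{-1}\big((\theta_{\calf_d/F,\pr})\big). \]
The inclusion $(\theta_{\calf/F,\pr})\subseteq\widetilde{Ch}_{\L}(\cala(\calf))$ is then immediate, since for any $\lambda\in\Lambda$ one has $\pi_{\calf_d}(\lambda\theta_{\calf/F,\pr})=\pi_{\calf_d}(\lambda)\theta_{\calf_d/F,\pr}\in(\theta_{\calf_d/F,\pr})$.

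For the reverse inclusion I would take $x\in\widetilde{Ch}_{\L}(\cala(\calf))$ and, for each $d$, write $\pi_{\calf_d}(x)=a_d\,\theta_{\calf_d/F,\pr}$ with $a_d\in\Lambda(\calf_d)$. Since $\Lambda(\calf_d)\cong\Z_p[[T_1,\dots,T_d]]$ is a domain and $\theta_{\calf_d/F,\pr}$ generates the nonzero characteristic ideal of the torsion module $\cala(\calf_d)$, it is a non-zero-divisor; hence $a_d$ is uniquely determined by $x$. Applying $\pi^d_{d-1}$ and invoking \eqref{Stick1} gives $\pi_{\calf_{d-1}}(x)=\pi^d_{d-1}(a_d)\,\theta_{\calf_{d-1}/F,\pr}$, and uniqueness forces $\pi^d_{d-1}(a_d)=a_{d-1}$. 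The family $(a_d)$ therefore defines an element $a\in\Lambda$ with $a\theta_{\calf/F,\pr}$ projecting to $\pi_{\calf_d}(x)$ for every $d$, so $x=a\theta_{\calf/F,\pr}$ and the corollary follows.

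The only delicate point in this program, and the step I would flag as the main obstacle, is the compatibility argument for the cofactors $a_d$ in the last paragraph: because $\Lambda$ is non-Noetherian, one cannot simply invoke a divisibility-in-$\Lambda$ statement and must instead work level by level. The non-zero-divisor property of $\theta_{\calf_d/F,\pr}$ in each $\Lambda(\calf_d)$ is what makes the argument go through cleanly, and it is automatic from \eqref{CharIdStick} together with the fact that each $\Lambda(\calf_d)$ is a regular local domain.
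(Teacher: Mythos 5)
Your proposal is correct and takes the same route as the paper; the paper's proof is a one-line "take the limit in both sides" after citing \eqref{CharIdStick}, and your argument simply supplies the formal verification of that limit passage (the non-zero-divisor property of $\theta_{\calf_d/F,\pr}$ in the regular local domain $\Lambda(\calf_d)$, the compatibility $\pi^d_{d-1}(a_d)=a_{d-1}$ via \eqref{Stick1}, and the identification $\Lambda=\varprojlim\Lambda(\calf_d)$) that the authors leave implicit.
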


\begin{proof} From the main result of \cite{klt2}, one has that
\[ Ch_{\L(\calf_d)}(\cala(\calf_d)) = (\theta_{\calf_d/F,\pr}) \]
and we take the limit in both sides.
\end{proof}

\begin{rem}\label{RemBLT}
The equality between characteristic ideals and ideals generated by Stickelberger elements has been proved by K.-L. Kueh, K. F. Lai and
K.-S. Tan (\cite{klt2}) and by D. Burns (\cite{blt09} and the Appendix coauthored with K.F. Lai and K-S. Tan \cite{blt09A}) in a more
general situation. The $\Z_p^d$-extension they consider has to be unramified outside a finite set $S$ of primes of $F$ (but there is no need
for the primes to be totally ramified). Moreover they require that none of the primes in $S$ is totally split (otherwise
$\theta_{\calf_d/F,S}=0$). The strategy of the proof is basically the same but, of course, many technical details are simplified by our choice
of having just one (totally) ramified prime (just compare, for example, Lemma \ref{supppL} with \cite[Lemma 3.3 and 3.4]{klt2}). Moreover,
going back to the Fitting vs. characteristic ideal situation, it is worth noticing that Burns proves that the first cohomology group of
certain complexes (strictly related to class groups, see \cite[Proposition 4.4]{blt09} and \cite[section A.1]{blt09A}) are of projective
dimension 1 (\cite[Proposition 4.1]{blt09}). In this case the Fitting and characteristic ideals are known to be equal to the inverse of
the Knudsen-Mumford determinant (the ideal by which all the results of \cite{blt09} are formulated).
\end{rem}

\subsection{Characteristic $p$ $L$-functions} \label{s:gosszeta} One of the most fascinating aspects of function field arithmetic is the
existence, next to complex and $p$-adic $L$-functions, of their characteristic $p$ avatars. For a thorough introduction the reader is referred
to \cite[Chapter 8]{goss}: here we just provide a minimal background.

Recall our fixed place $\infty$ and let $\dc$ denote the completion of an algebraic closure of $F_\infty$. Already Carlitz had studied
a characteristic $p$ version of the Riemann zeta function, defined on $\N$ and taking values in $\dc$ (we will say more about it in section
\ref{s:berncarl}). More recently Goss had the intuition that, like complex and $p$-adic $L$-functions have as their natural domains
respectively the complex and the $p$-adic (quasi-)characters of the Weil group, so one could consider $\dc$-valued characters. In particular,
the analogue of the complex plane as domain for the characteristic $p$ $L$-functions is $S_\infty:=\dc^*\times\Z_p$, that can be seen as a group
of $\dc$-valued homomorphisms on $F_\infty^*$, just as for $s\in\C$ one defines $x\mapsto x^s$ on $\R^+$. The additive group $\Z$ embeds
discretely in $S_\infty$. Similarly to the classical case, one can define $L(\rho,s)$ for $\rho$ a compatible system of $v$-adic representation
of $G_F$ ($v$ varying among places different from $\infty$) by Euler products converging on some ``half-plane'' of $S_\infty$.

The theory of zeta values in characteristic $p$ is still quite mysterious and at the moment we can at best speculate that there are links with
the Iwasawa theoretical questions considered in this paper\,\footnote{This field is in rapid evolution.
After this paper was written, L. Taelman introduced some important new ideas: see  \cite{Tael1} and \cite{Tael2}.
Further recent developments can be found in \cite{ABBL}, \cite{AP}, \cite{AP2}, \cite{AT}, \cite{AT2} and \cite{Pel}.}.
To the best of our knowledge, the main results available in this direction are
the following. Let $F(\pr)/F$ be the extension obtained from the $\pr$-torsion of a Drinfeld-Hayes module (in the simplest case, $F(\pr)$ is
the $F_1$ we are going to introduce in section \ref{CycNot}). Goss and Sinnott have studied the isotypic components of $\cala(F(\pr))$ and shown
that they are non-zero if and only if $\pr$ divides certain characteristic $p$ zeta values: see \cite[Theorem 8.14.4]{goss} for a precise
statement. Note that the proof given in \cite{goss}, based on a comparison between the reductions of a $p$-adic and a characteristic $p$
$L$-function respectively mod $p$ and mod $\pr$ (\cite[Theorem 8.13.3]{goss}), makes use of Crew's result.
Okada \cite{Okada} obtained a result of similar
flavor for the class group of the ring of ``integers'' of $F(\pr)$ when $F$ is the rational function field, and Shu \cite{shu}
extended it to any $F\,$; since Okada's result is strictly related with the subject of section \ref{s:berncarl} below,
we will say more about it there.

\section{Cyclotomy by the Carlitz module} \label{s:carlitz}

\subsection{Setting} \label{CycNot} From now on we take $F=\F_q(T)$ and let $\infty$ be the usual place at infinity, so that the ring of
elements regular outside $\infty$ is $A:=\F_q[T]$; this allows a number of simplifications, leaving intact the main aspects of the theory.
The ``cyclotomic'' theory of function fields is obtained via Drinfeld-Hayes modules: in the setting of the rational function field the only
one is the Carlitz module $\Phi\colon A\rightarrow A\{\tau\}$, $T\mapsto\Phi_T:=T+\tau$ (here $\tau$ denotes the operator $x\mapsto x^q$ and,
if $R$ is an $\F_p$-algebra, $R\{\tau\}$ is the ring of skew polynomials with coefficients in $R$: multiplication in $R\{\tau\}$ is given
by composition).

We also fix a prime $\pr\subset A$ and let $\pi\in A$ be its monic generator. In order to underline the fact that $A$ and its completion at
$\pr$ play the role of $\Z$ and $\Z_p$ in the Drinfeld-Hayes cyclotomic theory, we will often use the alternative notation $A_\pr$ for the
ring of local integers $\ol_\pr\subset F_\pr$. Let $\cp$ be the completion of an algebraic closure of $F_\pr$.

As usual, if $I$ is an ideal of $A$, $\Phi[I]$ will denote the $I$-torsion of $\Phi$ (i.e., the common zeroes of all $\Phi_a$, $a\in I$).
One checks immediately that if $\iota$ is the unique monic generator of $I$, then
$$\Phi_\iota(x)=\prod_{u\in\Phi[I]}(x-u)\ .$$
We put
$$F_n:=F(\Phi[\pr^n])$$ and
$$K_n:=F_\pr(\Phi[\pr^n])\,.$$
As stated in section \ref{notation1}, we think of the $F_n$'s as subfields of $\cp$, so that the $K_n$'s are their topological closures. We shall denote
the ring of $A$-integers in $F_n$ by $B_n$ and its closure in $K_n$ by $\ol_n$, and write $\calu_n$ for the 1-units in $\ol_n$. Let
$\calf:=\cup F_n$ and $\tilde\Gamma:=Gal(\calf/F)$.

Consider the ring of formal skew power series $A_\pr\{\{\tau\}\}$: it is a complete local ring, with maximal ideal
$\pi A_\pr+A_\pr\{\{\tau\}\}\tau$. It is easy to see that $\Phi$ extends to a continuous homomorphism
$\Phi\colon A_\pr\rightarrow A_\pr\{\{\tau\}\}$ (i.e., a formal Drinfeld module) and this allows to define a ``cyclotomic"
character $\chi\colon\tilde\Gamma\stackrel{\sim}{\longrightarrow}A_\pr^*$. More precisely, let $T_\pr\Phi:=\liminv \Phi[\pr^n]$
(the limit is taken with respect to $x\mapsto\Phi_{\pi}(x)$) be the Tate module of $\Phi$. The ring $A_\pr$ acts on $T_\pr\Phi$ via
$\Phi$, i.e., $a\cdot(u)_n:=(\Phi_a(u_n))_n$, and the character $\chi$ is defined by $\sigma u=:\chi(\sigma)\cdot u$, i.e., $\chi(\sigma)$
is the unique element in $A_\pr^*$ such that $\Phi_{\chi(\sigma)}(u_n)=\sigma u_n$ for all $n$. From this it follows immediately
that $\tilde\Gamma=\Delta\times\Gamma$, where $\Delta\simeq\F_\pr^*$ is a finite group of order prime to $p$ and $\Gamma$ is the inverse
image of the 1-units.

Since $\Phi$ has rank 1, $T_\pr\Phi$ is a free $A_\pr$-module of rank 1. As in \cite{fbl}, we fix a generator $\omega=(\omega_n)_{n\geqslant 1}$:
this means that the sequence $\{\omega_n\}$ satisfies
$$\Phi_{\pi^n}(\omega_n)=0\neq\Phi_{\pi^{n-1}}(\omega_n)\quad{\rm and}\quad\Phi_\pi(\omega_{n+1})=\omega_n\ .$$

By definition $K_n=F_\pr(\omega_n)$. By Hayes's theory, the minimal polynomial of $\omega_n$ over $F$ is Eisenstein: it follows that the
extensions $F_n/F$ and $K_n/F_\pr$ are totally ramified, $\omega_n$ is a uniformizer for the field $K_n$,
$\ol_n=A_\pr[[\omega_n]]=A_\pr[\omega_n]$. The extension $F_n/F$ is unramified at all other finite places: this can be seen directly by
observing that $\Phi_{\pi^n}$ has constant coefficient $\pi^n$. Furthermore $F_n/F$ is tamely ramified at $\infty$ with inertia
group $I_\infty(F_n/F)\simeq\F_q^*\,$.

The similarity with the classical properties of $\Q(\zeta_{p^n})/\Q$ is striking.\bigskip

The formula $N_{F_{n+1}/F_n}(\omega_{n+1})=\omega_n$ shows that the $\omega_n$'s form a compatible system under the norm maps (the proof
is extremely easy; it can be found in \cite[Lemma 2]{fbl}). This and the observation that $[F_{n+1}:F_n]=q^{\deg(\pr)}$ for $n\geqslant 1$
imply
\begin{equation} \label{limK} \liminv K_n^*=\omega^{\Z}\times\F_\pr^*\times\liminv\calu_n\ . \end{equation}
Note that $\liminv\calu_n$ is a $\tilde\L$-module.

\subsection{Coleman's theory} A more complete discussion and proofs of results in this section can be found in \cite[\S3]{fbl}. Let $R$ be a
subring of $\cp$: then, as usual, $R((x)):=R[[x]](x^{-1})$ is the ring of formal Laurent series with coefficients in $R$. Moreover, following
\cite{col} we define $R[[x]]_1$ and $R((x))_1$ as the subrings consisting of those (Laurent) power series which converge on the punctured open
ball
$$B':=B(0,1)-\{0\}\subset\cp\ .$$
The rings $R[[x]]_1$ and $R((x))_1$ are endowed with a structure of topological $R$-algebras, induced by the family of seminorms
$\{\|\cdot\|_r\}$, where $r$ varies in $|\cp|\cap(0,1)$ and $\|f\|_r:=\sup\{|f(z)|:|z|=r\}$.\bigskip

All essential ideas for the following two theorems are due to Coleman \cite{col}.

\begin{thm} \label{teo4col} There exists a unique continuous homomorphism
$$\caln:F_\pr((x))_1^*\rightarrow F_\pr((x))_1^*$$
such that
$$\prod_{u\in\Phi[\pr]}f(x+u)=(\caln f)\circ\Phi_\pi \ .$$
\end{thm}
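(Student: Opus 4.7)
The plan is to construct $\caln f$ as a Galois descent of $\prod_{u \in \Phi[\pr]} f(x+u)$ along the map $y = \Phi_\pi(x)$, using crucially the additivity of the Drinfeld action, which gives $\Phi_\pi(x+u) = \Phi_\pi(x) + \Phi_\pi(u) = \Phi_\pi(x)$ for every $u \in \Phi[\pr]$.

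\emph{Uniqueness.} The substitution $y \mapsto \Phi_\pi(x)$ induces an injection $F_\pr((y))_1 \hookrightarrow F_\pr((x))_1$, since $\Phi_\pi(x) = \pi x + O(x^2)$ has a simple zero at $0$ and so one may recover $g$ from $g\circ\Phi_\pi$ by a Newton-iteration inverse of $\Phi_\pi$. This forces the relation $(\caln f)\circ\Phi_\pi = \prod_u f(x+u)$ to pin down $\caln f$ uniquely.

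\emph{Existence.} Set $F(x) := \prod_{u \in \Phi[\pr]} f(x+u) \in F_\pr(\Phi[\pr])((x))_1^*$. Then $F$ is invariant under the translation $x \mapsto x+v$ for every $v \in \Phi[\pr]$ (since $v + \Phi[\pr] = \Phi[\pr]$) and under the action of $\Delta := Gal(F_\pr(\Phi[\pr])/F_\pr)$ on coefficients (since $\Delta$ permutes $\Phi[\pr]$ and the product is symmetric). Additivity of the Drinfeld action provides a factorization $\Phi_\pi(X) - y = \prod_{u \in \Phi[\pr]}(X - (x_0(y) + u))$ in $F_\pr(\Phi[\pr])((y))[X]$, where $x_0(y) = \pi^{-1}y + O(y^2)$ is the formal root obtained by inverting $\Phi_\pi$. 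Hence $F_\pr(\Phi[\pr])((x))/F_\pr((y))$ is a Galois extension with group $\Phi[\pr] \rtimes \Delta$ and fixed field $F_\pr((y))$, so $F$ lies in $F_\pr((y))$, and we take $\caln f$ to be this invariant.

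\emph{Convergence, multiplicativity, continuity.} Multiplicativity of $\caln$ is immediate from the defining product, and continuity with respect to the topology generated by the seminorms $\|\cdot\|_r$ follows from the finiteness of $\Phi[\pr]$ together with the continuity of each translation on $F_\pr((x))_1$. For the convergence of $\caln f$ itself on $B'$, one notes that $F$ extends to a meromorphic function on the open unit disk, regular off $\Phi[\pr]$ and with a pole of bounded order at each such point, while $\Phi_\pi$ is a surjective finite rigid-analytic endomorphism of degree $q^{\deg\pr}$ of the open unit disk, ramified only above $y=0$. The main obstacle is precisely this last step -- lifting the formal descent into the convergent setting $F_\pr((y))_1$ -- which requires a Weierstrass-type preparation analysis of $\Phi_\pi$ on annuli, as carried out by Coleman in \cite{col}.
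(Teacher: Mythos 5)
The paper does not actually prove this theorem: it is stated with the remark that ``all essential ideas\dots are due to Coleman'' and a pointer to \cite{col} and to \cite[\S3]{fbl}, so there is no in-text proof to compare against. Your reconstruction follows the standard route, and it is essentially Coleman's. The translation- and $\Delta$-invariance of $\prod_{u}f(x+u)$ together with the identification of $Gal\bigl(F_\pr(\Phi[\pr])((x))/F_\pr((y))\bigr)$ with $\Phi[\pr]\rtimes\Delta$ correctly produces $\caln f$ as a formal Laurent series in $y=\Phi_\pi(x)$, and the uniqueness and multiplicativity arguments via injectivity of $g\mapsto g\circ\Phi_\pi$ are sound. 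You also correctly isolate where the real work lies: showing that the descended series lands in $F_\pr((y))_1^*$, not merely in $F_\pr((y))$; this is precisely what Coleman's Weierstrass-type analysis of $\Phi_\pi$ on sub-annuli of $B(0,1)$ accomplishes, and deferring it to \cite{col} is consistent with the paper's own treatment. One point worth tightening if you ever flesh this out: the product $\prod_u f(x+u)$ is \emph{not} an element of $F_\pr(\Phi[\pr])((x))_1^*$ -- for $f$ with a pole or zero at $0$ it acquires poles or zeros at every point of $\Phi[\pr]$ -- so the object you are descending lives in the ring of rigid meromorphic functions on $B(0,1)$ with singularities confined to $\Phi[\pr]$, and one must show that $g\mapsto g\circ\Phi_\pi$ maps $F_\pr((y))_1^*$ isomorphically onto the $\Gamma$-invariants of that ring; stating this explicitly is what converts your ``$\Phi_\pi$ is a finite rigid-analytic endomorphism ramified only over $0$'' heuristic into an actual argument.
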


\begin{thm} \label{teo8} The evaluation map $ev:f\mapsto\{f(\omega_n)\}$ gives an isomorphism
$$(A_\pr((x))^{\ast})^{\caln=id}\simeq\liminv K_n^{\ast}$$
where the inverse limit is taken with respect to the norm maps.
\end{thm}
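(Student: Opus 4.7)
The plan is to identify both sides of the proposed isomorphism through the decomposition \eqref{limK} and reduce to a core statement about one-units, where injectivity will follow from Weierstrass preparation and surjectivity from a Coleman-style interpolation.

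First I would check that $ev$ sends $\caln$-invariant elements into $\liminv K_n^*$. Using that $Gal(K_{n+1}/K_n)$ acts on $\omega_{n+1}$ by translation by $\Phi[\pr]$, the defining identity of $\caln$ from Theorem \ref{teo4col} evaluated at $x=\omega_{n+1}$ gives
$$f(\omega_n)=(\caln f)(\Phi_\pi(\omega_{n+1}))=\prod_{u\in\Phi[\pr]}f(\omega_{n+1}+u)=N_{K_{n+1}/K_n}(f(\omega_{n+1}))$$
whenever $\caln f=f$. Next, every $f\in A_\pr((x))^*$ factors uniquely as $f=x^k\cdot a\cdot h$ with $k\in\Z$, $a\in A_\pr^*$ and $h\in 1+xA_\pr[[x]]$. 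The identity $\prod_{u\in\Phi[\pr]}(x+u)=\Phi_\pi(x)$ yields $\caln(x)=x$, while a direct computation gives $\caln(a)=a^{q^{\deg\pr}}$, so the $\caln$-invariant constants are exactly the Teichm\"uller lifts of $\F_\pr^*$. Under $ev$ the three factors $x^k$, $a$, $h$ map respectively into the $\omega^\Z$, $\F_\pr^*$ and $\liminv\calu_n$ summands of \eqref{limK}. Consequently the theorem reduces to showing that
$$ev\colon (1+xA_\pr[[x]])^{\caln=id}\stackrel{\sim}{\longrightarrow}\liminv\calu_n$$
is a bijection.

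For injectivity, suppose $h\in 1+xA_\pr[[x]]$ satisfies $h(\omega_n)=1$ for every $n\ge 1$. Then $h-1\in xA_\pr[[x]]$ vanishes at the infinitely many distinct points $\omega_n\in B(0,1)\subset\cp$ (distinctness holds because $|\omega_n|_\cp$ is strictly increasing in $n$, since $\omega_n$ is a uniformizer of the totally ramified extension $K_n/F_\pr$). If $h-1\neq 0$, write $h-1=\pi^m g$ with $g\not\equiv 0\pmod{\pi}$ and apply Weierstrass preparation to express $g$ as a unit in $A_\pr[[x]]$ times a distinguished polynomial of some finite degree $d$. The zeros of $h-1$ in $B(0,1)$ then coincide with the (at most $d$) zeros of this polynomial, a contradiction. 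Hence $h=1$.

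Surjectivity is the main obstacle. Given $(u_n)\in\liminv\calu_n$, the Eisenstein property of the minimal polynomial of $\omega_n$ gives $\ol_n=A_\pr[\omega_n]$, so one may choose polynomial lifts $g_n\in A_\pr[x]$ with $g_n(\omega_n)=u_n$. Set $f_k:=\caln^k(g_k)$; iterating the identity $(\caln F)(\omega_m)=N_{K_{m+1}/K_m}(F(\omega_{m+1}))$ and using the norm-coherence of $(u_n)$ yields $f_k(\omega_m)=u_m$ for all $1\le m\le k$. The technical heart of the proof is to show that, for suitable choices of the $g_n$, the sequence $(f_k)$ converges $(\pi,x)$-adically in $A_\pr[[x]]$ to a limit $h\in 1+xA_\pr[[x]]$: this rests on a contractivity property of $\caln$ on the 1-units, which can be extracted either by computing the effect of $\caln$ modulo powers of $\pi$ and of $x$, or by passing to the logarithm $\log\colon 1+xA_\pr[[x]]\to xA_\pr[[x]]$ and analysing the resulting linear operator. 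Once such a limit $h$ is produced, $h(\omega_n)=u_n$ for every $n$, and since $\caln h$ then has the same image under $ev$ as $h$, the injectivity just established forces $\caln h=h$ automatically.
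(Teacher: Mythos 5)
Your framework is in the right spirit and several steps are sound (the verification that $\caln$-invariance forces norm-compatibility, and the injectivity argument via Weierstrass preparation), but there are two genuine gaps. Note also that the paper does not prove Theorem \ref{teo8}; it refers to \cite[\S3]{fbl}, which carries out a Coleman-style argument.

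The first gap is in the reduction: $\caln$ does \emph{not} preserve the subgroup $1+xA_\pr[[x]]$. Setting $x=0$ in the defining identity of $\caln$ gives $(\caln h)(0)=\prod_{u\in\Phi[\pr]}h(u)$, which for $h\in 1+xA_\pr[[x]]$ is a $1$-unit of $A_\pr$ but is in general $\neq 1$. Consequently, if $f=x^k a h$ with $a\in A_\pr^*$ and $h\in 1+xA_\pr[[x]]$ is $\caln$-fixed, one cannot conclude that $a$ is a Teichm\"uller representative and that $h$ is $\caln$-fixed: multiplicativity of $\caln$ only yields $\caln h=a^{1-q^{\deg\pr}}h$. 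So $(A_\pr((x))^*)^{\caln=id}$ is not $x^\Z\times\F_\pr^*\times(1+xA_\pr[[x]])^{\caln=id}$. The $\caln$-stable decomposition to use is $A_\pr((x))^*=x^\Z\times\mu\times W$, where $\mu$ denotes the Teichm\"uller representatives of $\F_\pr^*$ and $W:=\{g\in A_\pr[[x]]^*:g(0)\in 1+\pi A_\pr\}$; the core statement then becomes $W^{\caln=id}\simeq\liminv\calu_n$, to which your Weierstrass-preparation injectivity argument adapts verbatim.

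The second gap is in surjectivity, and it is the substantive one. Your indexing is off: from $(\caln F)(\omega_m)=N_{K_{m+1}/K_m}(F(\omega_{m+1}))$ one gets $(\caln^k g_k)(\omega_m)=N_{K_{m+k}/K_m}(g_k(\omega_{m+k}))$, and since $g_k$ is only required to interpolate $u_k$ at $\omega_k$ this is not $u_m$ for $1\le m<k$. What holds is $(\caln^{k-m}g_k)(\omega_m)=N_{K_k/K_m}(u_k)=u_m$, so the sequence one should consider is $\caln^{k-n_0}(g_k)$ for each fixed level $n_0$. More importantly, the entire content of Coleman's theorem is the convergence of this sequence in $A_\pr[[x]]$, which rests on an explicit contraction estimate: one must show that if $g_1\equiv g_2\pmod{(\pi,x)^m}$ (with $g_1,g_2\in W$ agreeing mod $\pi$) then $\caln g_1\equiv\caln g_2\pmod{(\pi,x)^{m+1}}$, or an analogous statement. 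You call this ``the technical heart'' and correctly locate it, but you only assert it; as written the surjectivity is not established. This estimate is precisely what is proved in \cite{col} and transported to the Carlitz setting in \cite[\S3]{fbl}.
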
 \index{Coleman power series}

We shall write $Col_u$ for the power series in $A_\pr((x))^{\ast}$ associated to $u\in\liminv K_n^{\ast}$ by Coleman's isomorphism of
Theorem \ref{teo8}.

\begin{rem} \label{Phiacol} An easily obtained family of $\caln$-invariant power series is the following. Let $a\in A_\pr^*$: then
$$\prod_{u\in\Phi[\pr]}\Phi_a(x+u)=\prod_{u\in\Phi[\pr]}(\Phi_a(x)+\Phi_a(u))=\Phi_\pi(\Phi_a(x))$$
(since $\Phi_a$ permutes elements in $\Phi[\pr]$) and, from $\Phi_\pi\Phi_a=\Phi_a\Phi_\pi$ in $A_\pr\{\{\tau\}\}$, it follows that
$\Phi_a(x)$ is invariant under the Coleman norm operator $\caln$ (as observed in \cite[page 797]{fbl}, this just amounts to replacing
$\omega$ with $a\cdot\omega$ as generator of the Tate module). \end{rem}

Following \cite{col}, we define an action of $\Gamma$ on $F_\pr[[x]]_1$ by $(\sigma*f)(x):=f(\Phi_{\chi(\sigma)}(x))$. Then
$Col_{\sigma u}=(\sigma*Col_u)$, as one sees from
\begin{equation} \label{sigmacol} (\sigma*Col_u)(\omega_n)=Col_u(\Phi_{\chi(\sigma)}
(\omega_n))=Col_u(\sigma\omega_n)=\sigma(Col_u(\omega_n))=\sigma(u_n)\ . \end{equation}

\subsection{The Coates-Wiles homomorphisms} We introduce some operators on power series. Let
$\dlog\colon F_\pr((x))_1^{\ast}\rightarrow F_\pr((x))_1$ be the logarithmic derivative, i.e., $\dlog(g):=\frac{g'}{g}$.
Also, for any $j\in\N$ let $\Delta_j\colon F_\pr((x))\rightarrow F_\pr((x))$ be the $j$th Hasse-Teichm\"uller derivative,
defined by the formula
$$\Delta_j\left(\sum_{n=0}^{\infty}c_nx^n\right):=\sum_{n=0}^{\infty}\begin{pmatrix} n+j\\
j\end{pmatrix}c_{n+j}x^n$$
(i.e., $\Delta_j$ ``is'' the differential operator $\frac{1}{j!}\frac{{\rm d}^j}{{\rm d}x^j}$). A number of properties of the
Hasse-Teichm\"uller derivatives can be found in \cite{jks}; here we just recall that the operators $\Delta_j$ are $F_\pr$-linear and that
\begin{equation} \label{eq:deltaserie} f(x)=\sum_{j=0}^\infty \Delta_j(f)_{|x=0}\,x^j \ . \end{equation}
 The last operator we need to introduce is composition with the Carlitz exponential $e_C(x)=x+\dots$, i.e., $f\mapsto f(e_C(x))$.

\begin{df} \index{Coates-Wiles homomorphism} For any integer $k\geqslant 1$, define the $k$th Coates-Wiles
homomorphism $\delta_k\colon\liminv\ol_n^*\rightarrow F_\pr$ by
$$\delta_k(u):=\Delta_{k-1}\big((\dlog Col_u)(e_C(x))\big)_{|x=0}=\big(\Delta_{k-1}((\dlog Col_u)\circ e_C)\big)(0)\ .$$
\end{df}

Notice that by \eqref{eq:deltaserie} this is equivalent to putting
\begin{equation} \label{eq:deltaexp} (\dlog Col_u)(e_C(x))=\sum_{k=1}^\infty \delta_k(u)x^{k-1}\ . \end{equation}

\begin{lem} The Coates-Wiles homomorphisms satisfy
$$\delta_k(\sigma u)=\chi(\sigma)^k\delta_k(u)\ .$$
\end{lem}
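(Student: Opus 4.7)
The plan is to combine three ingredients: the Galois equivariance of the Coleman map (the relation $Col_{\sigma u}=\sigma*Col_u$ coming from \eqref{sigmacol}), the fact that $\dlog$ turns the substitution $f\mapsto f\circ\Phi_{\chi(\sigma)}$ into multiplication by $\chi(\sigma)$ (because $\Phi_a'(x)=a$ in characteristic $p$), and the defining functional equation $\Phi_a\circ e_C=e_C\circ(a\cdot)$ of the Carlitz exponential. Together these let one evaluate $(\dlog Col_{\sigma u})\circ e_C$ as a simple rescaling of $(\dlog Col_u)\circ e_C$, whence the claim follows by comparing $x^{k-1}$-coefficients.

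First I would compute $\dlog Col_{\sigma u}$. By \eqref{sigmacol} we have $Col_{\sigma u}(x)=Col_u(\Phi_{\chi(\sigma)}(x))$, so the chain rule gives
\[
\dlog Col_{\sigma u}(x)=\Phi_{\chi(\sigma)}'(x)\cdot(\dlog Col_u)\bigl(\Phi_{\chi(\sigma)}(x)\bigr).
\]
Now $\Phi_{\chi(\sigma)}(x)=\chi(\sigma)x+\sum_{i\ge 1}c_i x^{q^i}$ in $A_\pr\{\{\tau\}\}$, and since $\frac{d}{dx}x^{q^i}=0$ for $i\ge 1$ in characteristic $p$, one has $\Phi_{\chi(\sigma)}'(x)=\chi(\sigma)$. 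Hence
\[
\dlog Col_{\sigma u}(x)=\chi(\sigma)\,(\dlog Col_u)\bigl(\Phi_{\chi(\sigma)}(x)\bigr).
\]

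Next I would substitute $x\mapsto e_C(x)$ and invoke the key identity $\Phi_a(e_C(x))=e_C(ax)$, which holds for $a\in A$ by the very definition of the Carlitz module as the $\F_q$-linear lift of $T\mapsto T+\tau$ through $e_C$, and extends by continuity to all $a\in A_\pr$. Applying this with $a=\chi(\sigma)$ yields
\[
(\dlog Col_{\sigma u})(e_C(x))=\chi(\sigma)\,(\dlog Col_u)\bigl(e_C(\chi(\sigma)x)\bigr).
\]
Finally, using the expansion \eqref{eq:deltaexp},
\[
(\dlog Col_u)(e_C(\chi(\sigma)x))=\sum_{k\ge 1}\delta_k(u)\,\chi(\sigma)^{k-1}x^{k-1},
\]
so multiplying by $\chi(\sigma)$ and reading off the coefficient of $x^{k-1}$ gives $\delta_k(\sigma u)=\chi(\sigma)^k\delta_k(u)$.

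The only point requiring a little care is the extension of $\Phi_a\circ e_C=e_C\circ(a\cdot)$ from $a\in A$ to $a\in A_\pr$, but this is standard and follows from the continuity of $\Phi$ (as a formal Drinfeld module, as recalled in Section \ref{CycNot}) together with the density of $A$ in $A_\pr$; the $\F_q$-linearity of $e_C$ makes both sides $\F_q$-linear and continuous in $a$. Once this is in hand the argument is a mechanical chain-rule and power-series expansion, so I do not expect any serious obstacle.
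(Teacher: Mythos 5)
Your proof is correct and follows exactly the same route as the paper's: the Galois equivariance $Col_{\sigma u}=Col_u\circ\Phi_{\chi(\sigma)}$ from \eqref{sigmacol}, the chain rule for $\dlog$ together with $\Phi_a'(x)=a$, the identity $\Phi_a\circ e_C=e_C\circ(a\cdot)$, and finally comparison of coefficients via \eqref{eq:deltaexp}. Your extra remark on extending $\Phi_a(e_C(x))=e_C(ax)$ from $A$ to $A_\pr$ by continuity is a sensible clarification that the paper leaves implicit.
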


\begin{proof} Recall that $\frac{{\rm d}}{{\rm d}x}\Phi_a(x)=a$ for any $a\in A_\pr$. Then from \eqref{sigmacol} it follows
$$\dlog Col_{\sigma u}=\dlog(Col_u\circ\Phi_{\chi(\sigma)})=\chi(\sigma)(\dlog Col_u)\circ\Phi_{\chi(\sigma)}\ ,$$
since $\dlog(f\circ g)=g'(\frac{f'}{f}\circ g)$.
Composing with $e_C$ and using $\Phi_a(e_C(x))=e_C(ax)$, one gets, by \eqref{eq:deltaexp},
$$(\dlog Col_{\sigma u})(e_C(x))=\chi(\sigma)(\dlog Col_u)(e_C(\chi(\sigma)x))=
\chi(\sigma)\sum_{k=1}^\infty\delta_k(u)\chi(\sigma)^{k-1}x^{k-1}\ .$$
The result follows.
\end{proof}

\subsection{Cyclotomic units} \label{s:cycunit}

\begin{df} \index{Cyclotomic units} The group $C_n$ of cyclotomic units in $F_n$ is the intersection of $B_n^*$ with the subgroup of
$F_n^*$ generated by $\sigma(\omega_n)$, $\sigma\in Gal(F_n/F)$.
\end{df}

By the explicit description of the Galois action via $\Phi$, one sees immediately that this is the same
as $B_n^*\cap\langle\Phi_a(\omega_n)\rangle_{a\in A-\pr}\,$.

\begin{lem} \label{l:augm} Let $\sum c_\sigma\sigma$ be an element in $\Z[Gal(F_n/F)]$: then
$$\prod_{\sigma\in Gal(F_n/F)}\sigma(\omega_n)^{c_\sigma}\in C_n\Longleftrightarrow\sum c_\sigma=0\ .$$
\end{lem}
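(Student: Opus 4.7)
The plan is to read off the divisor (at the finite places) of $u:=\prod_\sigma \sigma(\omega_n)^{c_\sigma}$ and compare it with the defining condition of $B_n^*$.

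First I would record that $\omega_n$ is integral over $A$: since $\Phi_a(x)\in A[x]$ is monic in $x$ of degree $q^{\deg a}$ for every $a\in A$, $\omega_n$ is a root of the monic polynomial $\Phi_{\pi^n}(x)\in A[x]$. Consequently every conjugate $\sigma(\omega_n)$ lies in $B_n$, so the product $u$ belongs to $F_n^*$ and has trivial support outside the finite primes where some $\sigma(\omega_n)$ fails to be a unit.

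Next I would compute valuations. Because $F_n/F$ is totally ramified at $\pr$, there is a unique prime $\pr_n$ of $F_n$ above $\pr$; it is therefore fixed by $Gal(F_n/F)$, and since $\omega_n$ is a uniformizer at $\pr_n$ we get $v_{\pr_n}(\sigma(\omega_n))=1$ for every $\sigma$. For the other finite primes, I would note that the minimal polynomial $f_n(x)$ of $\omega_n$ over $F$ has constant term equal, up to sign, to $N_{F_n/F}(\omega_n)$. Using $\Phi_{\pi^n}=\Phi_{\pi}\circ\Phi_{\pi^{n-1}}$ together with $\Phi_\pi(y)=\pi y+y^{q^{\deg\pr}}$, one reads off that the constant term of $\Phi_{\pi^n}(x)/\Phi_{\pi^{n-1}}(x)$, and hence of $f_n(x)$, is a unit multiple of $\pi$. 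Thus $N_{F_n/F}(\omega_n)$ is a unit at every finite place of $F$ different from $\pr$; combined with the integrality of $\omega_n$, this forces $\omega_n$, and hence each $\sigma(\omega_n)$, to be a unit at every finite prime of $F_n$ different from $\pr_n$.

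Combining these two facts, $v_{\pr_n}(u)=\sum_\sigma c_\sigma$ while $v_w(u)=0$ for every other finite place $w$ of $F_n$. Therefore $u\in B_n^*$ if and only if $\sum_\sigma c_\sigma=0$. Since $u$ already belongs to the subgroup $\langle\sigma(\omega_n)\rangle_{\sigma}=\langle\Phi_a(\omega_n)\rangle_{a\in A-\pr}$ by construction, the equivalence $u\in C_n\iff\sum c_\sigma=0$ is immediate from the definition of $C_n$. The only non-routine step is the Eisenstein-type computation identifying the constant term of $f_n$ with $\pi$ up to a unit, but this is a standard consequence of Hayes's theory already invoked in Section \ref{CycNot}; so I would not expect serious obstacles here.
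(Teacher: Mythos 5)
Your argument is correct and is exactly the approach the paper has in mind: the paper's one-line proof is ``obvious from the observation that $\omega_n$ is a uniformizer for the place above $\pr$ and a unit at every other finite place of $F_n$,'' and what you do is supply a verification of that observation (integrality of $\omega_n$, the unique prime $\pr_n$ above $\pr$ is Galois-stable so all conjugates have valuation $1$ there, and $N_{F_n/F}(\omega_n)=\pm\pi$ forces $\omega_n$ to be a unit at the remaining finite places) and then read off the divisor of $u$. Since $u$ lies in $\langle\sigma(\omega_n)\rangle$ by construction, membership in $C_n$ reduces to $u\in B_n^*$, i.e.\ $v_{\pr_n}(u)=\sum c_\sigma=0$; this matches the intended argument, just with the details written out.
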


\begin{proof} Obvious from the observation that $\omega_n$ is a uniformizer for the place above $\pr$ and a unit at every other finite
place of $F_n$. \end{proof}

Let $\calc_n$ and $\calc_n^1$ denote the closure respectively of $C_n\cap\ol_n^*$ and of $C^1_n:=C_n\cap\calu_n\,$.\bigskip

Let $a\in A_\pr^*$. By Remark \ref{Phiacol} $(\Phi_a(\omega_n))_n$ is a norm compatible system: hence one can define a homomorphism
$$\Upsilon\colon\Z[\tilde\Gamma]\longrightarrow\liminv K_n^*$$
$$\sum c_\sigma\sigma\mapsto\prod\big(\sigma(\omega_n)^{c_\sigma}\big)_n=\prod\big(\Phi_{\chi(\sigma)}(\omega_n)^{c_\sigma}\big)_n\ .$$
Let $\widehat{\liminv K_n^*}$ be the $p$-adic completion of $\liminv K_n^*$. By \eqref{limK} one gets the isomorphism
$\widehat{\liminv K_n^*}\simeq\omega^{\Z_p}\times\liminv\calu_n\,$.

\begin{lem}
The restriction of $\Upsilon$ to $\Z[\Gamma]$ can be extended to $\Upsilon\colon\Lambda\rightarrow\widehat{\liminv K_n^*}\,$.
\end{lem}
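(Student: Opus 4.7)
The plan is to recognize $\Upsilon$ as the orbit map of $\omega$ under the natural Galois action, and then extend by the universal property of the Iwasawa algebra applied to a compact target. The crucial preliminary observation is that $\widehat{\liminv K_n^*}$ is a compact topological $\Z_p$-module: the isomorphism $\widehat{\liminv K_n^*}\simeq\omega^{\Z_p}\times\liminv\calu_n$ recorded just above the lemma exhibits it as a product of $\Z_p$ (since $\omega$ has infinite order) and a pro-$p$ abelian group (as $1$-units in an equal characteristic local field form a $\Z_p$-module), both compact. Compactness is the key point that powers the extension argument.

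Next, $\tilde\Gamma$ acts continuously on each $K_n^*$ by Galois, and this action is compatible with the norm maps, so it descends to a continuous action on $\liminv K_n^*$ and on its $p$-adic completion. In particular, the orbit map
$$\tilde\Gamma\longrightarrow\widehat{\liminv K_n^*}\,,\qquad\sigma\longmapsto\sigma\omega=(\Phi_{\chi(\sigma)}(\omega_n))_n\,,$$
is continuous: its $n$-th component factors through the finite quotient $Gal(F_n/F)$, since $\sigma\omega_n=\Phi_{\chi(\sigma)}(\omega_n)$ depends only on $\chi(\sigma)\bmod\pi^n$. I would then invoke the standard universal property of Iwasawa algebras—every continuous map from a profinite group $G$ to a compact $\Z_p$-module $M$ extends uniquely to a continuous $\Z_p$-linear map $\Z_p[[G]]\to M$ (viewing $M$ additively; in our setting, the source is additive and the target multiplicative)—to extend the above orbit map to a continuous $\Z_p$-linear map $\Upsilon\colon\tilde\L\to\widehat{\liminv K_n^*}$. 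Restriction to $\Lambda=\Z_p[[\Gamma]]\subset\tilde\L$ yields the desired extension.

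Agreement with the original $\Upsilon$ on $\Z[\Gamma]$ is immediate by construction: both maps send $\sigma\in\Gamma$ to $\sigma\omega$, and both are $\Z$-linear (from an additive source to a multiplicative target), so they coincide on finite $\Z$-linear combinations. The main (minor) obstacle is a careful invocation of the universal property in the present setting; this is standard via the identification of $\Z_p[[G]]$ with the algebra of $\Z_p$-valued measures on $G$, but it genuinely depends on the target being a compact $\Z_p$-module, which is precisely why the $p$-adic completion and the decomposition of the first paragraph are essential—without completion, the $\omega^{\Z}$ factor would obstruct continuous extension of the exponents along the profinite $\Gamma$.
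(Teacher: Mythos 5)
Your proof is correct and rests on the same key fact as the paper's: the $n$-th component $\sigma\mapsto\Phi_{\chi(\sigma)}(\omega_n)$ factors through a finite quotient because $\Phi_a(\omega_n)$ depends only on $a\bmod\pi^n$. The only difference is packaging: you apply the universal property of $\Z_p[[G]]$ (as a measure algebra) to the orbit map $\Gamma\to\widehat{\liminv K_n^*}$, whereas the paper extends the already-$\Z$-linear map from $\Z[\Gamma]$ by directly verifying continuity with respect to the topology $\L=\liminv(\Z/p^n)[Gal(F_n/F)]$ and invoking density; unpacking the universal property reduces to exactly that continuity-plus-density argument, so the two routes are essentially equivalent.
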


\begin{proof} If $a\in A_\pr$ is a 1-unit, then
\begin{equation} \label{e:phia} \Phi_a(\omega_n)=\omega_n u_n \end{equation}
with $u_n\in\calu_n$. Since by definition $\Gamma=\chi^{-1}(1+\pi A_\pr)$, it follows that $\Upsilon$ sends $\Z[\Gamma]$
into $\omega^\Z\times\liminv\calu_n$. To complete the proof it suffices to check that $\Upsilon$ is continuous with respect to the
natural topologies on $\L=\liminv(\Z/p^n\Z)[Gal(F_n/F)]$ and $\widehat{\liminv K_n^*}$. But $a\equiv a'\pmod{\pi^n}$ in $A_\pr$
implies $\Phi_a(\omega_j)=\Phi_{a'}(\omega_j)$ for any $j\leqslant n$ and the result follows from the continuity of $\chi$.
\end{proof}

\begin{prop} \label{p:upsilon} Let $I\subset\Lambda$ denote the augmentation ideal; then $\Upsilon$ induces a surjective homomorphism
of $\L$-modules $I\longrightarrow\liminv\calc_n^1$. The kernel has empty interior.   \end{prop}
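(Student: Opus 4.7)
I would split the argument into three parts: the containment $\Upsilon(I)\subseteq\liminv\calc_n^1$, surjectivity, and the empty interior claim.

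For the containment, by continuity of $\Upsilon$ and closedness of $\liminv\calc_n^1$ inside $\widehat{\liminv K_n^*}$, it suffices to verify it on the dense subset $I\cap\Z[\Gamma]$. A typical element $\lambda=\sum c_i\sigma_i$ (finite sum, $\sigma_i\in\Gamma$, $\sum c_i=0$) gives $\Upsilon(\lambda)_n=\prod\sigma_i(\omega_n)^{c_i}\in C_n$ by Lemma~\ref{l:augm}. Since $\chi(\sigma_i)\in 1+\pi A_\pr$ for each $\sigma_i\in\Gamma$, the relation \eqref{e:phia} gives $\sigma_i(\omega_n)=\omega_n u_n^{(i)}$ with $u_n^{(i)}\in\calu_n$; combined with $\sum c_i=0$ this yields $\Upsilon(\lambda)_n=\prod(u_n^{(i)})^{c_i}\in C_n\cap\calu_n=C_n^1$, as required.

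For surjectivity, $\Upsilon(I)$ is closed (by compactness of $I$ and continuity of $\Upsilon$), so it suffices to prove density at each finite level and then pass to the inverse limit. Via the decomposition $Gal(F_n/F)=\Delta\times\Gamma^{(n)}$, an arbitrary element of $C_n^1$ writes as $\prod(\tau_i\gamma_i)(\omega_n)^{c_i}$ with $\sum c_i=0$ and $\prod\chi(\tau_i)^{c_i}=1$ in $\F_\pr^*$. The crucial ingredient is that $|\Delta|$ is prime to $p$, so the idempotent $e_0=|\Delta|^{-1}\sum_{\tau\in\Delta}\tau\in\Z_p[\Delta]$ is well-defined and acts on the $\Z_p$-module $\calc_n^1$; the residue-$1$ condition combined with $e_0$-averaging allows one to rewrite such a product as a $\L_n$-combination of the generators $\{\gamma(\omega_n)/\omega_n:\gamma\in\Gamma\}$, whose $\L$-span is precisely the image $\Upsilon(I)_n$. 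Passing to the inverse limit is legitimate by a Mittag-Leffler argument, using the norm-compatibility $N_{F_{n+1}/F_n}(\omega_{n+1})=\omega_n$ and the surjectivity of local norms on $1$-units in totally ramified extensions to ensure that the transition maps $\calc_{n+1}^1\to\calc_n^1$ are surjective.

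For the empty interior of the kernel, it is enough (since $\ker\Upsilon$ is a subgroup) to show that every open neighborhood of $0$ in $I$ contains an element outside the kernel. Since $\Gamma\simeq\Z_p^{\N}$ admits a countable family of topological generators $\{\gamma_i\}$ with $\gamma_i\to 1$, any open neighborhood of $0$ in $I$ contains $\gamma_i-1$ for all sufficiently large $i$. But via Theorem~\ref{teo8}, $\Upsilon(\gamma_i-1)$ corresponds to the Coleman power series $\Phi_{\chi(\gamma_i)}(x)/x$, which has constant term $\chi(\gamma_i)\in 1+\pi A_\pr\setminus\{1\}$ whenever $\gamma_i\neq 1$; hence $\Upsilon(\gamma_i-1)\neq 0$ in $\liminv\calc_n^1$ for $\gamma_i\neq 1$, and no open neighborhood can lie inside $\ker\Upsilon$.

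The main obstacle is the density argument in the surjectivity step, specifically the reduction from cyclotomic $1$-units arising from the full $Gal(F_n/F)$-action to those obtainable from the $\Gamma^{(n)}$-action alone. This is where the coprimality $\gcd(|\Delta|,p)=1$ plays an essential role: one must verify that the $e_0$-averaging interacts properly with the Drinfeld-module formulas $\Phi_a(x)$ and preserves the closed $\L$-submodule generated by $\Gamma$-shifts of $\omega$, which is delicate because $\Upsilon(I)$ is a priori only $\L$-stable rather than $\tilde\L$-stable.
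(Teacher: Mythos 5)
Your treatment of the containment $\Upsilon(I)\subseteq\liminv\calc_n^1$ is correct and essentially the same as the paper's (density of $I\cap\Z[\Gamma]$ in $I$, continuity of $\Upsilon$, Lemma~\ref{l:augm} plus~\eqref{e:phia}).

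Your argument for the empty-interior claim is genuinely different from the paper's and, in fact, cleaner. The paper first proves an algebraic injectivity lemma --- a finite product $\prod\Phi_a(x)^{n_a}=1$ over monic $a\in A^+$ forces all $n_a=0$, by a count of root multiplicities --- and then invokes Chebotarev to see that $\chi^{-1}(A^+)$ is dense in $\tilde\Gamma$, concluding that $\Upsilon$ is injective on a dense subset of $I$. You instead use the topological structure of $\Gamma\simeq\Z_p^\N$ directly: coordinate generators $\gamma_i\to 1$, so any open neighbourhood of $0$ in $I$ contains $\gamma_i-1$ for $i$ large; and $\Upsilon(\gamma_i-1)$ is nontrivial because its Coleman power series $\Phi_{\chi(\gamma_i)}(x)/x$ has constant term $\chi(\gamma_i)\neq 1$, so it is not the constant series $1$ and Theorem~\ref{teo8} gives $\Upsilon(\gamma_i-1)\neq 1$. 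Both arguments are valid; yours avoids Chebotarev and the combinatorial root-multiplicity lemma entirely, at the cost of being more tailored to this specific statement (the paper's injectivity-on-a-dense-set lemma is reused elsewhere, e.g.\ to get $C_n=C_n^1\times\Upsilon(\Z[\Delta])$ in the proof of the next proposition).

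The surjectivity step, however, is a genuine gap in your proposal. The paper disposes of it in one sentence, via compactness of $I$ and the assertion that the augmentation ideal of $\Z[\Gamma]$ already maps onto $C_n^1$ at every finite level. Your plan is to bridge the discrepancy between $Gal(F_n/F)=\Delta\times\Gamma/\Gamma_n$-generated cyclotomic units and $\Gamma$-generated ones by means of the idempotent $e_0=|\Delta|^{-1}\sum_{\tau\in\Delta}\tau$. But this mechanism does not do what you need: $e_0$ projects a $\Z_p[\Delta]$-module onto its $\Delta$-invariants, and $\calc_n^1=\calc_n\cap\calu_n$ is \emph{not} the $\Delta$-invariant part of $\calc_n$ (indeed, since $K_n/F_\pr$ is totally ramified, $\Delta$ acts trivially on the Teichm\"uller part $\F_\pr^*\subset\ol_n^*$, so $e_0$ does not annihilate the non-$1$-unit component). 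You yourself flag exactly this obstruction --- $\Upsilon(I)$ is only $\L$-stable, not $\tilde\L$-stable, so there is no reason for it to be $e_0$-stable --- but you do not overcome it, and as written the surjectivity argument is a plan with an acknowledged hole rather than a proof. A further minor omission: the proposition asserts $\L$-linearity of the induced map, which the paper verifies by the computation~\eqref{e:lambdahom}; you state it in your plan but do not check it.
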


\begin{proof} From Lemma \ref{l:augm} and \eqref{e:phia} it is clear that $\Upsilon(\alpha)\in\liminv\calc_n^1$ if and only if $\alpha\in I$.
This map is surjective because $I$ is compact and already the restriction to the augmentation ideal of $\Z[\Gamma]$ is onto $C_n^1$ for all $n$.
A straightforward computation shows that it is a homomorphism of $\L$-algebras: for $\gamma\in\Gamma$
\begin{equation} \label{e:lambdahom} \gamma\Upsilon\left(\sum c_\sigma\sigma\right)=
\left(\gamma\left(\prod\Phi_{\chi(\sigma)}(\omega_n)^{c_\sigma}\right)_n\right)=
\left(\prod\Phi_{\chi(\sigma\gamma)}(\omega_n)^{c_\sigma}\right)_n \end{equation}
because $\gamma(\Phi_a(\omega_n))=\Phi_a(\gamma(\omega_n))=\Phi_a(\Phi_{\chi(\gamma)}(\omega_n))$.

For the statement about the kernel, let $A^+\subset A$ be the subset of monic polynomials and consider any function
$A^+\longrightarrow\Z$, $a\mapsto n_a\,$, such that $n_a=0$ for almost all $a$. We claim that $\prod_{a\in A^+}\Phi_a(x)^{n_a}=1$ only
if $n_a=0$ for all $a$. To see it, let $u_a$ denote a generator of the cyclic $A$-module $\Phi[(a)]$. Then $x-u_a$ divides $\Phi_b(x)$
if and only if $b\in(a)$: hence the multiplicity $m_a$ of $u_a$ as root of $\prod \Phi_a(x)^{n_a}=1$ is exactly $\sum_{b\in(a)\cap A^+}n_b\,$.
For $b\in A$, let $\varepsilon(b)$ denote the number of primes of $A$ dividing $b$ (counted with multiplicities): then a simple combinatorial
argument shows that
$$n_a=\sum_{b\in A^+}(-1)^{\varepsilon(b)}\sum_{c\in A^+}n_{abc}\ .$$
It follows that $m_a=0$ for all $a\in A^+$ if and only if $n_a=0$ for all $a$.

As in section \ref{s:stickelberger}, for $v\neq\pr,\infty$ let $Fr_v\in\tilde\Gamma$ be its Frobenius. By \cite[Proposition 7.5.4]{goss} one finds
that $\chi(Fr_v)$ is the monic generator of the ideal in $A$ corresponding to the place $v$: hence, by Chebotarev density theorem,
$\chi^{-1}(A^+)$ is dense in $\tilde\Gamma$.
Thus the isomorphism of Theorem \ref{teo8} shows that we have proved that $\Upsilon\colon I\longrightarrow\liminv\ol_n^*$
is injective on a dense subset; the kernel must have empty interior.
\end{proof}

\begin{rem} Since $I\otimes_\Z\Z[\Delta]=\oplus_{\delta\in\Delta}I\delta$, formula \eqref{e:lambdahom} shows that $\Upsilon$ can be extended
to a homomorphism of $\tilde\L$-modules $I\otimes_\Z\Z[\Delta]\longrightarrow\liminv\calc_n\,$.  \end{rem}

\begin{prop} We have: $\liminv\ol_n^*/\liminv\calc_n\simeq\liminv\calu_n/\liminv\calc_n^1\,$.
\end{prop}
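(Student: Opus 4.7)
My plan is to show that, for each $n$, the reduction-mod-uniformizer short exact sequence for $\ol_n^*$ restricts to one for $\calc_n$ with the same cokernel $\F_\pr^*$, so that snake-lemma (or direct-product) bookkeeping gives $\ol_n^*/\calc_n \simeq \calu_n/\calc_n^1$ levelwise; passage to the inverse limit then gives the statement.

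More concretely, I would first record the exact sequence
\[
1 \longrightarrow \calu_n \longrightarrow \ol_n^* \xrightarrow{\ \mathrm{red}\ } \F_\pr^* \longrightarrow 1,
\]
using that $K_n/F_\pr$ is totally ramified, so $\ol_n$ has residue field $\F_\pr$. Since $|\F_\pr^*|$ is prime to $p$ and $\calu_n$ is pro-$p$, the Teichm\"uller lift splits this canonically as $\ol_n^* = \mu \times \calu_n$ with $\mu\simeq\F_\pr^*$. Next I would intersect with $\calc_n$: the identity $\calc_n\cap\calu_n=\calc_n^1$ holds because $\F_\pr^*$ is discrete, so any $x=\lim y_k\in\calc_n$ with $x\in\calu_n$ forces $\overline{y_k}=1$ eventually, i.e., $y_k\in C_n\cap\calu_n=C_n^1$. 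Thus there is a commutative diagram
\[
\begin{array}{ccccc}
\calc_n^1 & \hookrightarrow & \calc_n & \twoheadrightarrow & \calc_n/\calc_n^1 \\
\cap & & \cap & & \downarrow \\
\calu_n & \hookrightarrow & \ol_n^* & \twoheadrightarrow & \F_\pr^*
\end{array}
\]
with the right-hand vertical map injective.

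The key step is to prove this rightmost map is surjective. For any $\sigma\in\Delta$, by Lemma \ref{l:augm} (applied to the element $\sigma-1$ of augmentation zero) the ratio $\sigma(\omega_n)/\omega_n=\Phi_{\chi(\sigma)}(\omega_n)/\omega_n$ lies in $C_n\cap\ol_n^*\subset\calc_n$; and since $\Phi_a(x)=ax+O(x^q)$ one has
\[
\sigma(\omega_n)/\omega_n \equiv \chi(\sigma)\pmod{\omega_n}.
\]
As $\sigma$ runs through $\Delta$, the isomorphism $\chi\colon\Delta\xrightarrow{\sim}\F_\pr^*$ sweeps out all of $\F_\pr^*$, giving surjectivity. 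It then follows (either by the snake lemma or by noting that the unique prime-to-$p$ torsion subgroup of $\ol_n^*$ is $\mu$, hence $\calc_n=\mu\times\calc_n^1$) that there is a natural isomorphism
\[
\ol_n^*/\calc_n \xrightarrow{\ \sim\ } \calu_n/\calc_n^1.
\]

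Finally, I would observe that the whole picture is compatible with the norm maps: norms send cyclotomic units to cyclotomic units, and because the residue extensions $\F_\pr/\F_\pr$ are trivial, the norm acts as the identity on $\mu$ (equivalently, it is $\zeta\mapsto\zeta^{q^{\deg\pr}}=\zeta$). Hence $\liminv\ol_n^*=\mu\times\liminv\calu_n$ and $\liminv\calc_n=\mu\times\liminv\calc_n^1$, so the Teichm\"uller factors cancel. The only subtle point is that the quotient of two inverse limits equals the inverse limit of the quotients; this I would dispatch by invoking exactness of $\liminv$ on the category of compact Hausdorff abelian groups (Pontryagin dual to exactness of $\limdir$ on discrete abelian groups). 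The main obstacle, such as it is, is really this bookkeeping step -- verifying that $\calc_n\cap\calu_n=\calc_n^1$ and that norms preserve each factor of the Teichm\"uller splitting -- once those are in hand, the result is essentially formal.
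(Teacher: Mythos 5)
Your proof is correct, and it rests on the same central observation that the paper uses, namely that for $\delta\in\Delta$ one has $\delta(\omega_n)=\Phi_{\chi(\delta)}(\omega_n)=\chi(\delta)\omega_n u_n$ with $u_n\in\calu_n$, so that cyclotomic units hit all of $\F_\pr^*$ under reduction. Where your version differs is in the bookkeeping: you establish the isomorphism $\ol_n^*/\calc_n\simeq\calu_n/\calc_n^1$ levelwise via the Teichm\"uller splitting $\ol_n^*=\mu\times\calu_n$ (and the observation that the prime-to-$p$ torsion of $\calc_n$ is forced to be $\mu$ itself, since $\calu_n$ is pro-$p$), and then pass to the inverse limit using exactness of $\liminv$ on compact groups. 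The paper instead compares the two inverse limits directly in a single commutative diagram, showing both horizontal inclusions have cokernel $\F_\pr^*$; for the cyclotomic side it invokes the injectivity part of Proposition \ref{p:upsilon} to split $C_n=C_n^1\times\Upsilon(\Z[\Delta])$. Your route sidesteps that reference entirely: the identity $\Phi_a(x)=ax+O(x^q)$ together with uniqueness of the prime-to-$p$ splitting in a pro-$p$-by-finite group does the work, which is mildly more self-contained. One tiny notational quibble: $\chi$ takes values in $A_\pr^*$, so what sweeps out $\F_\pr^*$ is $\chi$ followed by reduction mod $\pr$, but this is exactly the isomorphism $\Delta\simeq\F_\pr^*$ recorded in the paper and your argument is implicitly using it correctly.
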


\begin{proof} Consider the commutative diagram
\[ \begin{CD} 1 @>>> \liminv\calc_n^1 @>>> \liminv\calu_n @>>> \liminv\calu_n/\liminv\calc_n^1 @>>> 1\\
& & @VV\alpha_1V @VV\alpha_2V @VV\alpha_3V \\
1 @>>> \liminv\calc_n @>>> \liminv\ol_n^* @>>> \liminv\ol_n^*/\liminv\calc_n @>>> 1\ .
\end{CD}\]
All vertical maps are injective and by \eqref{limK} the cokernel of $\alpha_2$ is $\F_\pr^*\,$. For $\delta\in\Delta$ one
has $\delta\omega_n=\Phi_{\chi(\delta)}(\omega_n)=\chi(\delta)\omega_nu_n$ for some $u_n\in\calu_n\,$. By the injectivity part of the
proof of Proposition \ref{p:upsilon}, $C_n=C_n^1\times\Upsilon(\Z[\Delta])$ and it follows that the cokernel of $\alpha_1$ is also $\F_\pr^*\,$.
\end{proof}

\subsection{Cyclotomic units and class groups} \label{s:cycunclssgrp} Let $F_n^+\subset F_n$ be the fixed field of the inertia group
$I_\infty(F_n/F)$. The extension $F_n^+/F$ is totally split at $\infty$ and ramified only above the prime $\pr$. We shall denote the
ring of $A$-integers of $F_n^+$ by $B_n^+$. Also, define $\cale_n$ and $\cale_n^1$ to be the closure respectively of $B_n^*\cap\ol_n^*$
and $B_n^*\cap\calu_n\,$.

We need to introduce a slight modification of the groups $\cala(L)$ of section \ref{s:classgroups}. For any finite extension $L/F$,
$\cala^{\infty}(L)$ will be the $p$-part of the class group of $A$-integers of $L$, so that, by class field theory,
$\cala^{\infty}(L)\simeq Gal(H(L)/L)$, where $H(L)$ is the maximal abelian unramified $p$-extension of $L$ which is totally split at
places dividing $\infty$. We shall use the shortening $\cala_n:=\cala^{\infty}(F_n^+)$.

Also, let $\mathcal{X}_n:=Gal(M(F_n^+)/F_n^+)$, where $M(L)$ is the maximal abelian $p$-extension of $L$ unramified outside $\pr$ and totally
split above $\infty$. As in the number field case, one has an exact sequence
\begin{equation} \label{e:seqiw}
\begin{CD} 1 @>>> \cale_n^1/\calc_n^1 @>>> \calu_n/\calc_n^1 @>>> \mathcal{X}_n @>>> \cala_n @>>> 1 \end{CD}
\end{equation}
coming from the following

\begin{prop}\label{prop6.12}
There is an isomorphism of Galois modules
$$\calu_n/\cale^1_n \simeq Gal(M(F_n^+)/H(F_n^+))\ .$$
\end{prop}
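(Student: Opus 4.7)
The plan is to apply global class field theory to the pair of extensions $H(F_n^+) \subset M(F_n^+)$ and translate the resulting quotient of idele class groups into local units modulo global units at the prime above $\pr$.

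First, I would note that $H(F_n^+) \subseteq M(F_n^+)$ is automatic: an abelian $p$-extension unramified everywhere is in particular unramified outside $\pr$, and the condition of being totally split above $\infty$ is shared. By the Artin reciprocity map, $Gal(M(F_n^+)/F_n^+)$ is (the pro-$p$ part of) the quotient of the idele class group of $F_n^+$ by the closure of $(F_n^+)^* \cdot \prod_{v\mid\infty} F^{+,*}_{n,v}\cdot \prod_{v\nmid \pr\infty}\ol_v^*$, while $Gal(H(F_n^+)/F_n^+)$ is the analogous quotient containing also $\ol_{\pr^+}^*$, where $\pr^+$ is the (unique, by Assumption \ref{assumpt}) prime of $F_n^+$ above $\pr$. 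Comparing the two, one obtains a surjection from $\ol_{\pr^+}^*$ onto $Gal(M(F_n^+)/H(F_n^+))$, whose kernel is the intersection of $\ol_{\pr^+}^*$ with the closure of the image of $(F_n^+)^*\cdot \prod_{v|\infty}F^{+,*}_{n,v}\cdot \prod_{v\nmid\pr\infty}\ol_v^*$ in the ideles.

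Next, I would identify this intersection with the closure of $B_n^{+,*}$ inside $\ol_{\pr^+}^*$: a local unit at $\pr^+$ that becomes trivial modulo the above subgroup is, up to the closure, the image of an element $x\in (F_n^+)^*$ which is a unit at every finite place $v\neq\pr$ and arbitrary at $v\mid\infty$, i.e.\ of an $A$-integer unit of $F_n^+$. Passing to pro-$p$ quotients one may drop the tame residue-field factor $\F_{\pr^+}^*\simeq\F_\pr^*$ (its order is prime to $p$), obtaining that $Gal(M(F_n^+)/H(F_n^+))$ is the 1-units of $\ol_{\pr^+}$ modulo the closure of the image of $B_n^{+,*}$.

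Finally, I would compare the $F_n^+$-side with the $F_n$-side. The local extension $K_n/F^+_{n,\pr^+}$ is totally ramified of degree $q-1$, realized by the inertia subgroup $\F_q^*=Gal(F_n/F_n^+)$; since $\gcd(q-1,p)=1$, taking $\F_q^*$-invariants (equivalently, coinvariants) commutes with $p$-adic completions and is exact on pro-$p$ modules. Hence the 1-units of $\ol_{\pr^+}$ are canonically the $\F_q^*$-invariants of $\calu_n$, and global units of $B_n^+$ sit inside global units of $B_n$, so that the closure of $B_n^{+,*}\cap \calu_n$ matches $\cale_n^1$ after this identification. This yields the claimed isomorphism, Galois-equivariantly for the $Gal(F_n^+/F)$-action on both sides.

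The main obstacle will be bookkeeping with the topological closures (ensuring the intersection $\ol_{\pr^+}^*\cap\overline{\cdots}$ computed in the ideles really equals the closure of $B_n^{+,*}\cap\ol_{\pr^+}^*$ inside $\ol_{\pr^+}^*$), together with the mildly delicate descent from $\calu_n$ to the 1-units of $F^+_{n,\pr^+}$; the key simplification throughout is that $q-1$ is coprime to $p$ and $\pr$ has a unique prime above it in every layer, both of which are consequences of the setup of Section \ref{CycNot} and Assumption \ref{assumpt}.
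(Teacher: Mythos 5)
Your argument is the same class-field-theoretic route taken in the paper: Artin reciprocity produces a surjection from the local units at the prime above $\pr$ in $F_n^+$ onto $Gal(M(F_n^+)/H(F_n^+))$, the kernel is (the closure of the diagonal image of) $(B_n^+)^*$, and one then passes to pro-$p$ parts so that the prime-to-$p$ residue factor $\F_\pr^*$ drops out. The paper compresses these steps into an appeal to Washington's Lemma 13.5, while you spell them out; you also make explicit the passage between $F_n^+$-objects and $F_n$-objects via $\F_q^*$-invariants, a step the paper leaves unstated.

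The final sentence of your write-up, however, asserts more than the preceding steps deliver. What your argument correctly produces is an isomorphism of $Gal(M(F_n^+)/H(F_n^+))$ with $\calu_n^{\F_q^*}/(\cale_n^1)^{\F_q^*}$, where $\calu_n^{\F_q^*}$ is exactly the group of $1$-units of $\ol_{\pr^+}$ and $(\cale_n^1)^{\F_q^*}=\overline{B_n^{+,*}\cap\calu_n^+}$ (these identifications hold because $q-1$ is prime to $p$, so taking $\F_q^*$-invariants is exact on pro-$p$ modules). But in the characteristic-$p$ local field $K_n$ the $1$-units $\calu_n$ form a $\Z_p$-module of countably infinite rank whose non-trivial $\F_q^*$-isotypic components are non-zero, so $\calu_n^{\F_q^*}=\calu_n^+$ is a proper closed subgroup of $\calu_n$; and $\cale_n^1$ is by definition $\overline{B_n^*\cap\calu_n}$, involving the units of $B_n$, not of $B_n^+$. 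Hence the group you have identified with $Gal(M(F_n^+)/H(F_n^+))$ is $(\calu_n/\cale_n^1)^{\F_q^*}$, and the bridge from this to the full quotient $\calu_n/\cale_n^1$, which is what Proposition~\ref{prop6.12} claims, is neither supplied nor obvious. (In fairness, the paper's own one-line appeal to Washington Lemma 13.5 is equally silent on this point; your proof is if anything more explicit about where the descent issue lives, but it does not close it.)
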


\begin{proof} This is a consequence of class field theory in characteristic $p>0$, as the analogous statement in the number field case: just
recall that the role of archimedean places is now played by the valuations above $\infty$. Under the class field theoretic identification of
idele classes $\idl_{F_n^+}/(F_n^+)^*$ with a dense subgroup of $Gal((F_n^+)^{ab}/F_n^+)$, one finds a surjection
\[ \prod_{\mathfrak P|\pr}\ol_{\mathfrak P}^* \sri Gal(M(F_n^+)/H(F_n^+)) \]
whose kernel contains the closure of
$$\prod_{\mathfrak P|\pr}\ol_{\mathfrak P}^*\cap (F_n^+)^*\prod_{w\nmid\pr}\ol_w^*\prod_{w|\infty}(F_{n,w}^+)^*=\iota_\pr((B_n^+)^*)$$
(where $\iota_\pr$ denotes the diagonal inclusion). Reasoning as in \cite[Lemma 13.5]{wash} one proves the proposition.
\end{proof}

Taking the projective limit of the sequence \eqref{e:seqiw}, we get
\begin{equation} \label{e:seqiwlim}
\begin{CD} 1 @>>> \cale_\infty^1/\calc_\infty^1 @>>> \calu_\infty/\calc_\infty^1 @>>> \mathcal{X}_\infty @>>> \cala_\infty @>>> 1\,.\end{CD}
\end{equation}

\begin{lem} The sequence \eqref{e:seqiwlim} is exact. \end{lem}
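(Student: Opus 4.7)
The plan is to derive exactness of the limit sequence from the exactness at finite level \eqref{e:seqiw} by invoking the vanishing of $\liminv^1$ on the relevant inverse systems. The key input is that every term in \eqref{e:seqiw} is a compact Hausdorff topological $\Z_p$-module: $\calu_n$ is the pro-$p$ group of $1$-units in the local ring $\ol_n$, the groups $\calc_n^1$ and $\cale_n^1$ are closed by definition (they were introduced as topological closures), and $\mathcal{X}_n$, $\cala_n$ are Galois groups of pro-$p$ extensions and hence profinite. The norm/transition maps are continuous, so the inverse systems sit in the category of compact Hausdorff abelian groups.

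First I would split the four-term sequence \eqref{e:seqiw} into two short exact sequences by factoring through the image $\calu_n/\cale_n^1$ of the middle arrow (which, by Proposition \ref{prop6.12}, is identified with $Gal(M(F_n^+)/H(F_n^+))$):
\begin{equation*}
1 \to \cale_n^1/\calc_n^1 \to \calu_n/\calc_n^1 \to \calu_n/\cale_n^1 \to 1,
\end{equation*}
\begin{equation*}
1 \to \calu_n/\cale_n^1 \to \mathcal{X}_n \to \cala_n \to 1.
\end{equation*}
Both are short exact sequences in the category just described, and the transition maps (coming from norm maps on units and from restriction on Galois groups) are manifestly compatible.

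Next I would apply $\liminv$ to each short exact sequence. For countable inverse systems of compact Hausdorff abelian groups, the Mittag-Leffler condition holds automatically (continuous images of compact sets are compact, hence closed, so the images of the transition maps form a decreasing family of closed subgroups whose intersections stabilise in the required sense), and consequently $\liminv^1$ vanishes. Therefore $\liminv$ is exact on both sequences, and splicing them back together yields the asserted exactness of \eqref{e:seqiwlim}.

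The only technical point to verify, which I expect to be the main (but minor) obstacle, is the identification of $\liminv$ of each quotient in \eqref{e:seqiw} with the corresponding quotient of limits used in the notation $\cale_\infty^1/\calc_\infty^1$, $\calu_\infty/\calc_\infty^1$, etc. This is precisely the content of the vanishing of $\liminv^1$ applied to the defining short exact sequences $1 \to \calc_n^1 \to \cale_n^1 \to \cale_n^1/\calc_n^1 \to 1$ and $1 \to \calc_n^1 \to \calu_n \to \calu_n/\calc_n^1 \to 1$, which again follows from compactness. Once this is in place, exactness of \eqref{e:seqiwlim} is immediate.
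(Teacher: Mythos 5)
Your strategy is close in spirit to the paper's: both reduce exactness of the limit sequence to the vanishing of ${\liminv}^1$ on suitable inverse systems. The paper proves ${\liminv}^1\cale_n^1=1$ via a Mittag--Leffler argument exploiting the $\Lambda_n$-module structure of $\cale_n^1$, whereas you invoke the general vanishing of ${\liminv}^1$ for countable inverse systems of compact Hausdorff abelian groups. Your version is more uniform and also makes explicit the identification of $\liminv$ of quotients with quotients of limits (e.g.\ $\liminv(\cale_n^1/\calc_n^1)\simeq\cale_\infty^1/\calc_\infty^1$), a point the paper leaves implicit when passing from \eqref{e:seqiw} to \eqref{e:seqiwlim}.

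There is, however, an error in your justification of the key vanishing. You claim compactness forces Mittag--Leffler because ``the images of the transition maps form a decreasing family of closed subgroups whose intersections stabilise in the required sense.'' This is false: a decreasing chain of compact subgroups need not stabilize. For the inverse system $\Z_p\xrightarrow{\,p\,}\Z_p\xrightarrow{\,p\,}\cdots$, the image of $A_{n+k}\to A_n$ is $p^k\Z_p$, a strictly decreasing chain, so Mittag--Leffler genuinely fails; yet ${\liminv}^1$ still vanishes. The correct statement---which you should invoke instead---is that ${\liminv}^1$ vanishes for any countable inverse system of compact Hausdorff abelian groups with continuous transition maps, proved directly by compactness: given data $(b_n)$ to be lifted, the sets of partial solutions to the equations $a_n-d(a_{n+1})=b_n$ (for $n<N$, as $N$ grows) form a nested family of nonempty closed subsets of the compact space $\prod A_n$, hence have nonempty intersection by Tychonoff and the finite intersection property. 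Replacing the Mittag--Leffler appeal with this direct argument makes your proof complete and correct.
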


\begin{proof} Taking the projective limit of the short exact sequence
\[ \begin{CD} 1 @>>> \cale^1_n @>>> \calu_n @>>> Gal(M(F_n^+)/H(F_n^+)) @>>> 1 \end{CD} \]
we obtain
\[ \begin{CD} 1 @>>> \cale^1_\infty @>>> \calu_\infty @>>> Gal(M(\calf^+)/H(\calf^+)) @>>> {\liminv}^1\cale_n^1 \ , \end{CD} \]
where $M(\calf^+)$ and $H(\calf^+)$ are the maximal abelian $p$-extensions of $\calf^+$ totally split above $\infty$ and unramified
respectively outside the place above $\pr$ and everywhere.

To prove the lemma it is enough to show that ${\liminv}^1(\cale^1_n)=1$. By a well-known result in homological algebra, the functor
${\liminv}^1$ is trivial on projective systems satisfying the Mittag-Leffler condition. We recall that an inverse system $(B_n,d_n)$ enjoys
such property if for any $n$ the images of the transition maps $B_{n+m}\rightarrow B_n$ are the same for large $m$. So we are reduced to check
that this holds for the $\cale_n^1$'s with the norm maps.

Observe first that $\cale_n^1$ is a finitely generated $\Lambda_n$-module, thus noetherian because so is $\Lambda_n$. Consider
now $\cap_kImage(N_{n+k,n})$, where $N_{n+k,n}\colon\cale_{n+k}^1\rightarrow\cale^1_n$ is the norm map. This intersection is a
$\Lambda_n$-submodule of $\cale^1_n$, non-trivial because it contains the cyclotomic units. By noetherianity it is finitely generated,
hence there exists $l$ such that $Image(N_{n+k,n})$ is the same for $k\geqslant l$. Therefore $(\cale_n^1)$ satisfies the Mittag-Leffler property.
\end{proof}

The exact sequence \eqref{e:seqiwlim} lies at the heart of Iwasawa theory. Its terms are all $\Lambda$-modules and, in section
\ref{s:charideal}, we have shown how to associate a characteristic ideal to $\cala_\infty$ and its close relation with Stickelberger
elements. In a similar way, i.e., working on $\Z_p^d$-subextensions, one might approach a description of $\calx_\infty\,$, while, for the
first two terms of the sequence, the filtration of the $F_n^+$'s seems more natural (as the previous sections show).

Assume for example that the class number of $F$ is prime with $p$, then it is easy to see that $\cala_n=1$ for all $n$. Moreover, using
the fact that, by a theorem of Galovich and Rosen, the index of the cyclotomic units is equal to the class number (see
\cite[Theorem 16.12]{rosen}), one can prove that $\cale_n^1/\calc_n^1=1$ as well. These provide isomorphisms
\[ \calu_n/\calc_n^1\simeq\mathcal{X}_n \]
and
\[ \calu_\infty/\calc_\infty^1\simeq\mathcal{X}_\infty \ .\]

In general one expects a relation (at least at the level of $\Z_p^d$-subextensions, then a limit procedure should apply) between the
pro-characteristic ideal of $\cala_\infty$ and the (yet to be defined) analogous ideal for $\cale_\infty^1/\calc_\infty^1$ (the Stickelberger
element might be a first hint for the study of this relation). Consequently (because of the multiplicativity of characteristic ideals) an
equality of (yet to be defined) characteristic ideals of $\calx_\infty$ and of $\calu_\infty/\calc_\infty^1$ is expected as well. Any of
those two equalities can be considered as an instance of Iwasawa Main Conjecture for the setting we are working in.

\subsection{Bernoulli-Carlitz numbers} \label{s:berncarl} We go back to the subject of characteristic $p$ $L$-function. Let
$A^+\subset A$ be the subset of monic polynomials. The Carlitz zeta function \index{Carlitz-Goss zeta function} is defined
$$\zeta_A(k):=\sum_{a\in A^+}\frac{1}{a^k}$$
for $k\in\N$.

Recall that the Carlitz module corresponds to a lattice $\xi A\subset\dc$ and can be constructed via the Carlitz exponential
$e_C(z):=z\prod_{a\in A'}(1-z\xi^{-1}a^{-1})$ (where $A'$ denotes $A-\{0\}$). Rearranging summands in the equality
$$\frac{1}{e_C(z)}=\dlog(e_C(z))=\sum_{a\in A}\frac{1}{z-\xi a}=\frac{1}{z}-\sum_{a\in A'}\sum_{k=1}^\infty\frac{z^{k-1}}{(\xi a)^k}$$
(and using $A'=\F_q^*\times A^+$) one gets the well-known formula
\begin{equation} \label{zetacarlitz} \frac{1}{e_C(z)}=\frac{1}{z}+\sum_{n=1}^\infty\frac{\zeta_A(n(q-1))}{\xi^{n(q-1)}}z^{n(q-1)-1}\ . \end{equation}

From section \ref{s:cycunit} it follows that for any $a,b\in A-\pr$, the function $\frac{\Phi_{a}(x)}{\Phi_{b}(x)}$ is an $\caln$-invariant power
series, associated with
\begin{equation} \label{colcycunit} c(a,b):=\frac{\Phi_{a}(\omega)}{\Phi_{b}(\omega)}=
\left(\frac{\Phi_{a}(\omega_n)}{\Phi_{b}(\omega_n)}\right)_n\in\liminv\mathcal{O}_n^*\ .\end{equation}

\begin{thm} \label{main} The $k$th Coates-Wiles homomorphism applied to $c(a,b)$ is equal to:
\[ \delta_k(c(a,b))=\left\{ \begin{array}{ll}
0 & {\rm if}\ k\not\equiv 0 \pmod{q-1} \\
(a^k-b^k)\frac{\zeta_A(k)}{\xi^k} & {\rm if}\ k\equiv 0 \pmod{q-1}
\end{array} \right. \ .\]
\end{thm}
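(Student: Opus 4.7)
The plan is a direct computation, using Coleman's identification of $c(a,b)$ with the explicit power series $\Phi_a(x)/\Phi_b(x)$ and the expansion \eqref{zetacarlitz} of $1/e_C$ in terms of Carlitz zeta values.

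First, I would identify the Coleman power series. By Remark \ref{Phiacol}, for any $a \in A_\pr^*$ the series $\Phi_a(x) \in A_\pr[[x]]$ is $\caln$-invariant, and the evaluation map $ev$ sends it to $(\Phi_a(\omega_n))_n$. Hence, by the uniqueness in Theorem \ref{teo8},
\[
Col_{c(a,b)}(x) = \frac{\Phi_a(x)}{\Phi_b(x)}.
\]
(The hypothesis $a,b \notin \pr$ ensures that $\Phi_a(\omega_n)$ and $\Phi_b(\omega_n)$ are units in $\ol_n$, so $c(a,b) \in \liminv \ol_n^*$.)

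Next I would compute the logarithmic derivative. Since $\dlog$ turns quotients into differences,
\[
\dlog Col_{c(a,b)} = \frac{\Phi_a'(x)}{\Phi_a(x)} - \frac{\Phi_b'(x)}{\Phi_b(x)}.
\]
The key observation is that $\Phi_a(x) = ax + \sum_{i \geq 1} (\text{coeff}) \, x^{q^i}$, and in characteristic $p$ the derivative of $x^{q^i}$ vanishes for $i \geq 1$; thus $\Phi_a'(x) = a$ and similarly $\Phi_b'(x) = b$. Substituting $x \mapsto e_C(x)$ and invoking the intertwining relation $\Phi_c(e_C(x)) = e_C(cx)$ (true for $c \in A_\pr$, e.g. by uniqueness of the Carlitz exponential), we obtain
\[
(\dlog Col_{c(a,b)})(e_C(x)) = \frac{a}{e_C(ax)} - \frac{b}{e_C(bx)}.
\]

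Finally, I would insert the Carlitz expansion \eqref{zetacarlitz}. Substituting $z = ax$ and multiplying by $a$ gives
\[
\frac{a}{e_C(ax)} = \frac{1}{x} + \sum_{n \geq 1} \frac{\zeta_A(n(q-1))}{\xi^{n(q-1)}}\, a^{n(q-1)}\, x^{n(q-1)-1},
\]
and similarly for $b$. Subtracting, the polar terms cancel and we get
\[
(\dlog Col_{c(a,b)})(e_C(x)) = \sum_{n \geq 1} \frac{\zeta_A(n(q-1))}{\xi^{n(q-1)}} \bigl(a^{n(q-1)} - b^{n(q-1)}\bigr)\, x^{n(q-1)-1}.
\]
Comparing with the defining expansion \eqref{eq:deltaexp}, I read off that $\delta_k(c(a,b)) = 0$ unless $k = n(q-1)$ for some $n \geq 1$, in which case $\delta_k(c(a,b)) = (a^k - b^k)\zeta_A(k)/\xi^k$, as claimed.

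There is no serious obstacle: the whole argument is an explicit coefficient extraction. The only subtleties are the two characteristic-$p$ simplifications ($e_C'(z) = 1$ and $\Phi_a'(x) = a$), which collapse what would otherwise be a messier calculation, and the need to justify the formal manipulations ($\dlog$ of a ratio, composition with $e_C$) inside the topological ring $F_\pr((x))_1$ — but these are standard given the framework already set up in the preceding subsections.
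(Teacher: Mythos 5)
Your proposal is correct and follows essentially the same route as the paper: identify $Col_{c(a,b)}$ with $\Phi_a(x)/\Phi_b(x)$, use $\Phi_a'=a$ to get $\dlog Col_{c(a,b)}=\frac{a}{\Phi_a(x)}-\frac{b}{\Phi_b(x)}$, compose with $e_C$ via the intertwining $\Phi_a\circ e_C=e_C(a\,\cdot)$, and then read off coefficients from the Carlitz expansion \eqref{zetacarlitz}. The only cosmetic difference is that the paper introduces the Carlitz logarithm $\lambda$ and expands in powers of $\lambda(x)$ before composing with $e_C$, whereas you compose with $e_C$ first and expand directly in $x$ -- same computation, slightly different order.
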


We remark that the condition $k=n(q-1)$ here is the analogue of $k$ being an even integer in the classical setting (since $q-1=|\F_q^*|$
just as $2=|\Z^*|$).

\begin{proof} Observe that \eqref{colcycunit} amounts to giving the Coleman power series $Col_{c(a,b)}$.\\
Let $\lambda$ be the Carlitz logarithm, i.e., $\lambda\in F\{\{\tau\}\}$ is the element uniquely determined by $e_C\circ\lambda=1$.
Then $\Phi_a(x)=e_C(a\lambda(x))$ and by \eqref{colcycunit} and \eqref{zetacarlitz} one gets
\[ \begin{array}{ll} \dlog Col_{c(a,b)}(x) & =\frac{a}{\Phi_a(x)}-\frac{b}{\Phi_b(x)} \\
\ & = \sum_{n\geqslant 1}(a^{n(q-1)}-b^{n(q-1)})\frac{\zeta_A(n(q-1))}{\xi^{n(q-1)}}\lambda(x)^{n(q-1)-1}\ . \end{array}\]
Since $\lambda(e_C(x))=x$, we get
\begin{equation} \label{eq:cwcycl} (\dlog Col_{c(a,b)})(e_C(x))=
\sum_{n\geqslant 1}(a^{n(q-1)}-b^{n(q-1)})\frac{\zeta_A(n(q-1))}{\xi^{n(q-1)}}x^{n(q-1)-1}\end{equation}
and the theorem follows comparing \eqref{eq:cwcycl} with \eqref{eq:deltaexp}.
\end{proof}

\begin{rem} As already known to Carlitz, $\zeta_A(k)\xi^{-k}$ is in $F$ when $q-1$ divides $k$. Note that by a theorem of Wade,
$\xi\in F_\infty$ is transcendental over $F$. Furthermore, Jing Yu \cite{jyu} proved that $\zeta_A(k)$ for all $k\in\N$ and
$\zeta_A(k)\xi^{-k}$ for $k$ ``odd'' (i.e., not divisible by $q-1$) are transcendental over $F$. \end{rem}

Theorem \ref{main} can be restated in terms of the Bernoulli-Carlitz numbers $BC_k$ \cite[Definition 9.2.1]{goss}. They can be defined by
$$\frac{1}{e_C(z)}=\sum_{n\geqslant 0}\frac{BC_n}{\Pi(n)}z^{n-1}$$ \index{Bernoulli-Carlitz numbers}
(where $\Pi(n)$ is a function field analogue of the classical factorial $n!$); in particular $BC_n=0$ when $n\not\equiv 0 \pmod{q-1}$.
Then Theorem \ref{main} becomes
\begin{equation} \label{e:BC} \delta_k(c(a,b))=(a^k-b^k)\frac{BC_k}{\Pi(k)}\ .\end{equation}

Theorem \ref{main} and formula \eqref{e:BC} can be seen as extending a result by Okada, who in \cite{Okada} obtained
the ratios $\frac{BC_k}{\Pi(k)}$ (for
$k=1,\dots,q^{\deg(\pr)}-2$) as images of cyclotomic units under the Kummer homomorphisms (which are essentially a less refined version of
the Coates-Wiles homomorphisms). From here one proves that the non-triviality of an isotypic component of $\cala_1$ implies the divisibility
of the corresponding ``even'' Bernoulli-Carlitz number by $\pr$: we refer to \cite[\S8.20]{goss} for an account. As already mentioned,
Shu \cite{shu} generalized Okada's work to any $F$ (but with the assumption $\deg(\infty)=1$): it might be interesting to extend Theorem
\ref{main} to a ``Coates-Wiles homomorphism'' version of her results.

\subsection{Interpolation?} In the classical setting of cyclotomic number fields, the analogue of the formula in Theorem \ref{main} can be used
as a key step in the construction of the Kubota-Leopoldt zeta function (see e.g. \cite{cosu}). Hence it is natural to wonder if something like
it holds in our function field case. For now we have no answer and can only offer some vague speculation.

As mentioned in section \ref{s:gosszeta}, Goss found a way to extend the domain of $\zeta_A$ from $\N$ to $S_\infty$. \index{Carlitz-Goss
zeta function} He also considered the analogue of the $p$-adic domain and defined it to be $\cp^*\times S_\pr$, with
$S_\pr:=\Z_p\times\Z/(q^{\deg(\pr)}-1)$ (observe that $\cp^*\times S_\pr$ is the $\cp$-valued dual of $F_\pr^*$). Then functions like
$\zeta_A$ enjoy also a $\pr$-adic life: for example, letting $\pi_v\in A^+$ be a uniformizer for a place $v$, $\zeta_{A,\pr}$ is defined
on $\cp^*\times S_\pr$ by
$$\zeta_{A,\pr}(s):=\prod_{v\nmid\pr\infty}(1-\pi_v^{-s})^{-1}\ ,$$
at least where the product converges.

The ring $\Z$ embeds discretely in $S_\infty$ and has dense image in $1\times S_\pr$. So Theorem \ref{main} seems to suggest interpolation
of $\zeta_{A,\pr}$ on $1\times S_\pr$. Another clue in this direction is the fact that $S_\pr$ is the ``dual'' of $\Gamma$, just as $\Z_p$
is the ``dual'' of $Gal(\Q(\zeta_{p^\infty})/\Q)$ (a strengthening of this interpretation has been recently provided by the main result
of \cite{jeong}).\bigskip

\noindent {\bf Acknowledgments.} We thank Sangtae Jeong, King Fai Lai, Jing Long Hoelscher, Ki-Seng Tan, Dinesh Thakur, Fabien Trihan for
useful conversations and Bruno Angl\`es for pointing out a mistake in the first version of this paper. F. Bars and I. Longhi thank CRM
for providing a nice environment to complete work on this paper.

\frenchspacing


\vspace{.5truecm}
\noindent Andrea Bandini\\
Dipartimento di Matematica, Universit\`a degli Studi di Parma\\
Parco Area delle Scienze, 53/A - 43124 Parma (PR), Italy\\
andrea.bandini@unipr.it\\

\noindent Francesc Bars\\
Departament Matem\`atiques, Edif. C, Universitat Aut\`onoma de Barcelona\\
08193 Bellaterra (Barcelona), Catalonia\\
francesc@mat.uab.cat\\

\noindent Ignazio Longhi\\
Department of Mathematical Sciences, Xi'an Jiaotong-Liverpool
University\\ 111 Ren Ai Road, Dushu Lake Higher Education Town
Suzhou Industrial Park, Suzhou, Jiangsu 215123, China\\
Ignazio.Longhi@xjtlu.edu.cn\\
\end{document}